\def\frk{\frak}               % font for "Fraktur"
\def\pp{{\frk p}}
\def\Pp{{\frk P}}
\def\qq{{\frk q}}
\def\mm{{\frk m}}
\def\Phi{{\frk n}}
\def\Phi{{\frk N}}
\def\opn#1#2{\def#1{\operatorname{#2}}} % to make operators
\opn\chara{char} \opn\length{\ell} \opn\pd{pd} \opn\rk{rk}
\opn\projdim{proj\,dim} \opn\injdim{inj\,dim} \opn\rank{rank}
\opn\depth{depth} \opn\sdepth{sdepth} \opn\fdepth{fdepth}
\opn\grade{grade} \opn\height{height} \opn\embdim{emb\,dim}
\opn\codim{codim}  \opn\min{min} \opn\max{max}
\opn\Tr{Tr} \opn\bigrank{big\,rank}
\opn\superheight{superheight}\opn\lcm{lcm}
\opn\trdeg{tr\,deg}%\emph{
\opn\reg{reg} \opn\lreg{lreg} \opn\ini{in} \opn\lpd{lpd}
\opn\size{size}
\opn\div{div} \opn\Div{Div} \opn\cl{cl} \opn\Cl{Cl}
\opn\Spec{Spec} \opn\Supp{Supp} \opn\supp{supp} \opn\Sing{Sing}
\opn\Ass{Ass} \opn\Min{Min}
\opn\Ann{Ann} \opn\Rad{Rad} \opn\Soc{Soc}
\opn\Im{Im} \opn\Ker{Ker} \opn\Coker{Coker} \opn\Am{Am}
\opn\Hom{Hom} \opn\Tor{Tor} \opn\Ext{Ext} \opn\End{End}
\opn\Aut{Aut} \opn\id{id}  \opn\deg{deg}
\opn\nat{nat}
\opn\pff{pf}%   \pf exists already
\opn\Pf{Pf} \opn\GL{GL} \opn\SL{SL} \opn\mod{mod} \opn\ord{ord}
\opn\Gin{Gin} \opn\Hilb{Hilb}
\opn\aff{aff} \opn\con{conv} \opn\relint{relint} \opn\st{st}
\opn\lk{lk} \opn\cn{cn} \opn\core{core} \opn\vol{vol}
\opn\link{link} \opn\star{star}
\opn\gr{gr}
\def\pot#1#2{#1[\kern-0.28ex[#2]\kern-0.28ex]}
\opn\dirlim{\underrightarrow{\lim}}
\opn\inivlim{\underleftarrow{\lim}}
\let\tensor=\otimes
\let\to=\rightarrow
\def\Implies{\ifmmode\Longrightarrow \else
        \unskip${}\Longrightarrow{}$\ignorespaces\fi}
\def\implies{\ifmmode\Rightarrow \else
        \unskip${}\Rightarrow{}$\ignorespaces\fi}
\def\iff{\ifmmode\Longleftrightarrow \else
        \unskip${}\Longleftrightarrow{}$\ignorespaces\fi}
\newtheorem{Theorem}{Theorem}[]
\newtheorem{Lemma}[Theorem]{Lemma}
\newtheorem{Corollary}[Theorem]{Corollary}
\newtheorem{Proposition}[Theorem]{Proposition}
\theoremstyle{definition}
\newtheorem{Remark}[Theorem]{Remark}
\newtheoremstyle{subsection-tweak}
   {11pt}
   {3pt}%
   {}
   {}%
   {\bfseries}
   {}%
   {.5em}
   {\thmnumber{\@{#1}{}\@{#2}.}%
    \thmnote{~{\bfseries#3.}}}    
\newcounter{numberingbase}
\theoremstyle{subsection-tweak}
\newtheorem{bpp}[Theorem]{}
\newtheorem{bppt}[numberingbase]{}
\newcommand{\bbpp}{\begin{bpp}}
\newcommand{\eepp}{\end{bpp}}
\newcommand{\bbppt}{\begin{bppt}}
\newcommand{\eeppt}{\end{bppt}}
\theoremstyle{theorem}
\newtheorem{TheoremT}[numberingbase]{Theorem}
\newtheorem{LemmaT}[numberingbase]{Lemma}
\newtheorem{PropositionT}[numberingbase]{Proposition}
\newtheorem{variantT}[numberingbase]{Variant}
\theoremstyle{definition}
\newtheorem{ExampleT}[numberingbase]{Example}
\newcommand{\val}{\mathrm{val}}		% Valuation
\providecommand{\qxq}[1]{\quad\text{#1}\quad}
\providecommand{\qx}[1]{\quad\text{#1}}
\newcommand{\tst}{\textstyle}
\newcommand{\sU}{\mathscr{U}}
\newcommand{\msk}{\medskip}
\newcommand{\bsk}{\bigskip}
\DeclareMathOperator{\Frac}{Frac}		% Field of fractions
\newcommand{\wt}{\widetilde}
\newcommand{\ssk}{\smallskip}
\providecommand{\up}[1]{{\upshape(}#1{\upshape)}}
\providecommand{\ucolon}{{\upshape:} }
\newcommand{\surjects}{\twoheadrightarrow}
\newcommand{\hra}{\hookrightarrow}
\let\epsilon\varepsilon
\let\phi=\varphi
\def\qed{\ifhmode\textqed\fi
      \ifmmode\ifinner\quad\qedsymbol\else\dispqed\fi\fi}
\def\textqed{\unskip\nobreak\penalty50
       \hskip2em\hbox{}\nobreak\hfil\qedsymbol
       \parfillskip=0pt \finalhyphendemerits=0}
\def\dispqed{\rlap{\qquad\qedsymbol}}
\opn\dis{dis}
\def\pnt{{\raise0.5mm\hbox{\large\bf.}}}
\opn\Lex{Lex}
\begin{document}

\title{N\'{e}ron desingularization of extensions of valuation rings
 \\ {\tiny with an Appendix by K\k{e}stutis \v{C}esnavi\v{c}ius}}

\author{Dorin Popescu}
\address{Simion Stoilow Institute of Mathematics of the Romanian Academy, Research unit 5, University of Bucharest, P.O. Box 1-764, Bucharest 014700, Romania}

\begin{abstract}
Zariski's local uniformization, a weak form of resolution of singularities, implies that every valuation ring containing  $\bf Q$ is a filtered direct limit of smooth $\bf Q$-algebras. Given an immediate extension of valuation rings $V\subset V'$ containing $\bf Q$ we show  that $V'$ is a filtered direct limit of smooth $V$-algebras. This corrects a paper of us \cite{Po1} where we thought  that we may reduce to the case when the value groups are finitely generated. For this correction we use   an infinite tower of ultrapowers construction that rests on results from model theory. 
\end{abstract}

\maketitle

\section{A version of local uniformization}

Zariski  proved in characteristic $0$ in \cite{Zar}, that any integral algebraic variety $X$ equipped with a dominant morphism $v  : \Spec(V) \to X$ from a valuation ring $V$ can be ``desingularized along $V$'': there  exists a proper birational map $\tilde{X} \to X$ for which the lift $\tilde{v} : \Spec(V) \to \tilde{X}$ of $v$ supplied by the valuative criterion of properness  factors through the regular locus of $\tilde{X}$. This implies the following theorem.

\begin{Theorem}(Zariski) \label{z} Every valuation ring $V$ containing a field $K$ of characteristic zero is a filtered direct limit of smooth $K$-subalgebras of $V$(in particular they are regular rings).
\end{Theorem}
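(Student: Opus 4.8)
The plan is to deduce this algebraic statement from Zariski's geometric local uniformization recalled just above, by the standard ``spreading out'' argument. First I would write $V$ as the filtered union of its finitely generated $K$-subalgebras $A$: this family is directed (given $A_1,A_2$, the subalgebra they generate is again finitely generated and contains both) and its colimit is $V$ (each $v\in V$ lies in $K[v]$). Hence it suffices to show that every such $A$ is contained in some finitely generated $K$-subalgebra of $V$ that is smooth over $K$: the smooth finitely generated $K$-subalgebras of $V$ will then be cofinal in the previous family, hence again directed with colimit $V$, and smoothness over a field forces regularity, which gives the parenthetical assertion.

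So fix a finitely generated $K$-subalgebra $A\subseteq V$ and put $X=\Spec A$. Since $A$ is a domain and $A\hra V\hra \Frac(V)$, the morphism $v\colon \Spec V\to X$ induced by the inclusion sends the generic point to the generic point, i.e.\ is dominant. Apply Zariski's theorem: there is a proper birational morphism $\pi\colon \tilde X\to X$ such that the lift $\tilde v\colon \Spec V\to\tilde X$ supplied by the valuative criterion of properness (using birationality of $\pi$ to lift the generic point to $\tilde X$, and properness to extend over the closed point of $\Spec V$) factors through the regular locus $\tilde X_{\mathrm{reg}}$. Let $x\in\tilde X_{\mathrm{reg}}$ be the image of the closed point; then $\tilde v$ amounts to a local homomorphism $\OO_{\tilde X,x}\to V$. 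As $\tilde X$ is of finite type over $X$, hence over $K$, it is excellent, so $\tilde X_{\mathrm{reg}}$ is open; choose an affine open $U=\Spec B\subseteq\tilde X_{\mathrm{reg}}$ with $x\in U$, so that $B$ is a regular domain of finite type over $K$.

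Now I record three points. Since $X$ is affine, the morphism $U\to X$ corresponds to a ring homomorphism $A\to B$, compatible with $A\to\OO_{\tilde X,x}\to V$. Since $\pi$ is birational, $\Frac(B)=\Frac(A)$, which embeds in $\Frac(V)$, and the composite $B\to\OO_{\tilde X,x}=B_\mathfrak{p}\to V$ is the restriction of this embedding, so $B\to V$ is injective; and $A\to B\to V$ is the original inclusion. Thus, identifying $B$ with its image, $A\subseteq B\subseteq V$ with $B$ a finitely generated $K$-subalgebra of $V$. Finally, because $\chara K=0$ the field $K$ is perfect, so the regular finite-type $K$-algebra $B$ is geometrically regular over $K$, i.e.\ $B$ is smooth over $K$ (in particular regular). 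This yields the desired smooth $K$-subalgebra of $V$ containing $A$, completing the proof.

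I expect the only genuine subtlety to be this spreading-out step: turning the local ring $\OO_{\tilde X,x}$ that local uniformization hands us into an honest finitely generated, smooth $K$-subalgebra of $V$ still containing $A$ — concretely, producing the map $A\to B$ (via affineness of $X$), checking $B\hra V$, and arranging regularity on a whole affine open rather than merely at $x$ (via openness of the regular locus for excellent schemes). The implication ``regular $\Rightarrow$ smooth over $K$'' is precisely where characteristic zero enters, through perfectness of $K$.
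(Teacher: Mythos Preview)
Your argument is correct and is exactly the deduction the paper has in mind: the paper does not give a formal proof of Theorem~\ref{z} but simply states, after recalling Zariski's geometric local uniformization, that ``this implies the following theorem.'' Your proposal fills in precisely this implication---write $V$ as the filtered union of its finitely generated $K$-subalgebras, apply local uniformization to each $\Spec A$, and spread out the regular point on $\tilde X$ to a smooth affine neighborhood inside $V$---which is the standard and intended route. The paper later gives an independent argument for a variant (Theorem~\ref{z1}, where $K$ is the residue field) via the ultrapower machinery of the Appendix, but that is explicitly presented as an alternative, not as the proof of Theorem~\ref{z}.
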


A smooth algebra is here finitely presented.
A ring map $A \to A'$ is \emph{ind-smooth} if $A'$ is a filtered direct limit of smooth $A$-algebras. A filtered direct limit (in other words a filtered colimit) is a limit indexed by a small category that is filtered (see \cite[002V]{SP} or \cite[04AX]{SP}). A filtered  union is a filtered direct limit in which all objects are subobjects of the final colimit, so that in particular all the transition arrows are monomorphisms. The above theorem says that $K\to V$ is ind-smooth. Actually, Zariski proved  that $V$ is a filtered  union of smooth $K$-subalgebras of $V$.
One goal of this paper is to show a weaker statement:

\begin{Theorem} \label{m}
 Let $V\subset V'$ be an immediate extension of valuation rings containing $\bf Q$.  Then $V\subset V'$ is ind-smooth.
  If  $\dim V=\dim V'=1$ and the residue field extension of $V\subset V'$is trivial then $V\subset V'$ is ind-smooth if   and only if the value group extension of $V\subset V'$ is trivial. 
\end{Theorem}

The proof follows from Theorem \ref{T0}, Lemma \ref{l 5.1} and Proposition \ref{one}.  The above result was stated  by mistake  in \cite{Po1} in a more general case. A different proof of  Theorem \ref{z} is given in Theorem \ref{z1} and the method use some facts from model theory described below.

\bbpp[The method of proof]

To achieve the desingularization claimed in Theorem \ref{m}, we replace the initial $V$ by the limit $\tilde{V}$ of a certain countable tower of iterated ultrapowers of $V$, constructed in such a way that $\tilde{V}$ would, in turn, be an immediate extension of a filtered increasing union of valuation rings for which one knows local uniformization. To then conclude, we argue that large immediate extensions and ultrapowers interact well with desingularization.

The techniques we use   include extensions of steps of the General N\'{e}ron desingularization, notably, Lemma \ref{k} that is also key for reductions to complete rank $1$ cases. In the purely transcendental case, Kaplansky's classification \cite{Kap} of Ostrowski's pseudo-convergent sequences plays an important role. 

The utility of desingularizing immediate extensions is evident already in the case when $V'$ is complete of rank $1$ with a finitely generated value group $\Gamma'$. Such a $V'$ has a coefficient field $k$, so, by choosing a presentation $\Gamma' \cong {\bf Z} \val(x_1) \oplus \cdots \oplus {\bf Z}\val(x_n)$, one obtains the immediate extension $V' \cap k(x_1, \dotsc, x_n) \subset V'$. To show that $V=k\subset V'$ is ind-smooth, it  remains to observe that a local uniformization of $V' \cap k(x_1, \dotsc, x_n)$ may be constructed using Perron's algorithm in the style of Zariski.

The goal of the tower of ultrapowers argument given in the  Appendix is to overcome the obstacle that in general $\Gamma$ may not be finitely generated and there may not even be a group section $s : \Gamma \to K^*$ to $\val:K^*\to \Gamma$ (roughly, such an $s$ suffices).  Nevertheless, $s$ can always be arranged for any finitely generated submonoid of $\Gamma$, and the idea is to then use the following fact from model theory: for a system of equations whose finite subsystems have solutions in $V$, the entire system has a solution in a well-chosen ultrapower of $V$ (see the Appendix).

This fact, which rests on the Keisler--Kunen theorem about the existence of good ultrafilters, permits us to obtain $s$ at the expense of passing to an ultrapower. However, such a passage replaces $\Gamma$ by its corresponding ultrapower $\Gamma^*$ and, in order to extend $s$ to this $\Gamma^*$, we then need another ultrapower and some model-theoretic facts about algebraic compactness that ensure that $\Gamma \to \Gamma^*$ be a split injection. Even though the new ultrapower again enlarges the value group, by repeating the construction  countably many times, in the limit we find our final $s$ and can conclude.

We should mention that the proof of the main part of Theorem \ref{m} uses only Lemma \ref{k} and Corollary \ref{c0}. The last sentence from Theorem \ref{m} needs the method explained above together with some facts from Andr\'e homology. 
\eepp
\bsk

We owe thanks to K\k{e}stutis \v{C}esnavi\v{c}ius especially for the Appendix,  but also for many ideas and his  great help on the presentation of the paper. Also we owe thanks to the referees who pointed out several mistakes in a previous version of the paper especially in the former Proposition \ref{p}.

\vskip 0.5 cm
\section{A reduction to the case of complete valuation rings of rank $1$}

 We begin by reviewing the following class of generators of the singular ideal.

For a finitely presented ring map $A \to B$, an element $b \in B$ is \emph{standard over $A$} if there exists a presentation $B \cong A[X_1, \dotsc, X_m]/I$ and $f_1, \dots, f_r \in I$ with $r \leq m$ such that $b = b'b''$ with $b' = \det ({(\partial f_i/\partial X_{j}))_{1 \le i,\, j \le r}} \in A[X_1, \dotsc, X_m]$ and a $b'' \in A[X_1, \dotsc, X_m]$ that kills~$I/(f_1, \dotsc, f_r)$ (our standard element is a special power of the standard element from  \cite[Definition, page 9]{S} given in the particular case of the valuation rings). 
Any multiple of an element $b $  standard over $A$ is   standard over $A$. 
The definition is compatible with base change: more precisely, for any morphism $A \to A'$,  elements of $B$ standard over $A$ map to  elements of $B \tensor_A A'$ standard over $A'$.

\begin{Lemma} \label{std-lem}
For a finitely presented ring map $A \to B$, the loci of vanishing of standard over $A$ elements of $B$ cut out the locus of non-smoothness of $\Spec(B) \to \Spec(A)$. The radical of the ideal generated by the  elements of $B$ standard over $A$ is $H_{B/A}$. 
\end{Lemma}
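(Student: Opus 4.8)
The statement has two parts: (i) the vanishing loci of standard elements of $B$ cut out the non-smooth locus of $\Spec(B)\to\Spec(A)$, and (ii) the radical $\sqrt{(\text{standard elements})}$ equals the singular ideal $H_{B/A}$. Part (ii) is essentially a reformulation of part (i) together with the definition of $H_{B/A}$ as (the radical of) the ideal cutting out the non-smooth locus, so the real content is (i), and within (i) the substantive direction is that enough standard elements exist: if $\pp\in\Spec(B)$ is a point at which $A\to B$ is smooth, we must produce a standard element not lying in $\pp$. The reverse inclusion — that a standard element $b=b'b''$ vanishes wherever the map fails to be smooth — is the easy direction: where $b'$ is a unit, the Jacobian criterion with the chosen $f_1,\dots,f_r$ shows the relevant minor is invertible, and where $b''$ is a unit, $I=(f_1,\dots,f_r)$ locally, so $B$ is locally a complete intersection presented by $r$ equations with invertible $r\times r$ Jacobian minor, hence smooth at that point.

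\textbf{Key steps for the hard direction.} First I would localize: smoothness of $A\to B$ at $\pp$ means, after inverting some $g\notin\pp$, the map $A\to B_g$ is standard smooth, i.e.\ admits a presentation $B_g\cong A[Y_1,\dots,Y_n]/J$ with $J=(h_1,\dots,h_c)$ and $\det(\partial h_i/\partial Y_j)_{1\le i,j\le c}$ invertible in $B_g$ — this is the standard local structure theorem for smooth morphisms (EGA IV, or the Stacks Project tag on ``standard smooth''). Next I would translate this back to a presentation of $B$ itself: write $B\cong A[X_1,\dots,X_m]/I$; the passage to $B_g$ amounts to adjoining one more variable $X_{m+1}$ inverting a lift $\tilde g$ of $g$, i.e.\ adding the relation $X_{m+1}\tilde g-1$, and then possibly changing coordinates. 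One then assembles from the $h_i$ and the relation $X_{m+1}\tilde g-1$ a family $f_1,\dots,f_r\in I$ with $r\le m+1\le$ (new number of variables) whose Jacobian determinant $b'$ restricts to something invertible after inverting $g$, and checks that a suitable power of $\tilde g$ (which already kills $I/(f_1,\dots,f_r)$ after localization, hence some multiple does so integrally) provides the cofactor $b''$. The upshot is a standard element whose product $b'b''$ is invertible in $B_g$, in particular nonzero at $\pp$.

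\textbf{Main obstacle.} The bookkeeping in the previous paragraph is the delicate point: one must carefully manage the count $r\le m$ (the rank constraint in the definition of ``standard over $A$''), the addition of the localization variable, and the fact that ``$b''$ kills $I/(f_1,\dots,f_r)$'' is an integral statement that must be deduced from a localized equality $I_g=(f_1,\dots,f_r)_g$ by clearing denominators — i.e.\ multiplying $b''$ by a further power of $\tilde g$, which is harmless since any multiple of a standard element is standard. This is precisely the argument underlying General N\'eron Desingularization (it is essentially \cite[Lemma 1.2]{Po1}-style lemmas, or the treatment in Swan's exposition), so rather than reproduce it I would cite the standard source; the point worth stating explicitly is only that the construction is compatible with the rank bound $r\le m$ and with base change, the latter being clear since determinants and annihilators commute with $-\otimes_A A'$. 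Finally, part (ii) follows formally: by (i) the radical of the ideal generated by standard elements has the same vanishing locus as $H_{B/A}$, and since $H_{B/A}$ is by definition a radical ideal (the radical of the ideal defining the non-smooth locus), the two coincide.
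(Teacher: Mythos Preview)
Your easy direction and the reduction of (ii) to (i) are correct. The gap is in the hard direction, precisely at the step you flag as ``bookkeeping'', and it is not merely bookkeeping.

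When you adjoin $X_{m+1}$ together with the relation $X_{m+1}\tilde g - 1$, what you have written down is a presentation of $B_g$, not of $B$. The relation $X_{m+1}\tilde g - 1$ does not lie in $I \subset A[X_1,\dots,X_m]$ (it involves a variable not present there), so it cannot appear among your $f_1,\dots,f_r \in I$ as you claim; and the $h_i$ live in an unrelated polynomial ring $A[Y_1,\dots,Y_n]$, with no mechanism given for transporting them into $I$. Your construction therefore produces an element standard for $B_g$ over $A$, whereas the definition requires a presentation of $B$ itself. Deferring this step to a citation is not innocent: the sources you invoke (Swan's exposition, the desingularization papers) prove exactly the lemma under discussion, so appealing to them here is circular.

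The paper's argument avoids the issue by never changing presentation. One fixes $B \cong A[X_1,\dots,X_m]/I$ once and for all and reads smoothness on a neighborhood $D(b)$ of $\pp$ off from the conormal sequence: $(I/I^2)_b$ is a free $B_b$-module and $(I/I^2)_b \hookrightarrow \bigoplus_i B_b\,dX_i$ is a split injection. Choosing $f_1,\dots,f_r \in I$ whose classes form a basis of $(I/I^2)_b$ and reordering the $X_i$ so that $dX_{r+1},\dots,dX_m$ map to a basis of the cokernel makes the $r\times r$ Jacobian minor $b'$ a unit in $B_b$, hence a divisor of some power of $b$. For $b''$, the equality $I_b = (f_1,\dots,f_r)_b + I_b^2$ and the determinant-trick form of Nakayama give an annihilator of $I_b/(f_1,\dots,f_r)_b$ lying in $1 + I_b$; clearing denominators produces an element $b^N(b^n + i)$ with $i \in I$ that kills $I/(f_1,\dots,f_r)$ and reduces to $b^{N+n}$ in $B$. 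Hence some power of $b$ is a multiple of $b'b''$ and therefore standard, with no auxiliary variable and no second presentation required.
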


\begin{proof}
The argument is standard (compare with \cite{El}, \cite[4.3]{S}) but we include it due to the lack of a convenient reference.

If $b \in B$ is standard over $A$, then $B_b$ is the localization of the standard smooth $A$-algebra $(A[X_1, \dots, X_n]/(f_1, \dots, f_r))_{\det ({(\partial f_i/\partial X_{j}))_{1 \leq i,\, j \leq r}}}$ (see \cite[00T8]{SP}), so is $A$-smooth. Conversely, if $B_b$ is the coordinate ring of a smooth neighborhood of a fixed prime $\pp \subset B$, then we may choose a presentation $B[X_1, \dotsc, X_m]/I$ and  $f_1, \dotsc, f_r \in I$ such that, at the expense of localizing at $\pp$ further. The module $(I/I^2)_b$ is a free $B_b$-module with a basis given by the classes of $f_1, \dotsc, f_r$ and $(I/I^2)_b \to \bigoplus_{i = 1}^m B_b \cdot dX_i$ is a split injection such that $dX_{r + 1}, \dots, dX_m$ maps to a basis for the quotient. The first condition and  Nakayama's lemma \cite[00DV]{SP} then supply an $i \in I$ with $(1 +i/b^n)I_b \subset (f_1, \dots, f_r)_b$ for some $n > 0$. It follows that $b^N(b^n + i)$ for some $N > 0$ kills $I/(f_1, \dotsc, f_n)$ and maps to a power of $b$ in $B$. The second condition implies that $b'= \det {(\partial f_i/\partial X_{j}))_{1 \leq i,\, j \leq r}}$ is a unit in $B_b$, so that $b'$ divides some power of $b$ in $B$. In conclusion, some power of $b$ is standard over $A$, as desired.
\hfill\ \end{proof}

To stress the relevance of the desingularization lemma \ref{k}, we recall the following well-known lemma (see \cite[(1.5)]{S} or \cite[07C3]{SP}) and definitions, which will be crucial throughout this paper.

\begin{Lemma} \label{factor-through}
For a ring $R$ and a set $\mathcal S$ of finitely presented $R$-algebras, an $R$-algebra $R'$ is a filtered direct limit of elements of $\mathcal S$ if and only if every $R$-morphism $B \to R'$ with $B$ a finitely presented $R$-algebra factors as $B \to S \to R'$ for some $S \in \mathcal S$. 
\end{Lemma}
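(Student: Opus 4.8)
## Proof plan for Lemma \ref{factor-through}

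The plan is to prove both implications directly from the definition of filtered direct limit. For the forward direction, suppose $R' = \varinjlim_{\lambda} S_\lambda$ with each $S_\lambda \in \mathcal{S}$ and the index set filtered. Given an $R$-morphism $\phi\colon B \to R'$ with $B$ finitely presented, write $B \cong R[X_1,\dots,X_m]/(f_1,\dots,f_n)$. Each image $\phi(\bar X_j) \in R' = \varinjlim S_\lambda$ comes from some stage, and since there are finitely many variables and the system is filtered, all of them come from a common stage $S_{\lambda_0}$; this gives an $R$-algebra map $R[X_1,\dots,X_m] \to S_{\lambda_0}$. The relations $f_1,\dots,f_n$ map to elements of $S_{\lambda_0}$ whose images in $R'$ vanish; since $R'$ is the colimit and there are finitely many such relations, after passing to a further stage $S_{\lambda_1}$ (using filteredness again) all the $f_i$ already vanish in $S_{\lambda_1}$. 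This factors $R[X_1,\dots,X_m] \to S_{\lambda_1}$ through $B$, giving $B \to S_{\lambda_1} \to R'$ as required.

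For the converse, suppose every $R$-morphism from a finitely presented $R$-algebra to $R'$ factors through some member of $\mathcal{S}$. One forms the category $\mathcal{C}$ whose objects are pairs $(S, \psi)$ with $S \in \mathcal{S}$ and $\psi\colon S \to R'$ an $R$-morphism, and whose morphisms $(S,\psi) \to (S',\psi')$ are $R$-morphisms $S \to S'$ commuting with the maps to $R'$. There is an obvious cocone from the forgetful diagram $\mathcal{C} \to R\text{-alg}$ to $R'$; I would check that $\mathcal{C}$ is filtered and that the induced map $\varinjlim_{\mathcal{C}} S \to R'$ is an isomorphism. Filteredness of $\mathcal{C}$: given $(S_1,\psi_1)$ and $(S_2,\psi_2)$, the algebra $S_1 \otimes_R S_2$ is finitely presented and maps to $R'$, so by hypothesis this map factors through some $S \in \mathcal{S}$, which furnishes a common target; coequalizing parallel arrows is handled similarly by factoring the relevant finitely presented quotient. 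Surjectivity of $\varinjlim_{\mathcal{C}} S \to R'$: any $x \in R'$ generates a finitely presented (indeed, polynomial) $R$-subalgebra $R[x] \subseteq R'$, which by hypothesis factors through some $S \in \mathcal{S}$, so $x$ lies in the image. Injectivity: if $s \in S$ with $(S,\psi) \in \mathcal{C}$ maps to $0$ in $R'$, then $\psi$ factors as $S \to S/(s) \to R'$ — wait, $S/(s)$ need not be in $\mathcal{S}$, so instead one maps the finitely presented algebra $S$ again, but now tracks that $s \mapsto 0$; applying the hypothesis to $\psi$ itself and using that a factorization through $S' \in \mathcal{S}$ sends $s$ into the kernel over $S'$, one pushes $s$ to zero at a later stage.

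The main obstacle is the bookkeeping in the converse direction, specifically verifying that $\mathcal{C}$ is genuinely filtered (both the cospan-completion and the coequalizer condition) and that elements killed in $R'$ are killed at a finite stage of the colimit over $\mathcal{C}$. This is where the finite presentation hypothesis is essential: it guarantees that "factoring through $\mathcal{S}$" can absorb finitely many generators and finitely many relations at once, so every diagram one needs to complete involves only finitely presented data. Since this is a standard result (references \cite[(1.5)]{S} and \cite[07C3]{SP} are cited), I would present the argument compactly, emphasizing the two uses of the hypothesis — once for surjectivity of the colimit map and once for resolving the filtered-diagram conditions — rather than belaboring the diagram chases.
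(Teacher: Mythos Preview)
The paper does not supply its own proof of this lemma: it merely recalls the statement as ``well-known'' and cites \cite[(1.5)]{S} and \cite[07C3]{SP}. Your proposal is the standard argument one finds in those references, and both directions are set up correctly.

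One point deserves tightening. In the injectivity step you write down the right idea---factor $\psi$ through $S/(s)$---and then talk yourself out of it because $S/(s)$ need not lie in $\mathcal S$. But that was never required: the hypothesis asks only that $S/(s)$ be a finitely presented $R$-algebra, which it is (as a quotient of the finitely presented $S$ by a single element). Applying the hypothesis to the map $S/(s) \to R'$ gives a factorization $S \to S/(s) \to S' \to R'$ with $S' \in \mathcal S$, and $s$ visibly dies in $S'$. Your fallback (``apply the hypothesis to $\psi$ itself'') does not work as stated: a factorization $S \to S' \to R'$ only tells you the image of $s$ in $S'$ dies in $R'$, not in $S'$. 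So restore your first instinct and drop the ``wait.'' The same remark applies to the coequalizer check for filteredness: the quotient $S_2/(f(x)-g(x):x \text{ generates } S_1)$ is finitely presented, and that is all you need to invoke the hypothesis.
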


By \cite[1.8]{P} (see also \cite[07GC]{SP}), a map of Noetherian rings is ind-smooth if and only if $A'$ is $A$-flat and has geometrically regular $A$-fibers. In particular, a field extension $K'/K$ is ind-smooth if and only if it is separable.

Concretely, by  Lemma \ref{factor-through}, a ring map $A \to A'$ is ind-smooth if and only if every factorization $A \to B \to A'$ with $B$ finitely presented over $A$ can be refined to $A \to B \to S \to A'$ with $S$ smooth (or merely ind-smooth) over $A$.  Thus, a finite product or a filtered direct limit of ind-smooth $A$-algebras is ind-smooth. Evidently, ind-smooth morphisms are stable under base change. They are also stable under compositions, in fact, we have the following slightly finer criterion.

\begin{Lemma} \label{ind-sm-comp}
For an ind-smooth map $A \to A'$ and a map $A' \to A''$ such that for every factorization $A \to B \to A''$ with $B$ finitely presented over $A$ the induced factorization $A' \to A' \tensor_A B \to A''$ can be refined to $A' \to A' \tensor_A B \to S' \to A''$ for some smooth $A'$-algebra $S'$, the map $A \to A''$ is ind-smooth. In particular, the composition of ind-smooth maps is ind-smooth.
\end{Lemma}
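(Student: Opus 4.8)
The plan is to reduce to a factorization criterion and then descend the smooth algebra produced by the hypothesis to a finite stage. Recall that, by Lemma \ref{factor-through} (the criterion recorded just before the statement), $A\to A''$ is ind-smooth as soon as every factorization $A\to B\to A''$ with $B$ finitely presented over $A$ can be refined to $A\to B\to S\to A''$ with $S$ smooth over $A$. So fix such a $B$ together with its structure map $g\colon B\to A''$, and fix a factorization $A\to B\xrightarrow{g}A''$. Applying the hypothesis to the induced $A'$-algebra map $A'\otimes_A B\to A''$, we obtain a refinement $A'\otimes_A B\to S'\to A''$ with $S'$ smooth over $A'$; restricting along $B\to A'\otimes_A B$ this records that $B\to A'\otimes_A B\to S'\to A''$ equals $g$.

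Next I would exploit that $A\to A'$ is ind-smooth: write $A'=\varinjlim_\lambda A_\lambda$ as a filtered direct limit of smooth $A$-algebras, so that $A'\otimes_A B=\varinjlim_\lambda (A_\lambda\otimes_A B)$. The finitely presented $A'$-algebra $S'$, equipped with the $A'$-algebra map $A'\otimes_A B\to S'$, descends to some finite stage: by the standard limit-descent formalism (as in the ``Limits'' material of \cite{SP}) there is an index $\lambda$, a finitely presented $A_\lambda$-algebra $S_\lambda$ with an isomorphism $S_\lambda\otimes_{A_\lambda}A'\cong S'$ of $A'$-algebras, and an $A_\lambda$-algebra map $A_\lambda\otimes_A B\to S_\lambda$ whose base change along $A_\lambda\to A'$ is $A'\otimes_A B\to S'$. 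After enlarging $\lambda$ I may further arrange that $S_\lambda$ is smooth over $A_\lambda$, since smoothness of a finitely presented algebra is inherited from a filtered colimit (openness of the smooth locus together with the fact that finitely presented data are limit-preserving). As $A\to A_\lambda$ is smooth and $A_\lambda\to S_\lambda$ is smooth, the composite $A\to S_\lambda$ is smooth.

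It then remains to verify the factorization. The composite
$B\to A_\lambda\otimes_A B\to S_\lambda\to S_\lambda\otimes_{A_\lambda}A'=S'\to A''$
agrees with $B\to A'\otimes_A B\to S'\to A''$, because $A_\lambda\otimes_A B\to S_\lambda$ base-changes to $A'\otimes_A B\to S'$ and $B\to A_\lambda\otimes_A B\to A'\otimes_A B$ equals $B\to A'\otimes_A B$; and the latter composite is $g$. Hence $A\to B\to S_\lambda\to A''$ is a refinement of $A\to B\xrightarrow{g}A''$ through the smooth $A$-algebra $S_\lambda$, and Lemma \ref{factor-through} yields that $A\to A''$ is ind-smooth.

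The one genuinely technical point is the descent step in the second paragraph: producing, compatibly, a finite stage $A_\lambda$, a smooth $A_\lambda$-model $S_\lambda$ of $S'$, and an $A_\lambda$-model of the map $A'\otimes_A B\to S'$. This is routine limit bookkeeping — finitely presented algebras and morphisms between them spread out over a filtered system, and smoothness of a finitely presented algebra propagates down the system — but it is where all the care is needed; the rest of the argument is formal once such a model over $A_\lambda$ is available.
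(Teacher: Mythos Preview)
Your proof is correct and follows essentially the same route as the paper: write $A'$ as a filtered direct limit of smooth $A$-algebras and descend the smooth $A'$-algebra $S'$ to a finite stage. The only cosmetic difference is that the paper, instead of also descending the map $A'\otimes_A B\to S'$, simply argues that $A\to S'$ itself is ind-smooth (since $S'\cong\varinjlim_{j\ge i}(S_j\otimes_{S_i}S'_i)$ once $S'$ is modeled by a smooth $S_i$-algebra $S'_i$) and then invokes the ``or merely ind-smooth'' form of the factorization criterion stated just before the lemma; your version carries out one extra descent step to land on a single smooth $A$-algebra $S_\lambda$, which is equally valid.
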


\begin{proof}
It suffices to argue that the map $A \to A' \to S'$ is ind-smooth. For this, we express $A'$ as a filtered direct limit  of smooth $A$-algebras $S_i$, note that $S'$ descends to a smooth $S_i$-algebra $S'_i$ for some $i$, and conclude that $S'$ is then the filtered direct limit of the smooth $A$-algebras $S'_j = S_j \tensor_{S_i} S'_i$ with $j \geq i$.
\hfill\ \end{proof}

The following lemma originates in \cite[(7.1)]{P0} and its variants have appeared, for instance, in \cite[18.1]{S}, \cite[7.2]{P}, \cite[07CT]{SP}, \cite[Proposition 3]{ZKPP}, and \cite[Proposition 5]{P1}. The version below differs in two aspects: we do not assume Noetherianness and do not require the elements $a$ or $b$ to come from the base ring $A$. The latter improvement is particularly convenient for our purposes---we recall that in the General N\'{e}ron desingularization arranging for $b$ to come from $A$ is an additional step before one can apply the desingularization lemma (compare with, for instance, \cite[07F4]{SP}).

\begin{Lemma} \label{k}
For a commutative diagram of ring morphisms

\xymatrix@R=0pt{
& B \ar[rd] & & && B \ar[dd]^{b \, \mapsto\, a} \ar[rd] & \\
A \ar[rd] \ar[ru] & & V & \mbox{that factors as follows} & A \ar[ru]\ar[rd] & & V/a^3V \\ 
& A' \ar[ru] & & && A'/a^3A' \ar[ru] &
}

\noindent with $B$ finitely presented over $A$, a $b \in B$ that is standard over $A$, and a nonzerodivisor $a \in A'$ that maps to a nonzerodivisor in $V$ that lies in every maximal ideal of $V$, % in principle can relax this further to say that $b$ maps to a multiple of $a$ in $V$ (only need the diagram to make sense), but the $b \mapsto a$ assumption on the middle map implies that then $b$ maps to an element of the form $a + a^3v$ for some $v \in V$, that is, to $au$ for $u \in V^\times$.
there is a smooth $A'$-algebra $S$ such that the original diagram factors as follows:
%for some finitely presented $A$-algebra $B = A[Y_1,\ldots,Y_m]/I$ and a finite type $A$-algebra $A'$. If there are polynomials $f_1,\ldots,f_r \in I$ with $r \le m$ and an $r\times r$-minor $M$ of the Jacobian matrix $(\partial f_i/\partial Y_{j'})$ such that 

\hskip 4 cm\xymatrix@R=0pt{
& B \ar[rdd] \ar[rrd] & & \\
A \ar[rd] \ar[ru] & &  & V. \\
& A' \ar[r]  & S \ar[ru] &
}

 \end{Lemma}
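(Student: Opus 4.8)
The statement is a strong form of the key lemma of General N\'eron desingularization, and the natural route is the N\'eron smoothening technique: produce the smooth $A'$-algebra $S$ by perturbing the point of $\Spec B$ that the map $B\to V$ picks out. Two things need care compared with the classical situation: no Noetherian hypothesis is available, and $a,b$ live in $B$ rather than in $A$, so the usual preliminary step of ``moving $b$ into the base'' must be bypassed.

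First I would unwind the data. Fix a presentation $B\iso A[X_1,\dots,X_m]/I$ and $f_1,\dots,f_r\in I$ with $r\le m$ witnessing that $b$ is standard over $A$, so that $b=b'b''$ with $b'=\det\big((\partial f_i/\partial X_j)_{1\le i,j\le r}\big)$ and $b''I\subseteq (f_1,\dots,f_r)$ in $A[X]$. Let $x=(x_1,\dots,x_m)\in V^m$ be the images of the $X_j$ under $B\to V$; then $f_i(x)=0$ for all $i$. Reading the given factorization modulo $a^3$ along this presentation yields $x'=(x'_1,\dots,x'_m)\in (A')^m$ with $f_i(x')\in a^3A'$ for all $i$, with $b(x')\equiv a\pmod{a^3A'}$, and with $x'_j\equiv x_j\pmod{a^3V}$ for all $j$. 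Hence $b(x)\equiv a\pmod{a^3V}$, so $b(x)=au$ with $u\in 1+a^2V$; as $a$ lies in every maximal ideal of $V$, $u$ is a unit, so $b'(x)$ and $b''(x)$ are nonzerodivisors of $V$. The arithmetic input I would isolate is that $b'(x')$ divides $a$ in $A'$ and $b'(x)$ divides $a$ in $V$; this is the one place the valuation-ring structure is used, since from $b'(x')b''(x')\equiv a\pmod{a^3A'}$ with $a$ a nonzerodivisor lying in every maximal ideal one reads off $\val(b'(x'))\le\val(a)$, and likewise in $V$. Using this, write $f_i(x')=a\,b'(x')^2\,h'_i$ with $h'_i\in A'$.

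Next, the perturbation. Introduce variables $Z_1,\dots,Z_m$ and set $\xi_j:=x'_j+a\,b'(x')\sum_{k=1}^r\alpha_{jk}Z_k$ for $j\le r$, where $(\alpha_{jk})$ is the adjugate of $\big((\partial f_i/\partial X_l)(x')\big)_{1\le i,l\le r}$, and $\xi_j:=x'_j+a\,b'(x')^2Z_j$ for $j>r$. A Taylor expansion of the $f_i$ about $x'$, together with the adjugate identity, shows that $f_i(\xi)$ is divisible in $A'[Z]$ by $a\,b'(x')^2$, with quotient of the shape $G_i=h'_i+Z_i+(\text{linear form in }Z_{r+1},\dots,Z_m)+a\cdot(\text{polynomial in }Z,\text{ of order }\geq 2)$. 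Then $S':=\big(A'[Z_1,\dots,Z_m]/(G_1,\dots,G_r)\big)_{\delta}$, with $\delta:=\det\big((\partial G_i/\partial Z_k)_{1\le i,k\le r}\big)$, is standard smooth over $A'$, because that Jacobian matrix equals $\mathrm{Id}_r+a\cdot(\text{matrix over }A'[Z])$. Now choose $\zeta\in V^m$ with $\xi_j(\zeta)=x_j$ for all $j$; this can be done with $\zeta$ integral over $V$, using the adjugate identity, the divisibility $b'(x')\mid a$, and $x_j\equiv x'_j\pmod{a^3V}$. Since $a\,b'(x')^2\,G_i(\zeta)=f_i(\xi(\zeta))=f_i(x)=0$ and $a\,b'(x')^2$ is a nonzerodivisor of $V$, we get $G_i(\zeta)=0$; and $\delta(\zeta)\in 1+aV$, a unit of $V$. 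Hence $Z_k\mapsto\zeta_k$ defines an $A'$-algebra map $S'\to V$. Finally $b(\xi)=a\,H(Z)$ for some $H\in A'[Z]$ (using $b(x')\equiv a\pmod{a^3A'}$), with $H(\zeta)=b(x)/a=u$, a unit of $V$; set $S:=S'_H$, still smooth over $A'$ and still mapping to $V$.

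It remains to factor $B\to V$ through $S$. In $S$ the element $b(\xi)=aH$ is a nonzerodivisor ($a$ is a nonzerodivisor of $A'$, $S$ is $A'$-flat, $H$ is a unit of $S$), hence so is its divisor $b''(\xi)$. The $A$-algebra map $A[X]\to S$, $X_j\mapsto\xi_j$, kills $f_1,\dots,f_r$ (as $G_i=0$ in $S$), and sends each $g\in I$ to $g(\xi)$, which satisfies $b''(\xi)g(\xi)\in(f_1(\xi),\dots,f_r(\xi))=0$ by $b''I\subseteq(f_1,\dots,f_r)$; since $b''(\xi)$ is a nonzerodivisor this forces $g(\xi)=0$, so the map factors through $B$, giving $B\to S$ over $A$. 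By construction $B\to S\to V$ sends $X_j\mapsto\xi_j\mapsto x_j$, so it agrees with $B\to V$, and $A'\to S\to V$ is the structure map; thus $S$ refines the diagram as required. The main obstacle --- and the real content --- is the perturbation together with the integrality bookkeeping: one cannot invert $b'$, whose image in $V$ divides $a$ and is typically a non-unit, so everything must be arranged so that the only elements ever inverted, $\det(\mathrm{Id}_r+a(\cdots))$ and $H$, lie in $1+aV$ or map to $b(x)/a=u$, which are units of $V$ precisely because $a$ is a nonzerodivisor lying in every maximal ideal. Accommodating $a,b\in B$ (not $A$) is handled by never leaving the chosen presentation and using $b''I\subseteq(f_1,\dots,f_r)$ in place of the classical reduction step.
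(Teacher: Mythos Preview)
Your approach is the classical N\'eron smoothening and is in spirit the same as the paper's: perturb the $A'$-point $x'$, Taylor-expand, use the adjugate identity, and divide out the common factor. However, there is a genuine gap at the step where you assert ``$b'(x')$ divides $a$ in $A'$.'' You justify this by invoking a valuation on $A'$ (``one reads off $\val(b'(x'))\le\val(a)$''), but the hypotheses place no valuation-ring structure on $A'$, nor do they say that $a$ lies in every maximal ideal of $A'$; they only say that the \emph{image} of $a$ in $V$ is a nonzerodivisor lying in every maximal ideal of $V$. In the applications of this lemma in the paper (for instance Proposition~\ref{pass-to-hat} and Proposition~\ref{reduce-to-ht-1}) the role of $A'$ is played by a smooth algebra over the base, certainly not a valuation ring, so the step fails as written. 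Without this divisibility you cannot write $f_i(x')=a\,b'(x')^2h'_i$ with $h'_i\in A'$, and the construction of the $G_i$ collapses.

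The gap is easily repaired. From $b'(x')b''(x')\equiv a\pmod{a^3A'}$ one has $b'(x')b''(x')=a(1+a^2c)$ for some $c\in A'$; since $1+a^2c$ maps to a unit of $V$ (here is where ``$a$ lies in every maximal ideal of $V$'' is used), one may replace $A'$ by its localization at $1+a^2c$, after which $b'(x')\mid a$ holds honestly and your argument goes through unchanged, the resulting $S$ still being smooth over the original $A'$. The paper sidesteps the issue by a different choice of perturbation: it multiplies the adjugate by $b''$ rather than scaling by $b'(x')$, so that the linear Taylor term becomes $a\cdot\widetilde{b'b''}\,T_i=a^2u\,T_i$ and one divides by $a^2$ (for which only $f_i(x')\in a^3A'$ is needed) instead of by $a\,b'(x')^2$. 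Incidentally, the lemma as stated does not assume $V$ is a valuation ring either; the paper's proof uses only that $a$ is a nonzerodivisor in $V$ and that elements of $1+aV$ are units there.
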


\begin{proof}
The finitely presented $A'$-algebra $B \tensor_A A'$ comes equipped with a morphism to $V$ and a retraction modulo $b^3$ to $A'/a^3A'$ that sends $b$ to $a$. Moreover, the image of $b$ in $B \tensor_A A'$ is standard over $A'$. Thus, by replacing $A$ by $A'$ and $B$ by $B \tensor_A A'$, we reduce to the case $A = A'$. 

Since the images of $a$ and $b$ in $V$ agree modulo $a^3V$, these images are unit multiples of each other. We  write 
\[
B = A[X_1, \dots, X_m]/I \ \  \mbox{and} \ \ f_1, \dots, f_r \in I 
\]
and choose $b' = \det ({(\partial f_i/\partial X_{j})_{1 \leq i,\, j \leq r}}) \in A[X_1, \dotsc, X_m]$ and a $b'' \in  A[X_1, \dotsc, X_m]$ that kills $I/(f_1, \dotsc, f_r)$ with $b = b'b''$ in $B$. In these coordinates, we fix a map

\xymatrix@R=0pt@C=10pt{
&& & & A[X_1, \dotsc, X_m] \ar[dd] \ar@{->>}[r] & B \ar@{->>}[r] & B/b^3B \ar[dd] \\ 
A[X_1, \dotsc, X_m] \ar[rr]^-{f\mapsto \tilde{f}} && A & \mbox{that makes the diagram} &
\\
&&&& A \ar@{->>}[rr] && A/a^3A
}
\noindent commute, so that $\tilde{f}\in a^3A$ for every $f \in I$. In particular, the assumption $b \mapsto a$  gives $\widetilde{b'b''} \equiv a \bmod a^3A$. It follows that 
\[
\widetilde{b'b''} = a u \ \ \mbox{for some} \ \ u\in 1+a^2A,
\]
\noindent so that, in particular, $u$ maps to a unit in $V$.

We consider the $m \times m$ matrix $\Delta$ given by
\[
 \left({\begin{matrix} 
 \partial f_1/\partial X_1 & \partial f_1/\partial X_2 & \dotsc & \partial f_1/\partial X_r & \partial f_1/\partial X_{r + 1} & \partial f_1/\partial X_{r + 2} & \dotsc & \partial f_1/\partial X_m \\
\partial f_2/\partial X_1 & \partial f_2/\partial X_2 & \dotsc & \partial f_2/\partial X_r & \partial f_2/\partial X_{r + 1} & \partial f_2/\partial X_{r + 2} & \dotsc & \partial f_2/\partial X_m \\
\vdots & \vdots  & \ddots & \vdots & \vdots & \vdots & \ddots & \vdots \\
\partial f_r/\partial X_1 & \partial f_r/\partial X_2 & \dotsc & \partial f_r/\partial X_r & \partial f_{r}/\partial X_{r + 1} & \partial f_r/\partial X_{r + 2} & \dotsc & \partial f_r/\partial X_m \\
0 & 0 & \dotsc & 0 & 1 & 0 & \dotsc & 0 \\
0 & 0 & \dotsc & 0 & 0 & 1 & \dotsc & 0 \\
\vdots & \vdots  & \ddots & \vdots & \vdots & \vdots & \ddots & \vdots \\
0 & 0 & \dotsc & 0 & 0 & 0 & \dotsc & 1 \\
\end{matrix} } \right)
\]
that satisfies $\det(\Delta) = b'$. We let $Ad(\Delta)$ denote the adjoint matrix, so that 
\[
Ad(\Delta) \cdot \Delta = \Delta \cdot Ad(\Delta) = b' \cdot Id_{m \times m}.
\] 
 We let $x_i$ and $x_i'$ be the images in $V$ of $X_i$ and $\tilde{X}_i$, respectively, so that, by construction, $x_i - x'_i \in a^3 V$. Moreover, $a$ is a nonzerodivisor in $V$ and there we have that
 \[
\left({ \begin{smallmatrix} t_1 \\ \vdots \\ t_m \end{smallmatrix}}\right)    \tilde{\Delta} \ \  \left({\begin{smallmatrix} (x_1 - x_1')/a^2 \\ \vdots \\ (x_m - x_m')/a^2 \end{smallmatrix}}\right)  
\ \ \mbox{satisfies} \ \ t_i \in aV \ \ \mbox{and} \ \ 
a\widetilde{b''Ad(\Delta)} \left({ \begin{smallmatrix} t_1 \\ \vdots \\ t_m \end{smallmatrix}}\right) = u  \left({\begin{smallmatrix} x_1 - x_1' \\ \vdots \\ x_m - x_m' \end{smallmatrix}}\right).
 \]
We let $T_1,\ldots,T_m$ be new variables and set
\[
\left({ \begin{smallmatrix} h_1 \\ \vdots \\ h_m \end{smallmatrix}}\right) = u\left({ \begin{smallmatrix} X_1- \tilde{X}_1 \\ \vdots \\ X_m- \tilde{X}_m \end{smallmatrix}}\right)-a\widetilde{b''Ad(\Delta)} \left({ \begin{smallmatrix} T_1 \\ \vdots \\ T_m \end{smallmatrix}}\right), \ \ \mbox{so that} \ \ h_i \in A[X_1, \dots, X_m, T_1, \dots, T_m].
\]
By construction, if we map $T_i$ to $t_i$ in $V$, then the $h_i$ map to $0$, so we obtain the map
 \[
\phi: A_u[X_1, \dots, X_m, T_1, \dots, T_m]/(h_1, \dots, h_m) \to V \ \ \mbox{given by} \ \ X_i \mapsto x_i, \ \ T_i \mapsto t_i.
 \] 
Since we have inverted $u$, the source of this map may be identified with $A_u[T_1, \dotsc, T_m]$. To proceed further, we will use Taylor's formula to express each $f_i$ in terms of this identification.

  By Taylor's formula, for any ring $R$, any section $R[X_1, \dotsc, X_m] \xrightarrow{f\, \mapsto\, \tilde{f}} R$, and any $f \in R[X_1, \dotsc, X_m]$,
 \[
  f - \tilde{f} - \sum_{i = 1}^m \widetilde{(\partial f/\partial X_i)} (X_i - \tilde{X_i}) \in (X_1 - \tilde{X_1}, \dots, X_m - \widetilde{X_m})^2 \subset R[X_1, \dots, X_m].
  \]
In particular, by applying this with $R = A[T_1, \dots, T_m]$ and letting $d$ denote the maximal total degree of any monomial that appears in some $f_i$, we obtain 
\[
Q_i\in (T_1, \dotsc, T_m)^2 \subset A[T_1, \dots, T_m]
\]
for which 
\[ \begin{aligned} 
&u^df_i-u^d{\tilde f}_i\equiv u^{d - 1}a\tilde{b''} \ \ ( \widetilde{(\partial f_i/\partial X_1)}, \dots,\widetilde{(\partial f_i/\partial X_m)})
 \widetilde{Ad(\Delta)} \left({ \begin{smallmatrix} T_1 \\ \vdots \\ T_m \end{smallmatrix}}\right)+\\  
 &a^2  Q_i \bmod (h_1, \dots, h_m)  \equiv u^{d - 1}a\tilde{b''}\tilde{b'}T_i + a^2  Q_i \equiv a^2u^{d} T_i + a^2  Q_i \bmod (h_1, \dots, h_m)
\end{aligned}\]
We have $\tilde{f_i} = a^2 b_i$ for some $b_i\in aA$, and for $1 \leq i \leq r$ we set
\[
g_i =  u^db_i+u^d T_i+Q_i\in A[T_1, \dots, T_m], \ \ \mbox{so that} \ \ a^2g_i \equiv u^d f_i  \bmod (h_1, \dotsc, h_m).
\]
This achieves the promised expression of $f_i$ in terms of the identification of the source of $\phi$ with $A[T_1, \dotsc, T_m$] and simultaneously shows that each $g_i$ vanishes in $V$, so that $\phi $ induces a map 
\[
\phi \colon A_u[X_1, \dots, X_m, T_1,\ldots,T_m]/(I, g_1, \dots, g_r,  h_1, \dots, h_m) \to V.
\] 
In $A[X_1, \dots, X_m]$ the element $b'b'' - au$ lies in the ideal $(X_1 - \tilde{X}_1, \dots, X_m  - \tilde{X}_m)$, so in the quotient $A_u[X_1, \dots, X_m, T_1, \dots, T_m]/(h_1, \dots, h_m)$ it lies in the ideal $a(T_1, \dots, T_m)$. It then follows from the definition of $b''$ and the fact that after inverting $u$ and modulo $(h_1, \dots, h_m)$ the ideal $(g_1, \dots, g_r)$ contains $(f_1, \dots, f_r)$ that some element from the coset $a(u + (T_1, \dots, T_m))$ kills the image of $I$ in
\[
A_u[X_1, \dots, X_m, T_1, \dots, T_m]/(g_1, \dots, g_r, h_1, \dots, h_m).
\]
Setting $u' = \det ((\partial g_i/\partial T_j)_{1\leq i, j \leq r})$, we deduce that the same then holds in the localization 
\[\begin{aligned}
(A_u[X_1, \dots, X_m, T_1, \dots, T_m]/&(g_1, \dots, g_r, h_1, \dots, h_m))_{u'} \cong \\ &(A_u[T_1, \dots, T_m]/(g_1, \dots, g_r))_{u'}.
\end{aligned}\]
However,  the latter is smooth over $A$, to the effect that $a$ is a nonzerodivisor in the ring above. It follows that even some element $u'' \in u + (T_1, \dots, T_m)$ kills the image of $I$ in the ring above. By construction, both $u'$ and $u''$ map to units in $V$   and $\phi$ factors through the $A$-smooth algebra
\[
S = (A_u[X_1, \dots, X_m, T_1, \dots, T_m]/(g_1, \dots, g_r, h_1, \dots, h_m))_{u'u''}. 
\]
\hfill\ \end{proof}

In some situations, when applying Lemma \ref{k} we will not initially have a map $A' \to V$. The following lifting lemma will help to bypass this obstacle. Its key novel aspect is that the elements $s$, $s'$, and $v$ need not come from the base ring $A$ (compare with \cite[(8.1]{P0}, \cite[(17.1)]{S}, or \cite[07CP]{SP}). 

\begin{Lemma} \label{lift-lem}
For a ring morphism $A \to V$ with $V$ local, a smooth $A$-algebra $S$, an element $s \in S$, a nonunit $v \in V$, and a factorization
\[
A \to S \xrightarrow{s\, \mapsto\, v} V/v^{n}V \ \ \mbox{for some} \ \ n \ge 2,
\]
there are a smooth $A$-algebra $S'$, an element $s' \in S'$, and factorizations
\[
A \to S' \xrightarrow{s' \, \mapsto\, uv} V \ \ \mbox{with} \ \ u \in V^* \ \ \mbox{and} \ \ A \to S \xrightarrow{s\, \mapsto\, s'} S'/s'^nS' \to V/v^nV;
\]
if $s$ is the image of an element $a \in A$, then one may choose $s' = a$.
\end{Lemma}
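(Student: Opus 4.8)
The plan is to reduce to the case that $S$ is standard smooth over $A$ and then to \emph{relax} the equations cutting out $S$, producing a smooth $A$-algebra $S'$ that does map to $V$: the only obstruction to lifting $S\to V/v^{n}V$ to $V$ is that the defining equations $f_{j}=0$ of $S$ hold in $V$ merely modulo $v^{n}V$, and this is repaired by requiring in $S'$ only that each $f_{j}$ be divisible by an element mapping to a unit multiple of $v^{n}$. If $v^{n}V=0$, take $S'=S$, $s'=s$, $u=1$; so assume $v^{n}V\neq 0$ and set $\qq:=\ker\!\bigl(S\to V/v^{n}V\to V/\mm_{V}\bigr)$, a prime of $S$. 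Since $S$ is smooth over $A$, some principal localization $S_{g}$ with $g\notin\qq$ is standard smooth, say $S_{g}=\bigl(A[X_{1},\dots,X_{m}]/(f_{1},\dots,f_{c})\bigr)_{\delta}$ with $\delta=\det\bigl((\partial f_{i}/\partial X_{j})_{1\le i,j\le c}\bigr)$; as $g$ maps to a unit of the local ring $V/v^{n}V$, the given map factors through $S_{g}$, so we may replace $S$ by $S_{g}$ (this changes neither the hypotheses nor the conclusion). Write $s=\tilde s(X)/\delta^{e}$ for some $\tilde s\in A[X_{1},\dots,X_{m}]$ and $e\ge 0$, choosing $\tilde s:=a$ and $e:=0$ if $s$ is the image of an element $a\in A$. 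Pick lifts $x_{i}\in V$ of the images of the $X_{i}$ in $V/v^{n}V$; as $f_{j}=0$ and $\delta$ is a unit in $S$, the resulting map $A[X]\to V$ satisfies $f_{j}(x)\in v^{n}V$ and $\delta(x)\in V^{*}$ (its class in $V/v^{n}V$ is a unit of that local ring). Put $\sigma:=\tilde s(x)/\delta(x)^{e}\in V$; since $\sigma\equiv v\pmod{v^{n}V}$ and $n\ge 2$, we have $\sigma=v(1+v^{n-1}w)=u_{0}v$ with $u_{0}\in V^{*}$, so $\sigma^{n}V=v^{n}V$ and $f_{j}(x)\in\sigma^{n}V$ even if $v$ is a zerodivisor.

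Adjoin variables $W_{1},\dots,W_{c}$, put $r_{j}:=\delta(X)^{en}f_{j}(X)-\tilde s(X)^{n}W_{j}$ and $M:=\det\bigl((\partial r_{j}/\partial X_{i})_{1\le j,i\le c}\bigr)$, and set
\[
S':=\Bigl(A[X_{1},\dots,X_{m},W_{1},\dots,W_{c}]_{\delta}\big/(r_{1},\dots,r_{c})\Bigr)_{M},\qquad s':=\text{class of }\tilde s(X)/\delta^{e},\qquad u:=u_{0}.
\]
Then in $S'$ one has $f_{j}=s'^{\,n}W_{j}$ for each $j$ (the relations of $S$ are exactly relaxed to divisibility of $f_{j}$ by $s'^{\,n}$), and, when $s$ comes from $a\in A$, $s'$ is the image of $a$. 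Define $\varphi\colon S'\to V$ by $X_{i}\mapsto x_{i}$ and $W_{j}\mapsto f_{j}(x)/\sigma^{n}\in V$: this kills the $r_{j}$, and it makes sense after inverting $M$ because $\varphi(M)\in V^{*}$. For the last point, work modulo $\mm_{V}$: in $\varphi(\partial r_{j}/\partial X_{i})$ the summand $en\,\delta(X)^{en-1}(\partial\delta/\partial X_{i})f_{j}$ evaluates into $\mm_{V}$ because $f_{j}(x)\in v^{n}V\subseteq\mm_{V}$, and the summand $-n\,\tilde s(X)^{\,n-1}(\partial\tilde s/\partial X_{i})W_{j}$ evaluates to $-n\,\delta(x)^{e(n-1)}(\partial\tilde s/\partial X_{i})(x)\cdot f_{j}(x)/\sigma\in\mm_{V}$ (using $f_{j}(x)/\sigma\in\sigma^{\,n-1}V$ and $n\ge 2$); hence $\varphi(\partial r_{j}/\partial X_{i})\equiv\delta(x)^{en}(\partial f_{j}/\partial X_{i})(x)$ modulo $\mm_{V}$, and therefore $\varphi(M)\equiv\delta(x)^{enc+1}\notin\mm_{V}$.

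The two asserted factorizations now follow. First, $\varphi(s')=\sigma=u_{0}v$, so $A\to S'\xrightarrow{\,\varphi\,}V$ is the required map with $u=u_{0}\in V^{*}$. Second, $\varphi$ carries $s'^{\,n}S'$ into $\sigma^{n}V=v^{n}V$ and hence descends to $\bar\varphi\colon S'/s'^{\,n}S'\to V/v^{n}V$; on the other hand the $A$-algebra map $A[X]_{\delta}\to S'/s'^{\,n}S'$ with $X_{i}\mapsto X_{i}$ kills each $f_{j}$ (as $f_{j}=s'^{\,n}W_{j}$ in $S'$) and kills $\tilde s(X)^{n}$, so it factors through $\bigl(A[X]/(f)\bigr)_{\delta}/(s^{n})=S/s^{n}S$ and yields a map $S\to S'/s'^{\,n}S'$ sending $s\mapsto s'$; composing it with $\bar\varphi$ returns, on the generators $X_{i}$ and $\delta^{-1}$ of the $A$-algebra $S$, the classes of $x_{i}$ and $\delta(x)^{-1}$, i.e.\ exactly the given map $A\to S\to V/v^{n}V$. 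If $s$ comes from $a\in A$ then $s'=a$, so the refinement holds. Finally $S'$ is finitely presented and smooth over $A$: for the presentation $A[X,W,Y]/(r_{1},\dots,r_{c},\delta Y-1)$ the Jacobian determinant in the variables $X_{1},\dots,X_{c},Y$ equals $M\delta$, which is a unit of $S'$, so the Jacobian criterion holds at every prime of $S'$.

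The one step that genuinely needs care is this last smoothness check, equivalently the assertion $\varphi(M)\in V^{*}$: one must verify that relaxing $f_{j}=0$ to $f_{j}=s'^{\,n}W_{j}$ does not spoil the relevant $c\times c$ Jacobian minor at the point determined by $\varphi$ -- that is, that the $W_{j}$-dependent perturbations of that minor vanish modulo $\mm_{V}$. This is precisely where the hypothesis $n\ge 2$ enters, through $\tilde s(x)^{\,n-1}=\sigma^{n-1}\delta(x)^{e(n-1)}$ and $f_{j}(x)/\sigma\in\sigma^{\,n-1}V\subseteq\mm_{V}$; the rest is routine bookkeeping with the standard-smooth presentation and with the local ring $V$.
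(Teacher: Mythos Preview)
Your proof is correct and follows essentially the same strategy as the paper's: reduce to a standard presentation of $S$, lift the generators to $V$, and then ``relax'' the defining relations by introducing slack variables $W_j$ with $f_j = s'^{\,n}W_j$ to obtain the smooth $A$-algebra $S'$. The only noteworthy difference is cosmetic: the paper first passes to a standard-\'etale-over-polynomial presentation $S\cong(A[X_1,\dots,X_d,Y]/(f))_{g\cdot\partial f/\partial Y}$ with a \emph{single} equation $f$, so only one slack variable $W$ is needed and the smoothness of $S'$ is witnessed simply by inverting $\partial(f-\tilde s^{\,n}W)/\partial Y$; this also yields the clean identification $S'/s'^{\,n}S'\cong((S/s^nS)[W])_{\partial(f-\tilde s^{\,n}W)/\partial Y}$, whereas you construct the map $S\to S'/s'^{\,n}S'$ directly. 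Your Jacobian computation for the $c\times c$ minor $M$ is the multi-equation analogue and is carried out correctly (one tiny notational point: when $v$ may be a zerodivisor, ``$f_j(x)/\sigma$'' should be read as $\sigma^{n-1}w_j$ for the chosen $w_j$ with $\sigma^n w_j=f_j(x)$, which is what you actually use).
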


\begin{proof}
Due to the local structure of smooth and \'{e}tale morphisms \cite[054L,00UE]{SP}, by localizing $S$ around the preimage of the maximal ideal of $V$, we may assume that $S$ is standard \'{e}tale over a polynomial $A$-algebra, that is, that
\[
 S \cong (A[X_1, \dots, X_d, Y]/(f))_{g \cdot {\partial f}/{\partial Y}} \ \ \mbox{for some} \ \ f, g \in A[X_1, \dots, X_d, Y] \ \ \mbox{with $f$ monic in $Y$.}
\]
For a suitable $n\in {\bf N}$, some unit multiple of $s \in S$ of the form  $(g \cdot ( {\partial f}/{\partial Y}))^N\cdot s$  lifts to an $\tilde{s} \in A[X_1, \dots, X_d, Y]$. Letting $x_1, \dots, x_d, y$ be some lifts to $V$ of the images of $X_1, \dots, X_d, Y$ in $V/v^nV$, we find that the $A$-morphism

$
A[X_1, \dotsc, X_d, Y] \xrightarrow{X_i\, \mapsto\, x_i,\, Y\, \mapsto\, y} V 
$

\noindent maps $\tilde{s}$ to a unit multiple of $v$ (as may be checked modulo $v^{n}$), so it maps $f$ to $\tilde{s}^{n} w$ for some $w \in V$. Thus, we obtain the $A$-morphism
\[
S' = (A[X_1, \dots, X_d, Y, W])_{g \cdot {\partial f}/{\partial Y} \cdot {\partial (f - \tilde{s}^n W)}/{\partial Y}}/(f - \tilde{s}^n W) \xrightarrow{W\, \mapsto\, w} V.
\]
By construction, $S'$ is $A$-smooth and, setting $s' = \tilde{s} \cdot (g \cdot {\partial f}/{\partial Y})^{-N}$ in $S'$ we have the identification
\[
S'/s'^nS' \cong (A[X_1, \dots, X_d, Y, W])_{g \cdot {\partial f}/{\partial Y} \cdot {\partial (f - \tilde{s}^n W)}/{\partial Y}}/(f, s'^n) \cong ((S/s^nS)[W])_{{\partial (f - \tilde{s}^n W)}/{\partial Y}}
\]
with $s'$ corresponding to $s$ and compatibly with the maps to $V/v^nV$. The main part of the claim follows, and for the remaining assertion about $a$ note that if $s$ is the image of an $a \in A$, then we may choose $N = 0$ and $\tilde{s} = s' = a$ above.
\hfill\ \end{proof}

For desingularizing valuation rings, the above lemmas will be useful in several different ways. We illustrate this right away with the following results that facilitate passage to completions.

\begin{Proposition} \label{pass-to-hat}
For a  ring $A$, a dense extension of valuation rings (see Section 3) $V\subset V'$, $K$ the  fraction field of $V$,  a ring morphism $A \to V$, a finitely presented $A$-algebra $B$, and maps
\[
A \to B \to V \ \ \mbox{such that} \ \ B \to K \ \ \mbox{factors through some $A$-smooth localization of $B$}
\] 
suppose that
 there exist a smooth $A$-algebra $S'$ and a factorization $
A \to B \to S' \to V'$. Then
 there exist a smooth $A$-algebra $S$ and a factorization $
A \to B \to S \to V$. 
In particular, there exist a smooth $A$-algebra $S$ and a factorization $A \to B \to S \to V$   if  there exist a smooth $A$-algebra $\hat S$ and a factorization $A \to B \to {\hat S} \to {\hat V}$,  ${\hat V}$ being the completion of $V$. 
\end{Proposition}

\begin{proof} By hypothesis $H_{B/A}V\not =0$ and let $b\in H_{B/A}V$, $b\not =0$. Let  $B \cong
A[Y]/I$, $Y =
(Y_1\ldots, Y_m)$, $I$ being  a finitely generated ideal. Changing $A$ by $A[Z]$, $B$ by $B[Z]$, the map $B[Z]\to V$ being given by $Z\to b$, we may assume that $b$ comes in fact from $A$. Indeed, if $S$ is given for $B$, let us say as in (1) then $S[Z]$ could be taken for $B[Z]$ as in (1). Similarly as in  \cite[Lemma 4]{ZKPP} we may
assume that   for some polynomials $ f = (f_1,\ldots,f_r)$
from $I$, we have  $b\in NM B$ for some $N\in ((f):I)$ and a $r\times r$-minor $M$ of the Jacobian matrix $(\partial f_i/\partial Y_j)$. Thus we may assume $b$ is standard for $B$ over $V$, which is necessary later to apply Lemma \ref{k}.
 Note that the composite map $B\to V\to V/b^3V\cong V'/b^3V'$ factors through a smooth $A/b^3A$-algebra.  By Lemma \ref{k} $B\to V$  factors through a smooth $A$-algebra as well.
 
 Indeed,   
 since $V/b^3V \cong V'/b^3V'$, Lemma \ref{lift-lem} supplies a smooth $A$-algebra $S'_0$, an $s' \in S'$, a factorization $A \to S'_0 \to V$ that sends $s'$ to a unit multiple of $b$ in $V$, and a factorization
\[
\xymatrix@C=10pt@R=0pt{
     & B/b^3B \ar[r] & S'/b^3S' \ar[rd] \ar[dd]_-{b\, \mapsto\, s'} & \\
      A \ar[ru]\ar[rrd] & && V/b^3V. \\ 
    &  &   S'_0/s'^3S'_0 \ar[ru] &
}
\]
The local ring of $S_0'$ at the preimage of the maximal ideal of $V$ is a domain (see \cite[033C]{SP}, ) and $s'$ is nonzero in this local ring, so it is a nonzerodivisor there. Thus, Lemma \ref{k} applies and supplies a smooth $S'_0$-algebra $S''$ with a factorization $A \to B \to S'' \to V$. Note that $S''$ is a  smooth $A$-algebra.
\hfill\ \end{proof}

To draw further consequences, we will use the following well-known result of Nagata (see \cite[Theorem 4]{Na} or \cite[053E]{SP}).

\begin{Lemma} \label{fp-lemma}
Any finitely generated, flat (equivalently, torsion free) algebra over a valuation ring is finitely presented. 
\end{Lemma}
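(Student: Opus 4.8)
The plan is to reduce the assertion to a finiteness statement about ideals in a polynomial ring over $V$, dispose of the easy part using that flat over a valuation ring means torsion free, and then run an inductive Hilbert‑basis argument.

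First I would write $A\cong P/I$ with $P=V[Y_1,\dots,Y_n]$; finite generation of $I$ is exactly finite presentation of $A$ over $V$, so that is what I aim for. Put $K=\Frac V$ and let $I_K=I\cdot K[Y_1,\dots,Y_n]$ be the extension of $I$ along $P\hookrightarrow K[Y_1,\dots,Y_n]=P\otimes_V K$. The first point is the clean identity
\[
I=I_K\cap P.
\]
The inclusion $\subseteq$ is clear, and applying the exact functor $-\otimes_V K$ — which, $K$ being $V$‑flat, also commutes with the intersection taken inside $K[Y_1,\dots,Y_n]$ — shows that $(I_K\cap P)/I$ is killed by $-\otimes_V K$, i.e.\ is a $V$‑torsion module; but it is a $V$‑submodule of $P/I=A$, which is torsion free because $A$ is $V$‑flat over the valuation ring $V$, and a module that is at once torsion and torsion free vanishes. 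Hence $I=I_K\cap P$, and $I_K$ is finitely generated since $K[Y_1,\dots,Y_n]$ is Noetherian.

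Thus the whole problem comes down to the claim that the contraction $\mathfrak a\cap V[Y_1,\dots,Y_n]$ of any finitely generated ideal $\mathfrak a\subseteq K[Y_1,\dots,Y_n]$ is again finitely generated, which I would prove by induction on $n$. The case $n=0$ is trivial, and $n=1$ is the model case: $K[Y]$ is a PID, so $\mathfrak a=(f)$ with $f$ chosen in $V[Y]$ and primitive (divide out a coefficient of least value), and Gauss's lemma over the GCD domain $V$ (a product of primitive polynomials is primitive) together with the usual content computation yields $\mathfrak a\cap V[Y]=fV[Y]$, which is even principal. For the step $n-1\to n$ one writes $V[Y_1,\dots,Y_n]=R[Y]$ with $R=V[Y_1,\dots,Y_{n-1}]$ and imitates the proof of the Hilbert basis theorem relative to the coefficient ring $R$: for each $d$ one forms the ideal in $R$ of leading $Y$‑coefficients of the elements of $\mathfrak a\cap R[Y]$ of $Y$‑degree $\le d$, compares it with the corresponding leading‑coefficient ideal of $\mathfrak a$ inside the Noetherian ring $R\otimes_V K$, and invokes the inductive hypothesis to see it is finitely generated, assembling finitely many generators of $\mathfrak a\cap R[Y]$ as usual.

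The genuine obstacle — indeed the real content — is precisely this last step: because $R$ is far from Noetherian, the naive leading‑coefficient ideals need not be saturated with respect to $V$‑torsion, so the classical ``subtract a monomial multiple to lower the $Y$‑degree'' reduction has to be set up carefully, exploiting that $\mathfrak a\cap R[Y]$ is $V$‑saturated — equivalently, that $A$ is torsion free. Organizing this bookkeeping is exactly Nagata's theorem, so in the final write‑up I would either carry it out in detail or simply invoke \cite[Theorem~4]{Na} (equivalently \cite[053E]{SP}); an alternative, heavier route proceeds via the flattening techniques of Raynaud--Gruson.
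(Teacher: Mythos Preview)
The paper does not actually prove this lemma: it is stated as a ``well-known result of Nagata'' and referenced to \cite[Theorem~4]{Na} and \cite[053E]{SP}, with no argument given. Your proposal ends up in the same place---you explicitly say you would invoke \cite[Theorem~4]{Na} or \cite[053E]{SP} for the hard inductive step---so there is nothing to compare at the level of the final justification.

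That said, the extra scaffolding you provide is correct and worth keeping. The identification $I = I_K \cap P$ via torsion-freeness of $A = P/I$ is the right reduction, and the $n=1$ case via Gauss's lemma over the B\'ezout domain $V$ is clean. You are also right to flag the inductive step as the genuine content: the leading-coefficient ideals $L_d \subset R = V[Y_1,\dots,Y_{n-1}]$ of elements of $\mathfrak a \cap R[Y]$ need not coincide with the contractions of the corresponding leading-coefficient ideals of $\mathfrak a$ in $K[Y_1,\dots,Y_{n-1}]$ (clearing denominators only gives $vc \in L_d$, not $c \in L_d$), so the inductive hypothesis does not apply to them directly. Nagata's argument circumvents this by a more careful degree-lowering that exploits the valuation, and your honest acknowledgement that ``organizing this bookkeeping is exactly Nagata's theorem'' is accurate. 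If you want a self-contained write-up rather than a citation, follow the Stacks Project tag 053E or Nagata's original; otherwise your proposal, as written, matches the paper's treatment (a citation) while adding a useful outline of why the result is plausible.
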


\begin{Corollary} \label{descend-from-hat}
For a local injection $V \to V'$ of valuation rings that induces a separable extension $K'/K$ of fraction fields, if the map $V \to \tilde{V'}$ is ind-smooth,  ${\tilde V}'$ being the completion of $V'$, then so is $V \to V'$.
\end{Corollary}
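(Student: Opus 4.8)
The plan is to use Lemma~\ref{factor-through}: to show $V\to V'$ is ind-smooth it suffices to take an arbitrary finitely presented $A=V$-algebra $B$ with a map $B\to V'$ and refine it through a smooth $V$-algebra. First I would compose to get $B\to V'\to \tilde{V'}$, and invoke the hypothesis that $V\to\tilde{V'}$ is ind-smooth, so (again by Lemma~\ref{factor-through}) this factors as $B\to S\to\tilde{V'}$ with $S$ a smooth $V$-algebra. Localizing $S$ at the preimage of the maximal ideal of $\tilde{V'}$, and using \cite[033C]{SP} as in the proof of Proposition~\ref{pass-to-hat}, I may arrange that $S$ (or the relevant local ring) is an integral domain.

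The key point is then to apply Proposition~\ref{pass-to-hat} with the base ring ``$A$'' taken to be $V$, the valuation ring taken to be $V'$, and $\tilde V$ in that proposition being $\tilde{V'}$. For this I must check the standing hypothesis of that proposition, namely that $B\to K'$ factors through some $V$-smooth localization of $B$. This is where the separability assumption on $K'/K$ enters: since $K'/K$ is separable it is ind-smooth, hence $V\to K'$ is ind-smooth, so the composite $B\to V'\to K'$ factors as $B\to B_b\to K'$ with $B_b$ a $V$-smooth localization of $B$ for some $b\in B$ — exactly the hypothesis Proposition~\ref{pass-to-hat} requires. Moreover $V'\to K'$ being injective, $b$ maps to a nonzero, hence nonzerodivisor, element of $V'$, so the set-up of Proposition~\ref{pass-to-hat} is in force.

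With these inputs Proposition~\ref{pass-to-hat} produces a smooth $V$-algebra $S'$ and a factorization $V\to B\to S'\to V'$, which is precisely what Lemma~\ref{factor-through} needs in order to conclude that $V\to V'$ is ind-smooth. One should also note that $V\to V'$ is automatically flat (a torsion-free algebra over a valuation ring), so there is no separate flatness issue to address; and by Lemma~\ref{fp-lemma} finite presentation hypotheses are not an obstruction either.

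The main obstacle I anticipate is purely bookkeeping: matching up the roles of the rings so that the hypotheses of Proposition~\ref{pass-to-hat} are literally satisfied — in particular verifying the ``factors through an $A$-smooth localization of $B$'' clause from separability of $K'/K$, and ensuring the domain hypothesis on $S$ can be met after localization. No genuinely new idea beyond Proposition~\ref{pass-to-hat} and the observation that separability makes $V\to K'$ ind-smooth should be needed.
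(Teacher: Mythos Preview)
Your overall strategy matches the paper's: apply Proposition~\ref{pass-to-hat} to each relevant $B$ and conclude via Lemma~\ref{factor-through}. However, your justification of the key hypothesis of Proposition~\ref{pass-to-hat}---that $B\to K'$ factors through a $V$-smooth localization $B_b$---does not go through as written. Ind-smoothness of $V\to K'$ only guarantees that $B\to K'$ factors through \emph{some} smooth $V$-algebra $T$; there is no reason $T$ should be a localization of $B$. Concretely, if $B=V[x]/(x^2)$ with $x\mapsto 0$ in $V'$, then no localization of $B$ is $V$-smooth at all, yet $B\to K'$ certainly factors through the smooth $V$-algebra $V$.

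The paper repairs this by restricting to finite type $V$-subalgebras $B\subset V'$: this suffices for Lemma~\ref{factor-through} (any finitely presented $B$ with a map to $V'$ may first be replaced by its image), and such $B$ are automatically finitely presented by Lemma~\ref{fp-lemma}. Then $B$ is a domain with $\Frac(B)\subset K'$, so $\Frac(B)/K$ is a finitely generated separable extension, and generic smoothness (the smooth locus of a finite type $K$-algebra with separable function field is nonempty, via the EGA references in the paper) produces a nonzero $b\in B$ with $B_b$ smooth over $V$. With this correction in place your argument is complete and essentially identical to the paper's.
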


\begin{proof}
The separability assumption and Lemma \ref{fp-lemma} imply that Proposition \ref{pass-to-hat} applies to every finite type $V$-subalgebra $B \subset V'$: a limit argument reduces to showing that the smooth locus of $B$ over $V$ is nonempty, which follows from the separability of $\Frac(B)/K$ thanks to \cite[(6.7.4.1) in IV2]{EGA} and \cite[(17.5.1) in IV4]{EGA}. It then remains to apply Lemma \ref{factor-through}. 
\hfill\ \end{proof}

The work above allows us to relate certain ``formal desingularization'' extensions of valuation rings studied in \cite[section 6]{Po2} to ``weak desingularization'' (that is, ind-smooth) extensions as follows.

\begin{Proposition} \label{fd-ext}
Fix a local injection $V \to V'$ of valuation rings with fraction fields $K \to K'$ such that $\val(V\setminus \{ 0\})$ is cofinal in $\val(V'\setminus \{ 0\})$ and for each $0 \neq v \in V$ the map $V/vV \to V'/vV'$ is ind-smooth, a finitely presented $A$-algebra $B$, and maps $V \to B \to V'$ such that the map $B \to K'$ factors through some $V$-smooth localization of $B$. There is a smooth $V$-algebra $S$ and a factorization $V \to B \to S \to V'$.  If, in addition, $K'/K$ is separable, then $V'$ is ind-smooth over $V$.
\end{Proposition}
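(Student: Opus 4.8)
\textbf{Proof proposal for Proposition \ref{fd-ext}.}

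The plan is to verify the hypotheses of Proposition \ref{pass-to-hat} for the map $V \to B \to V'$, using $\tilde V$ as a stepping stone, and then to bootstrap from the single-factorization statement to ind-smoothness via Lemma \ref{factor-through} and Corollary \ref{descend-from-hat}. First I would pass to the completion: the hypotheses that $\val(V\setminus\{0\})$ is cofinal in $\val(V'\setminus\{0\})$ and that $V/vV \to V'/vV'$ is ind-smooth for every nonzero $v \in V$ are precisely the conditions defining the ``formal desingularization'' setup of \cite[section 6]{Po2}, so I would invoke the results there to conclude that $V \to \tilde{V'}$ is ind-smooth, where $\tilde{V'}$ is the completion of $V'$. (Concretely: $\tilde V \to \tilde{V'}$ should be immediate after completing, so by the immediate-extension desingularization discussed in the introduction — the first major step, which rests on Lemma \ref{k} — it is ind-smooth; and $V \to \tilde V$ is ind-smooth, e.g.\ because $V$ is a filtered limit of smooth subalgebras via Theorem \ref{z} after reducing to the equicharacteristic-zero situation, or directly.)

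Next, given the map $V \to B \to V'$ with $B$ finitely presented and $B \to K'$ factoring through a $V$-smooth localization of $B$, I would compose with $V' \to \tilde{V'}$ to get $V \to B \to \tilde{V'}$. Since $V \to \tilde{V'}$ is ind-smooth, Lemma \ref{factor-through} refines this to $V \to B \to \tilde S \to \tilde{V'}$ with $\tilde S$ smooth over $V$; localizing $\tilde S$ at the preimage of the maximal ideal of $\tilde{V'}$ and using \cite[033C]{SP} I may arrange $\tilde S$ to be an integral domain. Now Proposition \ref{pass-to-hat} applies verbatim (with $A = V$, the valuation ring being $V'$, and its completion $\tilde{V'}$): it produces a smooth $V$-algebra $S$ and a factorization $V \to B \to S \to V'$, which is the first assertion.

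For the final assertion, assume in addition that $K'/K$ is separable. The point is that the first assertion establishes exactly the criterion of Lemma \ref{factor-through} for the set $\mathcal S$ of smooth $V$-algebras — \emph{provided} every finitely presented $V$-morphism $B \to V'$ has $B \to K'$ factoring through a $V$-smooth localization of $B$. This is where separability enters: by Lemma \ref{fp-lemma} every finite type $V$-subalgebra of $V'$ is automatically finitely presented and flat, a limit argument reduces to checking that the smooth locus of $B_K$ is nonempty, and separability of $\Frac(B)/K \subset K'/K$ gives this by the generic smoothness results \cite[(6.7.4.1) in IV2]{EGA}, \cite[(17.5.1) in IV4]{EGA} (exactly as in the proof of Corollary \ref{descend-from-hat}). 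Hence Lemma \ref{factor-through} yields that $V \to V'$ is ind-smooth.

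The main obstacle I anticipate is the very first step — establishing that $V \to \tilde{V'}$ is ind-smooth. The single-factorization conclusion of Proposition \ref{pass-to-hat} needs a map all the way to a \emph{smooth} $V$-algebra into $\tilde{V'}$, so one really does need the immediate-extension desingularization theorem (the hard technical core, built on Lemma \ref{k}) applied to $\tilde V \subset \tilde{V'}$, together with care that completing $V'$ does not disturb the cofinality/quotient hypotheses and that $\tilde V \to \tilde{V'}$ is genuinely immediate. Everything after that is a formal assembly of Proposition \ref{pass-to-hat}, Lemma \ref{fp-lemma}, Lemma \ref{factor-through}, and the generic-smoothness input already used for Corollary \ref{descend-from-hat}.
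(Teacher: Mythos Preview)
Your plan routes everything through the completion $\tilde{V'}$ and then invokes Proposition~\ref{pass-to-hat}, but the crucial first step---showing that $V \to \tilde{V'}$ is ind-smooth---does not go through as you sketch it. You claim that $\tilde V \to \tilde{V'}$ ``should be immediate after completing,'' yet the hypotheses of Proposition~\ref{fd-ext} say only that $\val(V\setminus\{0\})$ is \emph{cofinal} in $\val(V'\setminus\{0\})$, not that the value groups coincide; completion does not change value groups, so $\tilde V \subset \tilde{V'}$ need not be immediate at all (take, e.g., $\Gamma = {\bf Z} \subset \tfrac{1}{2}{\bf Z} = \Gamma'$). Even when it is immediate, the immediate-extension desingularization you appeal to (Proposition~\ref{p0}) requires that the valuation rings contain ${\bf Q}$ and have value group in ${\bf R}$, neither of which is assumed here. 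The same objection applies to your claim that $V \to \tilde V$ is ind-smooth. So the detour through $\tilde{V'}$ both imports hypotheses the proposition does not have and rests on an unjustified ``immediate'' assertion.

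The paper's proof is entirely different and stays at finite level: pick a standard element $b \in B$ over $V$ whose image in $V'$ is nonzero (Lemma~\ref{std-lem}); use cofinality to choose $v \in V$ with $\val(v) > \val(b)$; use the assumed ind-smoothness of $V/v^3V \to V'/v^3V'$ to factor $B \to V'/v^3V'$ through a smooth $V/v^3V$-algebra $\bar S$; lift $\bar S$ to a smooth $V$-algebra and then, via Lemma~\ref{lift-lem}, lift the map $\bar S \to V'/v^3V'$ to a map $S' \to V'$ from a smooth $V$-algebra; finally apply Lemma~\ref{k} directly with $A' = S'$. No completion, no immediate-extension theorem, no characteristic-zero input. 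Your treatment of the separable clause at the end is fine and matches the paper; the problem is solely in how you produce the factorization $B \to S \to V'$.
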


\begin{proof}
In the case when $K'/K$ is separable, $B$ could be any finite type $V$-subalgebra of $V'$, which is finitely presented by Lemma \ref{fp-lemma}. So the last assertion follows from the rest and Lemma \ref{factor-through}. For the assertion about $B$, we use Lemma \ref{std-lem} to choose an  element $b \in B$ standard over $V$  that does not die in $V'$. We assume that $b$ is not a unit in $V'$ (or else we may set $S = B_b$) and we choose a $0 \neq v \in V$ with $\val(v) > \val(b)$,
where $\val(b)$ denotes the valuation of $b$ considered in $V'$. By our assumptions, there are a smooth $V/v^3V$-algebra $\bar{S}$, an $s \in \bar{S}$, and a factorization $V \to B \to \bar{S} \xrightarrow{s\, \mapsto\, v} V'/v^3V'$ such that $b \mid s$ in $\bar{S}$. Thus, since $\bar{S}$ lifts to a smooth $V$-algebra (see \cite[(1.3.1)]{Ara} or  \cite[07M8]{SP}), Lemma \ref{lift-lem} supplies a smooth $V$-algebra $S'$, an $s' \in S'$, a factorization $V \to S' \to V'$ that sends $s'$ to a unit multiple of $v$, and a factorization 
$
V \to B \to \bar{S} \xrightarrow{s\, \mapsto\, s'} S'/s'^3S' \to V'/v^3V'.
$
Since $b\mid s'$ in $S'/s'^3S'$, by replacing $S'$ by its localization by an element of $1 + s'^2S'$ if necessary we may ensure that $a \mid s'$ in $S'$ for some lift $a \in S'$ of $b \in S'/s'^3S'$. Then we  have a factorization
\[
V \to B \xrightarrow{b\, \mapsto\, a} S'/a^3S' \to V'/a^3V'.
\]
As in the proof of Lemma \ref{pass-to-hat}, Lemma \ref{k} then supplies a  smooth $V$-algebra $S$. 
\hfill\ \end{proof}

The following localization lemma, a variant of \cite[Lemma 2]{P'}, \cite[(12.2)]{S}, or \cite[07F9]{SP}, will permit us to localize our valuation rings when arguing their ind-smoothness. %In situations when the sought conclusion is ind-smoothness, these references already suffice for this purpose.

\begin{Lemma} \label{delocalize}
For ring maps $A \to B \to V$ with $B$ of finite type over $A$, a prime $\Pp \subset V$ with preimage $\pp \subset A$, and a factorization $A \to B \to S' \to V_{\Pp}$ for a finitely presented $A_{\pp}$-algebra $S'$, there are a finitely presented $A$-algebra $S$, an $s \in S$ with $S_{s} \tensor_A A_{\pp} \simeq S'[X, X^{-1}]$, and a  factorization
\[
 A \to B \to S \to V \ \ \mbox{such that} \ \ S \to V_{\Pp} \ \ \mbox{factors as} \ \ S \to S_{s} \tensor_A A_{\pp} \to  V_{\Pp}.
\] 
\end{Lemma}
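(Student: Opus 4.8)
The plan is to spread out over $A$ all of the finitely presented data that a priori lives only over $A_\pp$. Two things obstruct this: a presentation of $S'$ has coefficients in $A_\pp$, not $A$; and the structure map $S'\to V_\Pp$ takes values in $V_\Pp$, not $V$. Both are dealt with by clearing denominators and then homogenizing, and the homogenization is exactly what forces the extra factor $X$ into $S'[X,X^{-1}]$. So I would first fix a presentation $S'\cong A_\pp[Y_1,\dots,Y_n]/(g_1,\dots,g_m)$, set $d=\max_i\deg g_i$, and record the images $\eta_j\in V_\Pp$ of the $Y_j$, the images $\beta_i\in S'$ of a chosen finite set of $A$-algebra generators of $B$, and the images $x_i\in V$ of those generators under $B\to V$; by hypothesis $g_i(\eta)=0$ in $V_\Pp$ and $B\to S'\to V_\Pp$ coincides with $B\to V\to V_\Pp$. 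Since $V$ is a valuation ring and $\Pp$ is prime there is a single $w\in V\setminus\Pp$ with $v_j:=w\eta_j\in V$ for all $j$, and since only finitely many elements of $A_\pp$ occur as coefficients of the $g_i$ and of polynomials representing the $\beta_i$, there is a single $f\in A\setminus\pp$ clearing them.

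Next I would homogenize: put $G_i(Y,T)=T^{d}g_i(Y_1/T,\dots,Y_n/T)$, a genuine polynomial in $A_f[Y,T]$ because $\deg g_i\le d$; clear the remaining $f$-denominators to get $\hat G_i\in A[Y,T]$; and set $S_0=A[Y_1,\dots,Y_n,T]/(\hat G_1,\dots,\hat G_m)$, a finitely presented $A$-algebra, with $s\in S_0$ the image of $fT$. The substitution $Y_j=TZ_j$ identifies $(S_0)_s\tensor_A A_\pp$ with $A_\pp[Z,T,T^{-1}]/(g_1(Z),\dots,g_m(Z))=S'[T,T^{-1}]$, which is the asserted isomorphism with $X:=T$. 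Furthermore $Y_j\mapsto v_j$, $T\mapsto w$ sends each $\hat G_i$ to a unit multiple of $w^{d}g_i(\eta)$, which is $0$ in $V_\Pp$ and hence in $V$ because $V\hra V_\Pp$; this gives an $A$-morphism $S_0\to V$, and as $s$ maps to $\bar f w\in V\setminus\Pp$, a unit in $V_\Pp$, the composite $S_0\to V\to V_\Pp$ automatically factors through $(S_0)_s\tensor_A A_\pp=S'[T,T^{-1}]$ and restricts there to the given map $S'\to V_\Pp$.

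Finally I would fold in $B$, where one may assume $B$ finitely presented over $A$ (in the applications $B$ comes from Lemma \ref{factor-through} and already is; otherwise one first reduces to this case). The surjection $A_\pp[Z_1,\dots,Z_n,X_1,\dots,X_k,T,T^{-1}]\surjects S'[T,T^{-1}]$ sending $Z_j$ to the chosen generators of $S'$, $X_i\mapsto\beta_i$, $T\mapsto T$, has finitely generated kernel because its target is finitely presented over $A_\pp$; clearing denominators and negative powers of $T$ in a finite generating set of that kernel, together with a finite generating set of $J=\ker(A[X_1,\dots,X_k]\to B)$, yields finitely many $\hat\rho_1,\dots,\hat\rho_q\in A[Y,X,T]$, and I would take $S=A[Y,X,T]/(\hat\rho_1,\dots,\hat\rho_q)$. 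Inverting $fT$ and tensoring with $A_\pp$ recovers $S'[T,T^{-1}]$, now with $X_i\mapsto\beta_i$, so $S_s\tensor_A A_\pp\simeq S'[X,X^{-1}]$ compatibly with $B\to S'$; the assignment $Y_j\mapsto v_j$, $X_i\mapsto x_i$, $T\mapsto w$ extends to a morphism $S\to V$ exactly as before (each $\hat\rho_l$ dies in $V_\Pp$ through $S'[T,T^{-1}]\to V_\Pp$ and the compatibility of $B\to S'$ with $B\to V$, hence in $V$); and $X_i\mapsto X_i$ defines $B\to S$ because the generators of $J$ are among the relations, with $B\to S\to V$ the original map $B\to V$.

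The conceptual heart is the homogenization step, which converts the $V_\Pp$-valued, $A_\pp$-coefficient data into $V$-valued, $A$-coefficient data; the only real difficulty I anticipate is the bookkeeping that keeps everything finitely presented through the successive clearing of denominators, and in particular the preliminary reduction to the case of finitely presented $B$.
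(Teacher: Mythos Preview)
Your approach is essentially the paper's: homogenize a presentation of $S'$ with respect to a new variable to clear the localizations coming from $A_\pp$ and $V_\Pp$. The paper, however, organizes this more cleanly by presenting $S'$ from the outset as a $(B\otimes_A A_\pp)$-algebra rather than an $A_\pp$-algebra. Concretely, using \cite[00F4]{SP} one writes
\[
S'\simeq (B\otimes_A A_\pp)[X_1,\dots,X_n]/(f_1,\dots,f_m)
\]
with the $f_i$ having coefficients in $B$, and then sets
\[
S=B[X_0,X_1,\dots,X_n]/\bigl(X_0^N f_1(X_1/X_0,\dots,X_n/X_0),\dots,X_0^N f_m(X_1/X_0,\dots,X_n/X_0)\bigr),\qquad s=X_0.
\]
This makes $S$ a $B$-algebra by construction, so your entire ``fold in $B$'' step disappears; that step is where your write-up becomes shaky (the switch between the $Y$ and $Z$ coordinate systems is not carried through consistently, and you need $B$ finitely presented without really reducing to that case).

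One further difference: you use $V\hookrightarrow V_\Pp$ (i.e., $V$ a domain) to conclude that the homogenized relations vanish in $V$. The paper avoids this by noting instead that an element of $V$ that dies in $V_\Pp$ is annihilated by some $t'\in V\setminus\Pp$, and then sending $X_0\mapsto t't$, $X_i\mapsto t'x_i$; homogeneity makes the relations pick up a factor of $(t')^N$ and hence vanish in $V$. This works for arbitrary $V$, though in the applications $V$ is indeed a valuation ring and your shortcut is harmless.
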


\begin{proof}
Following the argument of \cite[(12.2)]{S}, we choose a presentation
\[
S' \simeq (B \tensor_A A_{\pp})[X_1, \dots, X_n]/(f_1(X_1, \dots, X_n), \dots, f_m(X_1, \dots, X_n))
\]
(see \cite[00F4]{SP}) in which the polynomials $f_i$ have coefficients in $B$, and we set
\[
S = B[X_0, X_1, \dots, X_n]/(X_0^Nf_1(X_1/X_0, \dots, X_n/X_0), \dots, X_0^Nf_m(X_1/X_0, \dots, X_n/X_0))
\]
for a large enough $N > 0$ for which each $X_0^Nf_i(X_1/X_0, \dots, X_n/X_0)$ is a (necessarily homogeneous) polynomial in $X_0, X_1, \dotsc, X_n$ of positive degree and coefficients in $B$. We set $s = X_0$, so that a desired isomorphism $S_{s} \tensor_A A_{\pp} \simeq S'[X, X^{-1}]$ is induced by the change of variables $X_0 \mapsto X$ and $X_i \mapsto XX_i$ for $1 \leq i \leq n$. To build the map $S \to V$, we first choose $x_1, \dots, x_n \in V$ and $t \in V \setminus \Pp$ such that $X_i$ maps to $x_i/t \in V_{\Pp}$. Continuing to use abusive notation for homogeneous polynomials, we note that the (``homogeneous'' in $t, x_1, \dots, x_n$) elements $t^Nf_i(x_1/t, \dots, x_n/t)$ of $V$ die in $V_{\Pp}$, so they are killed by some $t' \in V \setminus \Pp$. Thus, the $B$-morphism
\[
B[X_0, X_1, \dots, X_n] \to V \ \ \mbox{given by} \ \ X_0 \mapsto t't, X_1 \mapsto t'x_1, \dots, X_n \mapsto t' x_n
\]
factors through $S$. By construction, the resulting morphism $S \to V_{\Pp}$ factors through the localization $S_{s} \tensor_A A_{\pp}$ of $S$, as desired.
\hfill\ \end{proof}

We are ready for the promised reduction to complete, one-dimensional valuation rings.

\begin{Proposition} \label{reduce-to-ht-1}
Consider the following property of a valuation ring $V$ and a subring $A \subset V$:

$(*)$ every $A \to B \to V$ with $B$ a finite type $A$-algebra such that $B \to \Frac(V)$ factors through  
 an $A$-smooth localization of $B$ has a refinement $A \to B \to S \to V$ with $S$ smooth over $A$.

For a finite dimensional valuation ring $V$ with a subfield $A\subset V$, if for all consecutive primes $\qq' \subset \qq \subset V$ the complete height one valuation ring $\widetilde{(V/\qq')_{\qq}}$ satisfies $(*)$, then so does $V$.
\end{Proposition}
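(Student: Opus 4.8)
The plan is to prove Proposition~\ref{reduce-to-ht-1} by induction on $\dim V$, peeling off one prime at a time and using the machinery of the previous lemmas to glue a desingularization over a localization back to one over $V$ itself. If $\dim V = 0$ then $V$ is a field, and $(*)$ holds trivially (take $S = B_b$ for a standard $b$ surviving in $V$), so assume $\dim V = d \ge 1$ and that the statement is known for valuation rings of smaller dimension. Let $\qq \subset V$ be the unique height-one prime and $\qq' = 0$; more generally the relevant consecutive pair at the bottom is $0 \subset \qq$, and the hypothesis gives us that $\widetilde{V_{\qq}}$ satisfies $(*)$ (here $V_{\qq}$ is the rank-one valuation ring obtained by localizing at $\qq$). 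The quotient $V/\qq$ is a valuation ring of dimension $d - 1$, still containing the field $A$, and for its consecutive primes $\overline{\qq}{}' \subset \overline{\qq}$ the ring $\widetilde{(V/\qq)_{\overline{\qq}} / \overline{\qq}{}'}$ agrees with one of the rings $\widetilde{(V/\qq_1)_{\qq_2}}$ coming from consecutive primes $\qq_1 \subset \qq_2$ of $V$ lying above $\qq$, hence satisfies $(*)$; so by induction $V/\qq$ (with the subfield $A$) satisfies $(*)$.

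Now I would start with data $A \to B \to V$ as in $(*)$, with $b \in B$ standard over $A$ not dying in $V$ (Lemma~\ref{std-lem}); we may assume $b$ is a nonunit in $V$. First use Lemma~\ref{delocalize} with $\Pp = \qq$ to reduce the problem over $V_{\qq}$: desingularize $A \to B \to V_{\qq}$. The ring $V_{\qq}$ is a rank-one valuation ring, and by Corollary~\ref{descend-from-hat} (the fraction field extension is an equality, hence separable) it suffices to desingularize $A \to B \to \widetilde{V_{\qq}}$, which is possible because $\widetilde{V_{\qq}}$ satisfies $(*)$ by hypothesis. This produces a smooth $A$-algebra, and tracing back through Lemma~\ref{delocalize} gives a finitely presented $A$-algebra $S_1$ with a factorization $A \to B \to S_1 \to V$ such that $S_1 \to V_{\qq}$ factors through a smooth locus; equivalently, we now have reduced to the case where the given $b \in B$ already becomes a unit after inverting the elements of $V \setminus \qq$, i.e. $\val(b) \in \Gamma$ is ``$\qq$-small.''

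At this point the obstruction to smoothness is concentrated in $V/\qq V$: choose $0 \neq v \in V$ with $v \in \qq$ and $\val(v)$ large compared to $\val(b)$ — more precisely so that $b \mid v$-type estimates hold and $b$ becomes a unit times a power condition mod $v^3$. Passing to $V/v^3 V$, which is a quotient controlled by the $(d-1)$-dimensional valuation ring $V/\qq$ together with the rank-one data near $\qq$, one wants to invoke the inductive desingularization of $V/\qq$ (and a base-change/completion step analogous to Proposition~\ref{fd-ext}) to produce a smooth $V/v^3V$-algebra $\bar S$, an $s \in \bar S$ with $b \mid s$, and a factorization $A \to B \to \bar S \xrightarrow{s \mapsto v} V/v^3V$. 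Lift $\bar S$ to a smooth $A$-algebra, apply Lemma~\ref{lift-lem} to get a smooth $A$-algebra $S'$ with $s' \mapsto uv$ in $V$ and a compatible factorization mod $s'^3$, localize by an element of $1 + s'^2 S'$ so that some lift $a \in S'$ of $b$ divides $s'$ (hence $a \mid v$, so $a$ is a nonzerodivisor in $V$ lying in the maximal ideal), obtaining $V \to B \xrightarrow{b \mapsto a} S'/a^3 S' \to V/a^3 V$. Finally Lemma~\ref{k} applies (exactly as at the end of the proofs of Proposition~\ref{pass-to-hat} and Proposition~\ref{fd-ext}) and yields a smooth $A$-algebra $S$ with $A \to B \to S \to V$; a limit argument (Lemma~\ref{fp-lemma} is not needed since $B$ is already finitely presented) keeps it finitely presented.

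The main obstacle I anticipate is the middle step: cleanly setting up the ``mod $v^3$'' desingularization. One must show that $(*)$ for the $(d-1)$-dimensional ring $V/\qq$ (together with the rank-one information near $\qq$ encoded in $\widetilde{V_{\qq}}$) really does produce a smooth $V/v^3V$-algebra through which $B \to V/v^3V$ factors with $b \mid s$ — this requires identifying $V/v^3V$ appropriately, checking the cofinality-type hypotheses that let one run a Proposition~\ref{fd-ext}-style argument, and arranging that the standard element $b$ survives and divides the relevant parameter. Everything after that (lifting $\bar S$, applying Lemmas~\ref{lift-lem} and~\ref{k}) is by now a well-rehearsed routine in this paper, and the reduction over $V_{\qq}$ via Lemma~\ref{delocalize} and Corollary~\ref{descend-from-hat} is straightforward.
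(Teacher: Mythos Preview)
Your overall strategy—induction on $\dim V$, first desingularizing over the rank-one localization $V_{\qq}$ and then invoking the inductive hypothesis on $V/\qq$—can be made to work, but your middle step is both more complicated than necessary and, as written, incorrect. After Step~1 you have arranged that the standard element $b$ maps to $\mm_V \setminus \qq$. The key observation you are missing is that in a valuation ring any two ideals are comparable, and since $b^3 \notin \qq$ (because $\qq$ is prime) one has $\qq \subsetneq b^3V$; hence $V/b^3V$ is already a quotient of $V/\qq$. So there is no need to introduce an auxiliary $v \in \qq$ and work modulo $v^3$ (which would give a ring mapping \emph{onto} $V/\qq$, the wrong direction). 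Instead, the inductive factorization $A \to B \to S \to V/\qq$ composes directly with $V/\qq \twoheadrightarrow V/b^3V$, placing you exactly in the setup for Lemma~\ref{lift-lem} (with $s$ the image of $b$ in $S$) followed by Lemma~\ref{k}. Your attempt to mimic Proposition~\ref{fd-ext} is a red herring: that proposition deals with a relative situation $V \subset V'$ and ind-smoothness of $V/vV \to V'/vV'$, which has no analogue here.

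By contrast, the paper's proof avoids the induction-on-dimension scaffolding entirely. Rather than always peeling off the height-one prime, it locates the \emph{minimal} prime $\qq$ at which smoothness fails, takes its immediate predecessor $\qq'$, and (after localizing via Lemma~\ref{delocalize} to make $\qq$ maximal) observes that a standard $b \in B$ can be chosen with image in $\qq \setminus \qq'$, so that $V/b^3V \cong \widetilde{(V/\qq')}/b^3\widetilde{(V/\qq')}$. One then applies the hypothesis~$(*)$ for the single rank-one ring $\widetilde{(V/\qq')_{\qq}}$ directly—no descent from the completion via Proposition~\ref{pass-to-hat} is needed, since only the quotient modulo $b^3$ matters—and finishes with Lemmas~\ref{lift-lem} and~\ref{k}. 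After this the bad prime has strictly increased, and finitely many iterations suffice. This is cleaner: each pass uses exactly one consecutive pair $\qq' \subset \qq$, matching the hypothesis precisely, and avoids the separate treatment of $V_{\qq}$ versus $V/\qq$.
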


\begin{proof}
We fix a finite type $A$-algebra $B$ equipped with a factorization $A \to B \to V$ as in  $(*)$, which we need to factor further as $A \to B \to S \to V$ for some smooth $A$-algebra $S$.  When $B \to V$ itself factors through an $A$-smooth localization of $B$, there is nothing to show. Otherwise, since $V$ is of finite height, we may choose the minimal prime $\qq \subset V$ whose preimage in $B$ does not lie in the $A$-smooth locus of $\Spec(B)$ and the largest prime $\qq' \subsetneq \qq \subset V$ properly contained in $\qq$ (the assumption in $(*)$ ensures that $\qq'$ exists). Thanks to Lemma \ref{delocalize}, we may replace $V$ by $V_{\qq}$ to reduce to the case when $\qq$ is the maximal ideal (so that $\widetilde{(V/\qq')_{\qq}} = \widetilde{V/\qq'}$): indeed, once we resolve this case, then, by using Lemma \ref{delocalize}, we will be able to refine $B$ to an $A$-algebra that either is smooth or for which $\qq$ is strictly larger, and, by iteration, we will then arrive at a desired $S$.

By Lemma \ref{std-lem}, there is an element $b \in B$ standard over $A$  that maps to $\qq \setminus \qq'$. The property from $(*)$ of $\widetilde{V/\qq'}$ then supplies a smooth $A$-algebra $S'$, an element $s \in S'$ (the image of $b$), and a factorization 
\[
\ \ \xymatrix@R=0pt{
  & B \ar[dd]^{b \, \mapsto\, s} \ar[rd] & \\
   A \ar[ru]\ar[rd] & & \widetilde{V/\qq'}/b^3\widetilde{V/\qq'} \cong V/b^3V. \\ 
 & S'/s^3S' \ar[ru] &
}
\]
Thanks to Lemma \ref{lift-lem}, we may change $S'$ in order to make sure that the map $S'/s^3S' \to V/b^3V$ lifts to an $A$-morphism $S' \to V$. This puts us in a situation in which we may apply Lemma \ref{k} to obtain a smooth $S'$-algebra $S$ with a desired factorization $A \to B \to S \to V$.
\hfill\ \end{proof}

\vskip 0.5 cm

\section{Ind-smoothness of large immediate extensions of valuation rings} \label{large-immediate}

Our next goal is to find a large class of extensions of valuation rings that are ind-smooth. The argument combines classical results from valuation theory that go back to Kaplansky, results from \cite{Po1} (see Lemma \ref{imm-ind-smooth} and its proof), and the desingularization lemmas from Section 2.

Consider the case when $V$ is not noetherian and its associated valuation has rank one.  In the Noetherian case a immediate  extension of valuation rings $V\subset V'$  is  dense,  but in general case it need not be.  If $V\supset {\bf Q}$  the problem is solved  by Ostrowski's Defektsatz  \cite{O} but when the characteristic of the residue field of $V$ is $>0$ the  immediate algebraic extensions present extra difficulties.

An inclusion $V \subset V'$ of valuation rings is an \emph{immediate extension} if it is local as a map of local rings and induces isomorphisms between the value groups and the residue fields of $V$ and $V'$. For such a $V \subset V'$, letting $K'/K$ be the induced fraction field extension, we have $V = V' \cap K$ (see \cite[(4.1) in VI]{Bou}). Moreover, for any subextension $K'/K''/K$ and the valuation ring $V'' = V' \cap K''$, both $V \subset V''$ and $V'' \subset V'$ are then also immediate extensions (to check the value group requirement one uses that any $v'' \in V''$ is a unit if and only if so is its image in $V'$). 

For example, for any valuation ring $V$, the extension $V \subset \tilde{V}$ is immediate (see \cite{S}),  ${\tilde V}'$ being the completion of $V'$.

For a valuation ring $V$ with the fraction field $K$, a sequence $\{v_i \}_{i < \omega}$ in $K$ indexed by the ordinals $i$ less than a fixed limit ordinal $\omega$ is \emph{pseudo-convergent} if 

$\val(v_{i} - v_{i''} ) < \val(v_{i'} - v_{i''} )    \ \ \mbox{(that is,} \ \ \val(v_{i} - v_{i'}) < \val(v_{i'} - v_{i''})  ) \ \ \mbox{for} \ \ i < i' < i'' < \omega$
(see \cite{Kap}, \cite{S}).
A (possibly nonunique) \emph{pseudo-limit} of a pseudo-convergent sequence $\{v_i \}_{i < \omega}$ is an element $\alpha \in K$ with 

$ \val(\alpha - v_{i}) < \val(\alpha - v_{i'}) \ \ \mbox{(that is,} \ \ \val(\alpha -  v_{i}) = \val(v_{i} - v_{i'})) \ \ \mbox{for} \ \ i < i' < \omega$.
A pseudo-convergent sequence $\{v_i\}_{i < \omega}$ in $K$ is 
\begin{enumerate}
\item
\emph{algebraic} if some $f \in K[T]$ satisfies $\val(f(v_{i})) < \val(f(v_{i'}))$ for large enough $ i < i' < \omega$;

\item
\emph{transcendental} if each $f \in K[T]$ satisfies $\val(f(v_{i})) = \val(f(v_{i'}))$ for large enough $i < i' < \omega$.
\end{enumerate}
(Here ``large enough'' means larger than a fixed ordinal $\omega' < \omega$ that is allowed to depend on $f$.)
In both cases, \cite[Theorems 1, 2]{Kap} describe the valuation of $K'$ that extends $V$ of $K$. For instance, in the transcendental case, by \cite[Theorem 2]{Kap}, this valuation on $K(t)$ is given by setting 
\[
\val(((f(t))/(g(t))) = \val(f(v_i)) - \val(g(v_i)) \qxq{for large enough}  i < \omega.
\]
These results lead to \cite[Theorem 4]{Kap}: a valuation ring $V$ has no nontrivial immediate extensions if and only if each pseudo-convergent sequence in its fraction field $K$ has a pseudo-limit in $K$.
If for all $\gamma\in \Gamma$, the value group of $V$, there exists $i<i'$ sufficiently  large such that $\val(v_i-v_{i'})>\gamma$ then we call  
 $\{v_i \}_{i < \omega}$ {\em fundamental}.
 As in \cite[Lemma 3.2]{Po1} we get the following lemma.

\begin{Lemma} \label{imm-ind-smooth}
For an immediate extension $V \subset V'$ of valuation rings and a transcendental pseudo-convergent sequence $(v_i)_{i<\omega}$ in $K$, which has a pseudo-limit $v'$ in $K'$ but no pseudo-limit in $K$ the valuation ring $V''=V'\cap K(v')$  is a filtered  union of smooth $V$-subalgebras. 
\end{Lemma}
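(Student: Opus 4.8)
The plan is to deduce the lemma from the factorization criterion of Lemma~\ref{factor-through} together with the desingularization Lemma~\ref{k}, following the pattern of \cite[Lemma 3.2]{Po1}. Since $(v_i)_{i<\alpha}$ is of transcendental type, \cite[Theorem 2]{Kap} shows that $v'$ is transcendental over $K$, that $K''\colonequals K(v')$ is purely transcendental over $K$, and it pins down the valuation of $K''$ by $\val(f(v')/g(v'))=\val(f(v_i))-\val(g(v_i))$ for $f,g\in K[T]$ and all large $i<\alpha$; in particular $V[v']$ is a polynomial $V$-algebra, hence a smooth $V$-subalgebra of $V''$. After replacing $v'$ by $\lambda v'+\mu$ for suitable $\lambda\in K^*$, $\mu\in K$ (and $(v_i)$ by $\lambda v_i+\mu$, which changes neither $V''$ nor $K''$ nor the hypotheses) and using the residue-field identification coming from immediacy, I may arrange $\val(v')>0$, so that $v_i\in\mm_V$ and $\val(v_i)=\val(v')$ for all large $i$. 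Because $\Frac(B)/K$ is separable for every finite type $V$-subalgebra $B\subset V''$ (a subextension of the purely transcendental $K''/K$), such $B$ are finitely presented by Lemma~\ref{fp-lemma}, and by Lemma~\ref{factor-through} it suffices to show that every such $B$, which I may enlarge so that $v'\in B$, is contained in a smooth $V$-subalgebra of $V''$.

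Given such a $B$, Lemma~\ref{std-lem} and the separability just noted supply a nonzero $b\in B$ standard over $V$; I may assume $b$ is a nonunit in $V''$, as otherwise $B_b$ works. Then $\delta\colonequals\val(b)\in\Gamma$, so $\delta=\val(c)$ for some $c\in\mm_V$. The heart of the argument is to build a $V$-algebra retraction $\rho\colon B\to V/c^3V$ compatible with $B\hookrightarrow V''\to V''/c^3V''$ and $V\to V/c^3V\to V''/c^3V''$ and sending $b$ into $cV^*$. Choosing a presentation $B=V[Y_1,\dots,Y_N]/(g_1,\dots,g_s)$ with $g_l\in V[Y]$ and $Y_{k_0}=v'$, the plan is to define $\rho$ by sending $v'\mapsto v_i$ and each generator, being a rational function $P_k$ of $v'$, to $P_k(v_i)$, for an index $i<\alpha$ large enough that $\val(v'-v_i)$ exceeds a bound $M\in\Gamma$ depending only on $B$, $b$, $c$. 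Kaplansky's valuation formula controls $\val(P(v')-P(v_i))$ for each rational function $P$ occurring, which guarantees that $P_k(v_i)\in V$, that the relations $g_l$ land in $c^3V$ (here one uses $c^3V''\cap V=c^3V$, immediate from $\Gamma_V=\Gamma_{V''}$), and that $b$ maps into $cV''^*\cap V=cV^*$. Replacing $c$ by $\rho(b)$, Lemma~\ref{k}, applied with $A=A'=V$ and $a=\rho(b)$ (a nonzerodivisor lying in $\mm_V\subset\mm_{V''}$), then yields a smooth $V$-algebra $S$ with a factorization $V\to B\to S\to V''$; a limit argument together with $V''$ being a domain lets me replace $S$ by a finite type $V$-subalgebra of $V''$, which by Lemma~\ref{factor-through} completes the proof.

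The step I expect to be the main obstacle is precisely the construction of $\rho$, i.e.\ approximating $v'$ by a term $v_i$ of the sequence to the required finite precision $M$: when $V$ is dense in $V'$ (the Noetherian situation) the quantities $\val(v'-v_i)$ are cofinal in $\Gamma$ and one can simply take $i$ large, but in general the pseudo-convergent sequence need not be fundamental, so one must exploit its transcendental type and Kaplansky's classification, following \cite[Lemma 3.2]{Po1}, to either reduce to the fundamental case or verify directly that the finitely much precision needed is attained after the normalization above. A secondary, routine point is upgrading ``ind-smooth'' to ``filtered increasing union of smooth $V$-subalgebras,'' which is exactly Lemma~\ref{factor-through} applied to the set of smooth $V$-subalgebras of $V''$.
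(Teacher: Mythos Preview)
Your approach via Lemma~\ref{k} and a retraction $\rho\colon B\to V/c^3V$ has the genuine gap you yourself flag but do not close. For a transcendental, non-fundamental sequence the values $\val(v'-v_i)$ are strictly increasing but \emph{bounded} in $\Gamma$, whereas commutativity of the diagram in Lemma~\ref{k} requires $P_k(v')\equiv P_k(v_i)\pmod{c^3V''}$ for each generator $P_k(v')$ of $B$. A Taylor estimate gives $\val(P_k(v')-P_k(v_i))=\val(v'-v_i)+\text{const}$ for large $i$, so you need $\val(v'-v_i)$ to exceed roughly $3\val(b)$. But $\val(b)$ is governed by the singular locus of the particular $B$ and can be arbitrarily large: for instance $B=V[v',\,g(v')/h(v')]\cong V[X,Y]/(h(X)Y-g(X))$ forces every standard element to have valuation at least $\val(h(v'))$, which you may take as large as you like. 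Your proposed fallback of ``reducing to the fundamental case'' is unavailable---a transcendental sequence is never fundamental---and no affine renormalization of $v'$ controls $\val(b)$ for an arbitrary $B$. (Your observation that the relations $g_l$ hold identically after substituting $v_i$ for $v'$, by transcendence of $v'$, is correct and pleasant; but it is the diagram commutativity, not the well-definedness of $\rho$, that fails.)

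The paper's proof, which is what \cite[Lemma~3.2]{Po1} actually does, bypasses Lemma~\ref{k} entirely and writes down the smooth subalgebras explicitly. One sets $x_i=(v'-v_i)/(v_{i+1}-v_i)$, a unit in $V''$, obtaining a nested chain of polynomial $V$-subalgebras $V[x_i]\subset V[x_{i'}]\subset V''$ for $i<i'$, and shows that the localizations $V[x_i]_{\mm'\cap V[x_i]}$ exhaust $V''$. The key computation, via Taylor's formula, is that $f(v')\in f(v_i)\cdot(1+\mm'\cap V[x_i])$ for every $0\ne f\in V[t]$ and large $i$: the transcendental hypothesis makes each $\val((D^{(n)}f)(v_i))$ eventually constant, and strict growth of $\val(v'-v_i)$ then forces $\val((D^{(n)}f)(v_i)(v'-v_i)^n)>\val(f(v_i))$ for all $n\ge1$. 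The crucial point is that dividing by $v_{i+1}-v_i$ renders $x_i$ a unit, so the error $f(v')/f(v_i)-1$ lies in $\mm'\cap V[x_i]$ \emph{regardless} of how slowly $\val(v'-v_i)$ grows; no absolute precision threshold like $3\val(b)$ ever enters.
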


\begin{proof}  For each $i$ set
$x_i=(v'-v_i)/(v_{i+1}-v_i),$
 so that $x_i$ is a unit in $V'$.
Let $\mm'$ be the maximal ideal of $V'$. We show that for every  polynomial $0 \neq f \in V[t]$ it holds
\[
f(v') \in f(v_i) \cdot (1 + \mm' \cap V[x_i]) \qxq{for every large enough}  i < \omega.
\]
Since $\{v_i\}_{i < \omega}$ is transcendental, for each $g(t) \in K[t]$, the value $\val(g(v_i))$ is constant for large $i$. Moreover,  for large $i$ the values $\val(v' - v_i)$  are strictly increasing as $i$ increases. Thus, in the Taylor expansion\footnote{The polynomials $D^{(n)}f \in R[t]$ for $f \in R[t]$ make sense for any ring $R$: indeed, one constructs the Taylor expansion in the universal case $R = {\bf Z}[a_0, \dots, a_{\deg f}]$ by using the equality $n! \cdot (D^{(n)} f) = f^{(n)}$.}
\[
\tst  f(v') = \sum_{n = 0}^{\deg f} (D^{(n)}f)(v_i) \cdot (v' - v_i)^n \qxq{with}  D^{(n)}f \in V[t]
\]
the values $\val((D^{(n)}f)(v_i) \cdot (v' - v_i)^n)$ are pairwise distinct for every large enough $i$. Consequently, since $\val(f(v')) = \val(f(v_i))$ for large $i$, we conclude that 
\[
\val((D^{(n)}f)(v_i) \cdot (v' - v_i)^n) > \val(f(v_i)) \qx{for every $n > 0$ and large enough $i < \omega$.}
\]
It  remains to note that 
\[
(v' - v_i)^n = x_i^n \cdot (v_{i + 1} - v_i)^n \ \ \mbox{and} \ \ \val(v' - v_i) = \val(v_{i + 1} - v_i), 
\] 
which is enough for our claim.
In particular, we get that $f(v')$ is transcendental over $K$.
The element $x_i$ is transcendental over $K$, so $V[x_i] \subset V'$ is the polynomial algebra. Moreover, for $i < i' < \omega$ we have $x_{i} = x_{i'} \cdot (v_{i' + 1} - v_{i'})/(v_{i + 1} - v_{i}) + (v_{i'} - v_{i})/ (v_{i + 1} - v_{i})$, so $V[x_{i}] \subset V[x_{i'}] \subset V'$. Consequently,  we arrive at a nested sequence
\[
\{ V[x_{i}]_{\mm' \cap V[x_{i}]} \}_{i < \omega}  \ \ \mbox{of ind-smooth $V$-subalgebras of $V'$,}
\]
and,  it remains to show that every element of $V''$ belongs to some $V[x_{i}]_{\mm' \cap V[x_{i}]}$. In fact,  it suffices to show that each $0 \neq f \in V[t]$ satisfies
\[
f(v') \in f(v_i) \cdot (1 + \mm' \cap V[x_i]) \qxq{for every large enough} i < \omega
\]
which was done above.
\hfill\ \end{proof}

\begin{Lemma} \label{immalg-ind-smooth}
For an immediate extension $V \subset V'$ of valuation rings containing $\bf Q$ with value group $\Gamma\subset {\bf R}$ every  algebraic pseudo-convergent sequence $(v_i)_{i<\alpha}$ in $K$, which is not a fundamental sequence but has a pseudo-limit $v'$ in $K'$ has also a pseudo-limit in $K$. 
\end{Lemma}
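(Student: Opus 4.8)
The plan is to argue by contradiction: assume $(v_i)$ has \emph{no} pseudo-limit in $K$ and derive a contradiction (this is the content that, later, rules out the algebraic non-fundamental case when one builds up immediate extensions). First I would set up the standard apparatus from Kaplansky's classification \cite{Kap}. Since the sequence is algebraic, fix a monic $q \in K[T]$ of least degree $n$ for which $\val(q(v_i))$ is eventually strictly increasing. Minimality forces $n \geq 2$ --- a linear such $q$ would exhibit a pseudo-limit in $K$ --- and, by Kaplansky's analysis of algebraic pseudo-convergent sequences, $q$ is irreducible over $K$ and, on adjoining to $K$ a root $w$ of $q$ that is a pseudo-limit of $(v_i)$, the extension $K \subset K(w)$ is immediate with $[K(w):K] = n$.

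I would then extract the quantitative meaning of the hypotheses $\Gamma \subset \mathbf R$ and ``not fundamental''. The values $\val(v' - v_i)$ form a strictly increasing sequence in $\Gamma$; since $(v_i)$ is not fundamental they are bounded above in $\Gamma$, so, as $\Gamma \subset \mathbf R$, the supremum $\gamma := \sup_i \val(v' - v_i)$ is a \emph{finite} real number, and moreover $\Gamma$ must be dense in $\mathbf R$ (a discrete subgroup of $\mathbf R$ admits no non-fundamental pseudo-convergent sequence). For $c \in K$, one checks that $c$ is a pseudo-limit of $(v_i)$ precisely when $\val(v' - c) \geq \val(v' - v_i)$ for all $i$; since the $v_i$ themselves show $\sup_{c \in K}\val(v' - c) \geq \gamma$, it suffices --- using the density of $\Gamma$ --- to prove that this supremum \emph{exceeds} $\gamma$. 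If it equals $+\infty$ then $v'$ lies in the completion of $K$, whence some $c \in K$ has $\val(v'-c)$ as large as we like and we are done; so from now on I may assume $\sup_{c \in K}\val(v' - c) = \gamma$, finite and not attained, and seek a contradiction.

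The next step is to show $v'$ is algebraic over $K$: were it transcendental, then, since $V \subset V' \cap K(v')$ is again immediate, $v'$ would be a pseudo-limit of a \emph{transcendental} pseudo-convergent sequence in $K$; this sequence has the same pseudo-limit $v'$ as the algebraic sequence $(v_i)$, hence is equivalent to it, hence algebraic --- a contradiction (the hypothesis $\mathbf Q \subseteq V$ keeps $q$ separable, which is what makes the comparison of the two sequences via $q$ clean). Let $p \in K[T]$ be the minimal polynomial of $v'$, with roots $v' = w_1, w_2, \dots, w_d$ in a fixed valued algebraic closure. A Newton-polygon computation around $v'$ --- using crucially that $p'(v') = \pm\prod_{j \geq 2}(v' - w_j)$ lies in $K(v') \subseteq K'$, so that $\val(p'(v')) \in \Gamma' = \Gamma$ by immediateness --- writes $\val(p(c)) = (s + 1)\val(v' - c) + C$ for every $c \in K$ with $\val(v' - c)$ in a suitable nonempty interval just below $\gamma$, where $s \geq 0$ is the number of $j \geq 2$ with $\val(v' - w_j) \geq \gamma$ and $C \in \Gamma$ is fixed. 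Comparing this with the analogous formula for $q$ and invoking the minimality of $q$, one obtains $\deg p = n$, so $K(v')/K$ is a finite immediate extension of degree $n \geq 2$; in residue characteristic $0$ this already contradicts Ostrowski's defect theorem \cite{O}, and in residue characteristic $p > 0$ one combines the classification of defect extensions with the uniqueness (from Kaplansky) of the immediate completion of $K$ along $(v_i)$ to see that $\gamma$ cannot fail to be attained. Either way we reach the desired contradiction.

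The main obstacle is exactly this last step --- turning ``$v'$ algebraic over $K$, at valuative distance precisely $\gamma$ from $K$ but with $\gamma$ not attained'' into a contradiction. This is the genuinely non-trivial, Ostrowski--Kaplansky-flavoured core of the lemma: it is the place where finiteness of $[K(v'):K]$, equality of value groups, and the hypothesis $\Gamma \subset \mathbf R$ have to be used in concert, and where one must be careful about the residue characteristic. Everything preceding it is bookkeeping with pseudo-convergent sequences.
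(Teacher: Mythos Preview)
Your final appeal to Ostrowski is mis-stated, and the entire detour through $v'$ is unnecessary.

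You write that ``$K(v')/K$ finite immediate of degree $n \ge 2$ in residue characteristic $0$ already contradicts Ostrowski's defect theorem''. It does not: Ostrowski's equality $[L:K] = ef$ requires $(K,v)$ to be \emph{Henselian}, and for non-Henselian $K$ proper finite immediate extensions abound --- take $K = \mathbf Q(t)$ with the $t$-adic valuation and $L = K(\sqrt{1+t})$, which (with either extension of the valuation) is immediate of degree $2$ in residue characteristic $0$. What Ostrowski actually yields, after passing to Henselizations (this is what the paper cites as \cite[Corollary~4.2]{Po1}), is that in residue characteristic $0$ any finite immediate extension of a rank-$1$ valued field $K$ embeds as a valued field in $\hat K$; in other words, $K$ is \emph{dense} in it. Your argument can be repaired with this: you had already reduced to $\sup_{c\in K}\val(v'-c) = \gamma$ finite, that is $v' \notin \hat K$, so the correct Ostrowski conclusion $v' \in \hat K$ is the sought contradiction. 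But you never make this connection, and as written the step fails.

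More to the point, the whole passage through $v'$ --- the argument that $v'$ must be algebraic (which you do not fully justify: two pseudo-convergent sequences sharing a pseudo-limit are not automatically ``equivalent'' in a sense that forces both to be algebraic), the Newton-polygon computation pinning down $[K(v'):K]$, the residue-characteristic case split --- is superfluous. You already have, at the end of your first paragraph, the Kaplansky root $w$ with $K(w)/K$ finite immediate. The paper's proof applies Ostrowski \emph{there}: one gets $w \in \hat K$, so $K$ is dense in $K(w)$; then, because $(v_i)$ is not fundamental, \cite[Lemma~2.5]{Po2} (choose $c \in K$ with $\val(w-c)$ exceeding every $\val(v_{i+1}-v_i)$) produces a pseudo-limit of $(v_i)$ in $K$. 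The given element $v' \in K'$ plays no role whatsoever.
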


\begin{proof}  By \cite[Theorem 3]{Kap} there exists an immediate extension of valued fields $K\subset K(u)$ such that $u$ is algebraic over $K$ and it is a pseudo-limit of $(v_i)$ in $K(u)$. As a consequence of Ostrowski's Defektsatz \cite[Sect 9, No 55]{O} (see  \cite[Corollary 4.2]{Po1}, or \cite[Corollary 3.10]{Po2}) we see that $K\subset K(u)$ is dense, that is, $u$ belongs to the completion of $K$. Thus $(v_i)$ has a pseudo-limit in $K$ by \cite[Lemma 2.5]{Po2}.
\hfill\ \end{proof}

\begin{Remark}{\em The above lemma is false if the characteristic of the residue field of a  valuation rings is $>0$ (see \cite[Example 3.13]{Po2} inspired by \cite[Sect 9, No 57]{O}).}
\end{Remark}

\begin{Proposition} \label{p0} For an immediate extension $V \subset V'$ of valuation rings containing $\bf Q$ with value group $\Gamma\subset {\bf R}$, $V'$ is ind-smooth over $V$.
\end{Proposition}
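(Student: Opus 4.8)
The plan is to realize $V \subset V'$ as a filtered union of ind-smooth subextensions built one pseudo-convergent sequence at a time, using Zorn's lemma together with Kaplansky's Theorem 4 (the characterization of maximally complete valuation rings) as the engine. First I would set up the partially ordered set $\mathcal{F}$ of valuation rings $W$ with $V \subseteq W \subseteq V'$ such that $V \subset W$ is ind-smooth; this set is nonempty (it contains $V$) and is closed under filtered unions because a filtered direct limit of ind-smooth $V$-algebras is ind-smooth (as noted after Lemma \ref{factor-through}), so Zorn's lemma gives a maximal element $W$. The goal is to show $W = V'$. Suppose not, and pick $v' \in V' \setminus W$. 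Since $V \subset V'$ is immediate, so is $W \subset V'$, hence so is $W \subset V' \cap K(v') =: W''$ (using that subextensions of immediate extensions are immediate, as recalled in the text); thus $v'$ is a pseudo-limit of some pseudo-convergent sequence $(v_i)$ in $W$ with no pseudo-limit in $W$ (this is exactly the content of Kaplansky's Theorem 4 applied to the valuation ring $W$: since $W$ has a nontrivial immediate extension, some pseudo-convergent sequence in $\Frac(W)$ lacks a pseudo-limit there, and one arranges it to have $v'$ among its pseudo-limits).

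Next I would split into the two Kaplansky cases for $(v_i)$ over $W$. If $(v_i)$ is transcendental, then Lemma \ref{imm-ind-smooth} (applied to the immediate extension $W \subset V'$) shows directly that $W'' = V' \cap \Frac(W)(v')$ is a filtered increasing union of smooth $W$-subalgebras, hence ind-smooth over $W$; composing with the ind-smooth $V \subset W$ (using that ind-smooth maps compose, e.g.\ via Lemma \ref{ind-sm-comp} or directly) gives that $V \subset W''$ is ind-smooth, contradicting maximality of $W$ since $v' \in W'' \setminus W$. If $(v_i)$ is algebraic, I would first check it cannot be fundamental: indeed Lemma \ref{immalg-ind-smooth} (applied with $W$ in place of $V$, which is legitimate since $W$ contains $\mathbf{Q}$ and has value group $\Gamma \subset \mathbf{R}$) says a non-fundamental algebraic pseudo-convergent sequence with a pseudo-limit in an immediate extension already has one in $\Frac(W)$ — contradicting our choice — so if $(v_i)$ is not fundamental we are done by contradiction; and if $(v_i)$ \emph{is} fundamental, then by \cite[Theorem 3]{Kap} the extension $\Frac(W) \subset \Frac(W)(u)$ with $u$ an algebraic pseudo-limit is again immediate, and being fundamental forces (again via Ostrowski's Defektsatz, as in the proof of Lemma \ref{immalg-ind-smooth}, or by a direct density argument since the value group is archimedean) that this extension is trivial or dense, producing a pseudo-limit of $(v_i)$ in the completion of $\Frac(W)$; one then transfers it into $\Frac(W)$ itself using that $W$ is henselian/the relevant \cite[Lemma 2.5]{Po2}-type argument, again a contradiction. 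Hence in all cases we reach a contradiction, so $W = V'$ and $V \subset V'$ is ind-smooth.

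The main obstacle I anticipate is the algebraic-fundamental subcase: one must rule out, or else desingularize, an immediate algebraic extension coming from a fundamental sequence. The clean way is to observe that for value group $\Gamma \subset \mathbf{R}$ (archimedean), a fundamental pseudo-convergent sequence has $\val(v_i - v_{i'})$ cofinal in $\Gamma$, so $v'$ lies in the completion $\widehat{\Frac(W)}$; since char $= 0$, $W$ and its completion induce the same immediate-extension behavior, and an algebraic element of $\widehat{\Frac(W)}$ over $\Frac(W)$ that is also a pseudo-limit must already lie in $\Frac(W)$ by separability of the completion together with the pseudo-limit uniqueness-up-to-value considerations — this is precisely where Lemma \ref{immalg-ind-smooth} and the cited \cite[Corollary 4.2]{Po1} and \cite[Lemma 2.5]{Po2} do the work, so I would lean on those rather than re-proving them. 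A secondary subtlety is making sure that at each stage the hypothesis "value group $\subset \mathbf{R}$" is genuinely inherited by $W$ and $W''$ — it is, since all these rings share the value group $\Gamma$ of $V$ by immediacy — so the lemmas apply verbatim.
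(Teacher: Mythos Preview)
Your overall strategy---Zorn's lemma on the poset of ind-smooth intermediate valuation rings, combined with Kaplansky's classification of pseudo-convergent sequences---is sound and close to the paper's argument. The transcendental case (fundamental or not) is handled correctly by Lemma \ref{imm-ind-smooth}, and the non-fundamental algebraic case by Lemma \ref{immalg-ind-smooth}, exactly as in the paper.

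The genuine gap is your treatment of the \emph{fundamental algebraic} case. Your proposed resolution---``transfer the pseudo-limit from $\widehat{\Frac(W)}$ into $\Frac(W)$ using that $W$ is henselian or a \cite[Lemma 2.5]{Po2}-type argument''---does not work: the maximal $W$ produced by Zorn has no reason to be henselian, and \cite[Lemma 2.5]{Po2} (as used in Lemma \ref{immalg-ind-smooth}) applies precisely to non-fundamental sequences. Fundamental algebraic sequences with no pseudo-limit in $\Frac(W)$ do occur in characteristic $0$: think of a rational Cauchy sequence converging $p$-adically to an irrational algebraic number in $\mathbf{Q}_p$, and analogous examples exist for dense $\Gamma\subset\mathbf{R}$. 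What Ostrowski actually gives is that the finite immediate extension $\Frac(W)\subset\Frac(W)(v')$ is \emph{dense}, not that $v'\in\Frac(W)$. The paper does not try to rule out such sequences. Instead, once the transfinite step has produced $V''$ absorbing all non-fundamental pseudo-limits, every $v'\in V'\setminus V''$ is the pseudo-limit of a fundamental sequence, so $V''\subset V'$ is dense; this dense extension is then desingularized directly by Lemma \ref{k} (for a finitely generated $B\subset V'$ with standard element $d$, the isomorphism $V''/d^3V''\cong V'/d^3V'$ supplies the needed retraction). You can patch your Zorn argument the same way---invoke Lemma \ref{k} in the fundamental algebraic case using the density of $\Frac(W)$ in $\Frac(W)(v')$---but Lemma \ref{k} is the missing ingredient either way.
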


\begin{proof} Applying Lemma \ref{imm-ind-smooth} possible infinitely, even uncountably many, we find a pure transcendental extension $K''\subset K'$ of $K$ such that $V''=V'\cap K''$ is ind-smooth over $V$ and all transcendental pseudo-convergent sequences in $V''$ over $V$ having a pseudo limit in $V'$, which are not fundamental sequences,  have  pseudo-limits in $V''$. By Lemma \ref{immalg-ind-smooth} we see that this holds also for algebraic pseudo-convergent sequences from $V''$, which are not   fundamental sequences. It follows that the extension $V''\subset V'$ is dense using \cite[Theorem 1]{Kap}. Now, it is enough to apply Lemma \ref{k} to see that $V'$ is ind-smooth over $V''$. Indeed, let $B\subset V'$ be a finitely generated $V''$-subalgebra
and $w$ its inclusion. By separability, $w(H_{B/V''})\not = 0$ and choose an element $d\in V''$, which is standard for $B$ over $V''$. Then the composite map $B\to V'\to V'/d^3V'\cong V''/d^3V''$ factors obviously to a smooth  $ V''/d^3V''$-algebra. By Lemma \ref{k} $w$ factors through a smooth $V''$-algebra.
\hfill\ \end{proof}

\begin{Corollary} \label{c0}  For an extension $V \subset V'$ of valuation rings containing $\bf Q$ with the same value group $\Gamma\subset {\bf R}$, $V'$ is ind-smooth over $V$.
\end{Corollary}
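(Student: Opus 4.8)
The plan is to reduce to the immediate case settled in Proposition~\ref{p0} by realizing the residue field extension through an ind-smooth enlargement of $V$ inside $V'$ that leaves the value group $\Gamma$ unchanged. Write $k\subset k'$ for the induced residue field extension. Since $V$ and $V'$ contain ${\bf Q}$, every nonzero integer is a unit in each, so $k$ and $k'$ have characteristic $0$; in particular $k'/k$ and $K'/K$ are separable. Also $\Gamma\subset{\bf R}$ forces $V$ and $V'$ to have rank at most $1$, and if $\Gamma=0$ then $V$ and $V'$ are fields and the assertion is that the separable extension $K'/K$ is ind-smooth, which is clear; so assume the rank is $1$. Finally, we may assume $V\subset V'$ is local: otherwise $\mathfrak m'\cap V=(0)$, hence $K\subset V'$, and by Theorem~\ref{z} the ring $V'$ is a filtered direct limit of smooth $K$-subalgebras, so $K\to V'$ is ind-smooth and, since $V\to K=\Frac(V)$ is ind-smooth, so is $V\to V'$.

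Now I would first pass to the completion: by Corollary~\ref{descend-from-hat} it suffices to prove $V\to\widetilde{V'}$ ind-smooth, where $\widetilde{V'}$ is the completion of $V'$; this $\widetilde{V'}$ is again a rank-one valuation ring, it is Henselian, and it has value group $\Gamma$ and residue field $k'$. So, replacing $V'$ by $\widetilde{V'}$, I may assume $V'$ is Henselian.

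Next I would build an intermediate valuation ring $V\subset V_1\subset V'$ with residue field $k'$ and value group $\Gamma$ — so that $V_1\subset V'$ is immediate. For the transcendental part of $k'/k$, fix lifts $x_\lambda\in V'$ of a transcendence basis $(\bar x_\lambda)$ of $k'/k$ with $\bar x_\lambda\ne 0$; reducing a nontrivial relation, scaled so that some coefficient is a unit of $V$, modulo the maximal ideal of $V'$ shows that the $x_\lambda$ are algebraically independent over $K$ and that the valuation of $V'$ restricts to the Gauss valuation on the subfield $K(x_\lambda)$ of $K'$ generated over $K$ by all the $x_\lambda$. Hence $W_0:=V'\cap K(x_\lambda)$ is the localization of the polynomial ring $V[x_\lambda]$ at its intersection with the maximal ideal of $V'$, so $W_0$ is a filtered direct limit of finitely presented smooth $V$-algebras (localizations of polynomial algebras in finitely many of the $x_\lambda$), hence ind-smooth over $V$; moreover $W_0$ has value group $\Gamma$ and residue field $k(\bar x_\lambda)$, over which $k'$ is algebraic. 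For the remaining separable algebraic part, I would construct by transfinite recursion an increasing tower $W_0=W^{(0)}\subset W^{(1)}\subset\cdots$ of valuation subrings of $V'$, each finite \'etale over its predecessor: given $W^{(i)}$ with residue field $\kappa_i\subsetneq k'$, choose $\bar\theta\in k'\setminus\kappa_i$ with minimal polynomial $\bar q\in\kappa_i[T]$ (separable, since $\kappa_i$ has characteristic $0$), lift $\bar q$ to a monic $q\in W^{(i)}[T]$ of the same degree, use the Henselianness of $V'$ to get a root $\theta\in V'$ of $q$ reducing to $\bar\theta$, and set $W^{(i+1)}:=W^{(i)}[\theta]\cong W^{(i)}[T]/(q)$. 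A standard argument with the fundamental inequality for extensions of valuation rings shows that $q$ is irreducible over $\Frac(W^{(i)})$, that $W^{(i+1)}$ is local with residue field $\kappa_i(\bar\theta)$, that $q'(\theta)$ is a unit there, hence that $W^{(i+1)}$ is finite \'etale over $W^{(i)}$ and, being normal, module-finite and local over the valuation ring $W^{(i)}$, is itself a valuation ring, with value group $\Gamma$ and equal to $V'\cap\Frac(W^{(i+1)})$. Taking unions at limit stages and then a maximal such tower, the union $V_1$ is a valuation ring, a filtered direct limit of the finitely presented smooth (finite \'etale) $W_0$-algebras $W^{(i)}$ — hence ind-smooth over $W_0$, and over $V$ since ind-smoothness composes — with value group $\Gamma$ and residue field $k'$ (otherwise one more finite \'etale step could be adjoined).

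Finally, $V_1\subset V'$ is an immediate extension with ${\bf Q}\subset V_1$ and value group $\Gamma\subset{\bf R}$, so Proposition~\ref{p0} shows $V'$ is ind-smooth over $V_1$, hence over $V$; recalling that $V'$ here denotes the completion of the original valuation ring, Corollary~\ref{descend-from-hat} then yields that the original extension $V\subset V'$ is ind-smooth. I expect the main obstacle to be the inductive step in the third paragraph: checking that Hensel lifting inside $V'$ really yields a valuation ring $W^{(i)}[\theta]=V'\cap\Frac(W^{(i)}[\theta])$ that is finite \'etale over $W^{(i)}$ — this is exactly where one uses the fundamental inequality together with the fact that a local, module-finite, normal algebra over a valuation ring is a valuation ring. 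The passage to the completion and the Gauss-extension step are routine given the tools already at hand.
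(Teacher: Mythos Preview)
Your proof is correct and follows the same overall strategy as the paper: produce an intermediate valuation ring between $V$ and $V'$ with value group $\Gamma$ and residue field $k'$, so that the top inclusion is immediate and Proposition~\ref{p0} applies, while the bottom inclusion is ind-smooth by construction. The implementations differ. The paper completes \emph{both} $V$ and $V'$ (using Proposition~\ref{p0} for $V\subset\widetilde V$ and Corollary~\ref{descend-from-hat} for $V'$), so that each contains its residue field as a coefficient field; the intermediate ring is then simply $V''=V'\cap K(k')$, and its ind-smoothness over $V$ is read off from the filtration by $V'\cap K(k_i)$ with $k_i/k$ finitely generated. You complete only $V'$ and build the intermediate ring by hand: a Gauss extension for a transcendence basis of $k'/k$, followed by a transfinite tower of finite \'etale steps via Hensel lifting and the fundamental inequality. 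Your route is longer but more self-contained, as it avoids invoking coefficient fields for complete rank-one valuation rings; the paper's route is quicker once one accepts that structural input. Both lead to the same reduction to Proposition~\ref{p0}.
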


\begin{proof}  By Proposition \ref{pass-to-hat} we may reduce to the case when $V$, $V'$ are complete and so they are Henselian since $\dim V=1$ , that is, they   contain their residue fields $k, k'$. Let $K$, $K'$ be the fraction fields of $V$, resp. $V'$. Then $V'$ is an immediate extension of  
$V''=V'\cap K(k')$ and so it is ind-smooth by the above proposition. Express $k'$ as a filtered union of some finitely generated field extensions $(k_i)$ of $k$. It is enough to see that $V_i=V'\cap K(k_i)$ is an ind-smooth extension of $V$. But $V_i$ is even essentially smooth over $V$ because $k_i$ is so over $k$.   
\hfill\ \end{proof}

\section{Extensions of valuation rings}

The following proposition is an extension of Corollary
 \ref{c0}.
\begin{Proposition} \label{p} Let $V\subset V'$ be an extension of valuation rings containing $\bf Q$. Suppose  $\dim V<\infty$ and
the value group extension of $V\subset V'$ is trivial. 
Then $V'$ is ind-smooth over $V$.
\end{Proposition}

\begin{proof} 

 For the proof we apply Lemma \ref{factor-through}.

 Let   $E $ be  a $V$-algebra of finite presentation, let us say $E \cong
V[Y]/I$, $Y =
(Y_1\ldots, Y_m)$, $I$ being  a finitely generated ideal. Let $w:E\to V'$ be a $V$-morphism. We will show that $w$ factors through a smooth $V$-algebra.
 $E$ is finitely generated and so it is $\Im w$. By Lemma \ref{fp-lemma} we see that $\Im w$ is finitely presented. So we may replace $E$ by $\Im w$, that is we may assume $w$ injective. By separability  we have  $ w( H_{E/V})\not =0$, let us assume that  $ w( H_{E/V})V' \supset zV'$ for some $z \in V $, $z\not =0$. Replacing $z$ by a power of it we may assume that $z=\sum_i^s b_ib'_i$ for some $b_i=\det(\partial f_{ij}/\partial Y_{j_i})$ for some systems of polynomials $f_i$ from $I$ and $b''_i\in V[Y]$ which kills $I/(f_i)$.
Similarly as in  \cite[Lemma 4]{ZKPP} we may
assume that we can take $s=1$, that is   for some polynomials $ f = (f_1,\ldots,f_r)$
from $I$, we have  $z\in NM E$ for some $N\in ((f):I)$ and a $r\times r$-minor $M$ of the Jacobian matrix $(\partial f_i/\partial Y_j)$ (since $V'$ is a valuation ring this reduction is much easier). Thus we may assume $z$ is standard over $V$ (see the beginning of Section 2), which is necessary later to apply Lemma \ref{k}.
 Let $q_2'\in \Spec V'$,  be the minimal prime  ideal of $zV'$ and $q_2=q_2'\cap V$. As the value group extension of $V\subset V'$ is trivial we have $q'_2=q_2V'$.

   Let $q_1\in \Spec V$, $q_1\subset q_2$ be  the greatest prime ideal of $V$  not containing $z$. Then $q_1\not = q_2$.  The extension $ V_{q_2}/q_1V_{q_2}\subset  V'_{q'_2}/q_1V_{q'_2}$  has the trivial value group extension. and so it is ind-smooth by Corollary \ref{c0}. The composite map $E\xrightarrow{w} V'\to V'_{q'_2}/q_2V'_{q'_2}$ factors by a smooth  $ V_{q_2}/q_1V_{q_2}$-algebra $G$, let us say it is the composite map $E\xrightarrow{\alpha} G\xrightarrow{\beta}  V'_{q'_2}/q_2V'_{q'_2}$. We  may assume that  $G= (V_{q_2}/q_1V_{q_2})[U]_{g'h}/(g)$, with $U=(U_1,\ldots,U_l)$, $g'=\partial g/\partial U_1$, $g, h\in V[U]$ by \cite[Theorem 2.5]{S} and let $ \beta$ be given by $U+(g)\to u+q_1 V'_{q'_2}$ for some  $u\in  (V'_{q'_2})^l$. Note that \cite[Theorem 2.5]{S} gives just that a localization of $G$ has the form a localization of $C=(V_{q_2}/q_1V_{q_2})[U]_{g'}/(g)$ and so the above composite map factors through a $C_h$ for some $h\in V[Y]$. Then $g(u)\equiv 0$ modulo $q_1 V'_{q'_2}$ and in particular   $g(u)\equiv 0$ modulo $z^3 V'_{q'_2}$.  Then $g(u)=z^3t$ for some $t\in V'_{q'_2}$. Note that the composite map $E\to V'\to V'_{q'_2}$ factors through the smooth $V_{q_2}$-algebra  $D=(V_{q_2}[U,T]/(g-z^3T))_{g'h}$ modulo $z^3$, where $T\to t$. By Lemma \ref{k} we see that $E\to  V'_{q'_2}$ factors through a smooth $D$-algebra $D'$ which is also smooth over $V_{q_2}$. Using Lemma \ref{delocalize}  we see that $w$ factors through a finitely presented $V$-algebra $E''$, let us say through a map $w'':E''\to V'$ with $w''(H_{E''/V})\not \subset q'_2$. More precisely, by Lemma \ref{delocalize} there exist a finitely presented $V$-algebra $E''$ and $c\in E''$ with $E''_c\otimes_V V_q\cong D'[X,X^{-1}]$ and a factorization $V\to E\to E''\to V'$ such that $E'\to V'_{q'}$ factors through $E'\to E'_c\otimes_V V_q\to V'_{q'}$.  Note that 
  $\dim V'=\dim V<\infty$  because $V,V'$ have the same value group.   We  arrive in  finite steps using induction on $\dim V'/zV'$  in the case when $z$ is a unit, that is we can embed $E$ in a smooth $V$-algebra. This is enough by Lemma \ref{factor-through}.
\hfill\ \end{proof}

\begin{Theorem}\label{T0} Let $V\subset V'$ be an immediate extension of valuation rings   containing $\bf Q$.
Then $V'$ is ind-smooth over $V$.
\end{Theorem}
\begin{proof} Let $K\subset K'$ be the fraction field of $V\subset V'$ and $K''\subset K'$  a pure transcendental extension  of $K$ generated by a transcendental basis of $K'/K$, that is $K'/K$ is algebraic.  Applying Lemma \ref{imm-ind-smooth} possible infinitely and even uncountably many, as in Proposition \ref{p0} we see that $V''=V'\cap K''$ is ind-smooth over $V$ and all transcendental pseudo-convergent sequences in $V''$ over $V$ having a pseudo limit in $V'$, which are not fundamental sequences,  have  pseudo-limits in $V''$. Thus we reduce to show that $V'$ is ind-smooth over $V$ when $K'/K$ is algebraic. Actually, it is enough to assume $K'/K$ finite because $V'$ is the filtered union of $V'\cap L$ for all subfields $L\subset K'$ which are finite extension over $K$.

Let $E=V[Y]/I$, $Y=(Y_1,\ldots,Y_n)$ be a finitely generated $V$-subalgebra of $V'$ (so finitely presented by Lemma \ref{fp-lemma}) and a map $w:E\to V'$. By  Lemma \ref{factor-through} it is enough to show that $w$ factors through a smooth $V$-algebra.  Consider $H_{E/V}$ and 
  a standard element $z\in V$ for $E$ over $V$, so $w(z)\in w(H_{E/V})V'$, as in the proof of Proposition \ref{p}. If  $V\subset V'$ is dense  we may apply  Proposition \ref{pass-to-hat} to see that $w$ factors through a smooth $V$-algebra (note that $w(H_{E/V})\not =0$ says that the composite map $E\to V'\to K'$ factors through a smooth $V$-algebra). In the remaining case the factorization is constructed in some steps:  for the standard element $z$ one chooses adjacent prime ideals $q_1\subset q$ of $V$ such that $w(z)\in qV'\setminus q_1V'$ and construct a factorization $E\to E'\xrightarrow{w'} V'$ such that $w'(H_{E'/V})\not \subset qV'$, where $E'$ is finitely presented over $V$. If after finite steps we get a factorization $E\to E^{(n)}\xrightarrow{w^{(n)}}V'$ such that $w^{(n)}(H_{E^{(n)}/V})V'=V'$ the goal is reached.  A hard problem is to show that we can find such $E^{(n)}$ in finite steps. For this we will consider a finite partition ${\mathcal P}_i$, $i=1,\ldots,s$ of $\Spec V$ corresponding to those $q\in\Spec V$ which have the same dimension $f_i=f_q\leq f=[K':K]$ of the fraction field extension $K_q\subset K'_q$ of $V/q\subset V'/qV'$. We will see that to each construction $q_1$ change  from one ${\mathcal P}_j$ to another one from  ${\mathcal P}_i$ with $j<i$ and $f_i<f_j$. Finally we arrive in finite steps to the case $f_{q_1}=1$,
which is done easily as in the dense case.

   Assume  $V\subset V'$ is not dense and $q,q_1, f_{q_1},({\mathcal P}_i)_{1\leq i\leq s}$  as above. More precisely,
  let $q'\in \Spec V'$ be the minimal prime ideal of $w(z)V'$ and $q=q'\cap V$. Thus  $qV'=q'$ because $V\subset V'$ is immediate.  Let $q_1'\in \Spec V'$ be the prime ideal corresponding to the maximal ideal of the fraction ring of $V'$ with respect to the multiplicative system  generated by $z$. Then $q'_1$ is the biggest prime ideal of $V'$ contained strictly in $q'$ and so height$(q'/q'_1)=1$. Set $q_1=q'_1\cap V$. We have $f_q\leq f_{q_1}$.   
   
    Let $x_q$ be a primitive element of the separable finite extension $K'_q/K_q$ and $g_q\in V/q[X]$ be a primitive polynomial multiple of Irr$(x_q,K_q)$ by a nonzero constant of $K$. Note that if $q,\qq\in {\mathcal P}_i$, $q\subset \qq$ then $f_q=f_{\qq}=f_i$ and $g_q$ remains irreducible over $V/\qq$.
    Clearly, $f_s=1$ because $f_{\mm}=1$ for the maximal ideal $\mm$ of $V$, the extension $V\subset V'$ being immediate.
A set  ${\mathcal P}_i$ has a maximum element for inclusion namely $\pp_i=\cup_{ \qq \in {\mathcal P}_i}\qq$. Indeed, $\pp_i$  is clearly a prime ideal and if  $f_{\pp_i}<f_i$ then $f_{\qq}<f_i$ for some $\qq\in  {\mathcal P}_i$, which is false.

    Assume $q_1\in {\mathcal P}_j$. If $q_1\not =\pp_j$ then $(V/q_1)_{\pp_j}\subset (V'/q_1V')_{\pp_jV'}$ is in fact a localization of $(V/q_1)[X]/(g_{q_1})$ because $g_{q_1}'=\partial g_{q_1}/\partial X$ corresponds to a unit in $(V'/q_1V')_{\pp_j}$ and so the composite map $E\to V'\to (V'/q_1V')_{\pp_jV'}$ factors through an etale $V/q_1$-algebra of the form $((V/q_1)[X]/(g_{q_1})_{g_{q_1}'h}$ for some $h\in V[X]$. In particular $E\to V'\to (V'/(z^3))_{\pp_jV'}$ factors  through an etale $V/(z^3)$-algebra and by Lemma \ref{k} the map  $E\to V'\to V'_{\pp_jV'}$ factors through a smooth $V$-algebra. Using Lemma \ref{delocalize}  we see that $w$ factors through a finitely presented $V$-algebra $E'$, let us say through a map $w':E'\to V'$ with $w'(H_{E'/V})\not \subset \pp_jV'$. Changing $E$ by $E'$ we see that the new $q$ belongs to ${\mathcal P}_i$ for some $i>j$. Moreover, the new $q_1$ belongs also to ${\mathcal P}_i$ for some $i>j$, because otherwise we get $q_1=\pp_j$. 
    
    If $q_1=\pp_j$ then $q\in {\mathcal P}_{j+1}$ and we apply  Corollary \ref{c0}.  Then   we see that   $(V/q_1)_q\subset (V'/q_1')_{q'}$ is ind-smooth and as above we see that the composite map $E\to V'\to (V'/q_1')_{q'}$ factors through a
 smooth $V/q_1$-algebra and finally by Lemmas \ref{k} and \ref{delocalize} we get that   $w$ factors through a finitely presented $V$-algebra $E'$, let us say through a map $w':E'\to V'$ with $w'(H_{E'/V})\not \subset qV'$. Now the new $q_1$,
 that is the old $q$, belongs to ${\mathcal P}_{j+1}$.  In some steps (at most $s$) we arrive to the case when $f_{q_1}=1$  
    
   If $f_{q_1}=1$  then we get   $f_{q''_1}=1$  for all  $q''_1\in \Spec V$ containing  $q_1$. Actually, we get $V/q= V'/q'$ and so in particular    $V/q\subset V'/q'$ is ind-smooth.   Using Lemma \ref{k} we see that $w$ factors through a smooth (in fact etale) $V$-algebra.
  Applying  Lemma \ref{factor-through} we are done.   
\hfill\ \end{proof}

\begin{Proposition} \label{p1}   Let $V\subset V'$ be an extension of  valuation rings. Suppose that

\begin{enumerate}

\item $V$ is a discrete valuation ring extending ${\bf Z}_{(p)}$ with $\pi$ its local parameter, and $p$ a prime number.

\item $\pi V'$ is the maximal ideal of $V'$,

\item the residue field extension of $V\subset V'$ is separable.
\end{enumerate}

Then $V\to V'$ is ind-smooth.
\end{Proposition}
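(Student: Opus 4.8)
\emph{Plan of proof.} Write $K=\Frac(V)$, $\Gamma'=\val(\Frac(V')^{*})$, $\mm'=\pi V'$, and $k\subset k'$ for the residue fields; the plan is to run the proof of Proposition \ref{p}, exploiting that $\chara K=0$. Two reductions first. By Lemma \ref{fp-lemma} every finite type $V$-subalgebra of $V'$ is finitely presented, and, as $\chara K=0$, its fraction field is separable over $K$, so it is $V$-smooth at its generic point; hence by Lemma \ref{factor-through} it suffices to factor each inclusion $w\colon B\hookrightarrow V'$ of a finite type $V$-subalgebra $B$ through a smooth $V$-algebra. Also $K=V[1/\pi]=V[X]/(\pi X-1)$ is a standard smooth $V$-algebra, so $V\to K$ is smooth; consequently, by Zariski's Theorem \ref{z}, $V\to W$ is ind-smooth whenever $W$ is a valuation ring in which $\pi$ is invertible (such a $W$ contains the characteristic-zero field $K$). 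Finally, passing to $V'\cap F$ for a finitely generated subextension $F/K$ of $\Frac(V')/K$ keeps all the hypotheses — $\val(\pi)$ remains the least positive value and residue fields only shrink — so we may assume $\Gamma'$ has finite rational rank, i.e.\ $\Spec V'$ is a finite chain $0=\qq_0\subsetneq\qq_1\subsetneq\cdots\subsetneq\qq_r=\mm'$.

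Fix $B$ and $w$ as above. At each stage pick, by Lemma \ref{std-lem}, an element $b\in B$ standard over $V$ with $w(b)\ne0$ (and, from the second stage on, with $w(b)$ outside the prime handled at the previous stage, which is possible because that stage ended with ``$w(H_{B/V})\not\subseteq$ that prime''); then $\Spec B\to\Spec V$ is smooth away from $V(w(b)B)$. If $w(b)$ is a unit of $V'$ we finish with $S=B_{b}$. Otherwise $w(b)\in\mm'$; let $\qq'=\qq_j$ be the minimal prime of $w(b)V'$. \emph{Case A: $j<r$.} Then $\qq'\subsetneq\mm'$, so $\qq'$ cannot contain $\pi$ (else $\qq'\supseteq\pi V'=\mm'$); hence $\qq'\cap V=0$ and $\pi$ is invertible in $V'_{\qq'}$, so $V\to V'_{\qq'}$ is ind-smooth by the second reduction. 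We are now exactly in the first case of the proof of Proposition \ref{p} (with ``$\qq_2$''$=0$, so that its ``$k$'' is our $K$): the same steps — pass to $B[Z]$ with $Z\mapsto w(b)$, factor $B[Z]\to V'_{\qq'}$ through a standard smooth $K$-algebra, apply Lemma \ref{k}, de-localize by Lemma \ref{delocalize} — refine $w$ to an inclusion $w''\colon B''\hookrightarrow V'$ of a finite type $V$-subalgebra with $w''(H_{B''/V})\not\subseteq\qq'$. Replacing $B$ by $B''$ and iterating, a standard element mapping outside $\qq_j$ has minimal prime $\qq_{j'}$ with $j'>j$ (the primes of $V'$ are linearly ordered), so after at most $r$ stages we either have produced a smooth $S$ or have reached Case~B.

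\emph{Case B: $j=r$.} Then $w(b)\notin\bigcap_n\pi^nV'=\qq_{r-1}$, so $\val(w(b))$ lies in the least nonzero convex subgroup $\Delta$ of $\Gamma'$; since $\val(\pi)$ is the least positive element of $\Gamma'$ it generates $\Delta\cong\ZZ\,\val(\pi)$, whence $\val(w(b))=m\,\val(\pi)$ for some $m\ge1$, and $v:=\pi^{m+1}\in V$ satisfies $\val(v)>\val(w(b))$. For every $n$ the ring $V'/v^nV'=V'/\pi^{n(m+1)}V'$ has finitely many ideals (as $\val(\pi)$ is the least positive value of $V'$), hence is Noetherian; the map $V/v^nV\to V'/v^nV'$ is flat ($V'$ is $V$-torsion-free) with unique fibre $V'/\pi V'=k'$, which, $k'/k$ being separable, is geometrically regular over $k$; so $V/v^nV\to V'/v^nV'$ is ind-smooth by Popescu's criterion \cite[1.8]{P}. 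With this single element $v$ the proof of Proposition \ref{fd-ext} now goes through verbatim — it uses only the existence of one $v\in V$ with $\val(v)>\val(b)$ and the ind-smoothness of these truncations, not the full cofinality assumption — yielding a smooth $V$-algebra $S$ with $B\to S\to V'$. By Lemma \ref{factor-through}, $V\to V'$ is ind-smooth.

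\emph{Where the difficulty lies.} When $V'$ has rank $>1$ it is non-Noetherian, so Popescu's criterion cannot be applied to $V\to V'$ directly, and the completions of Section~2 are immediate extensions and hence do not lower the rank. Cases~A and~B are engineered around this: Case~A pushes the non-smooth locus up the finite chain $\Spec V'$ into primes where $\pi$ is invertible, so that Zariski's theorem over a characteristic-zero field takes over, while in Case~B the mod-$\pi^n$ truncations serve as the ``local uniformizations'' and are provided for free by Popescu. The step I expect to require the most care is the verification that the General N\'eron desingularization of Proposition \ref{fd-ext} still functions once the cofinality of $\val(V\setminus\{0\})$ in $\val(V'\setminus\{0\})$ fails — which happens precisely when $V'$ has higher rank — i.e.\ that a single auxiliary element $v$ always suffices.
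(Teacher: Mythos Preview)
Your proof is correct and follows the paper's strategy: reduce to $\dim V'<\infty$, climb the finite chain $\Spec V'$ by using Zariski's uniformization at non-maximal primes (your Case~A matches the paper's first case), and then treat the $\mm'$-primary case separately. The only divergence is in Case~B. The paper observes that $\qq_{r-1}=\bigcap_n\pi^nV'$ is prime, that $V'/\qq_{r-1}$ is a discrete valuation ring with uniformizer $\pi$, and that $V\to V'/\qq_{r-1}$ is a regular morphism of discrete valuation rings, hence ind-smooth by the classical N\'eron desingularization; it then concludes ``as in Proposition~\ref{p}''. You instead note that the truncations $V/\pi^NV\to V'/\pi^NV'$ are flat maps of Artinian local rings with geometrically regular fibre $k'$, hence ind-smooth by Popescu's criterion \cite[1.8]{P}, and feed this into the mechanism of Proposition~\ref{fd-ext}. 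The two are essentially the same computation in disguise, since $\qq_{r-1}\subset\pi^NV'$ forces $V'/\pi^NV'\cong(V'/\qq_{r-1})/\pi^N$; your route has the mild advantage of quoting Proposition~\ref{fd-ext} directly rather than unwinding the proof of Proposition~\ref{p}. Your worry about cofinality is justified but resolves exactly as you say: the proof of Proposition~\ref{fd-ext} invokes cofinality only once, to produce a single $v\in V$ with $\val(v)>\val(b)$, and you have supplied that $v=\pi^{m+1}$ by hand.
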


\begin{proof}  Let $E$, $w$, $H_{E/V}$, $z$  be as in Proposition \ref{p} and we may assume $K'$ is the fraction field of $\Im w$. choose $q'_2\in \Spec V'$ a minimal prime ideal of $w(H_{E/V})V'$. If $q'_2\not =\pi V'$ then using Zariski's Uniformization Theorem  we may change $E,w$ with some $E',w'$ such that $w'(H_{E'/V})V'\not \subset q'_2$. Step by step we arrive to the case when either $w(H_{E/V})V'=V'$, or $w(H_{E/V})V'$ is a $\pi V'$-primary ideal. In the first case, $w$ factors through a localization of $E$ which is smooth.  In the second case, $q'_1=\cap_{i\in {\bf N}} \pi^i V'$ is a prime ideal and the  composite map $V\to V'\to V'/q'_1$ is a regular map of discrete valuation rings and so an ind-smooth map by the classical N\'eron Desingularization. The proof ends by using  Lemma \ref{factor-through}.
 \hfill\ \end{proof}
 
\begin{Corollary} \label{C1} Let $V$ be a discrete valuation ring  extending ${\bf Z}_{(p)}$ with  $p$ a prime number and $V'$ an ultrapower of $V$ with respect to a nonprincipal ultrafilter on $\bf N$. Then $V\subset V'$ is ind-smooth.
\end{Corollary}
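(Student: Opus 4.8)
The plan is to deduce everything from Proposition \ref{p1}: I would check that the extension $V\subset V'$, with $\pi$ still playing the role of local parameter, satisfies its three hypotheses and then invoke it directly. Hypothesis (1) is exactly the assumption on $V$, so the only real content lies in (2) and (3), both of which come down to unwinding the ultrapower construction by \L o\'s's theorem. Throughout, write $V' = V^{\bf N}/\mathcal U$ for the ultrapower and $k := V/\mm$.

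For the structure of $V'$: the properties ``is an integral domain'' and ``for all $a,b$, $a\mid b$ or $b\mid a$'' are first order, so \L o\'s's theorem shows that $V'$ is again an integral valuation ring, in particular local; moreover $[(a_n)_n]\in V'$ is a unit if and only if $a_n$ is a unit in $V$ for $\mathcal U$-almost all $n$, so the maximal ideal $\mm'$ of $V'$ is the ultraproduct of $\mm = \pi V$. Writing a general element of that ultraproduct as $[(\pi u_n)_n] = \pi\cdot[(u_n)_n]$ (taking $u_n = 0$ where $x_n\notin\mm$) gives $\mm' = \pi V'$, which is hypothesis (2); and the diagonal map $V\to V'$ is injective, so $V\subset V'$ is a genuine extension of valuation rings.

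For hypothesis (3), I would identify the residue field of $V'$ with the ultrapower $k^* := k^{\bf N}/\mathcal U$, the map $k\to k^*$ being the (elementary) diagonal embedding. If $\chara k = 0$ separability is automatic. If $\chara k = p$, I would invoke the criterion that $k^*/k$ is separable as soon as every finite subset of $k$ that is $p$-independent over $k^p$ stays $p$-independent over $(k^*)^p$ in $k^*$: indeed, if this holds, one shows separability by taking a $p$-basis $\{t_\lambda\}$ of $k$, noting that a $k$-linear dependence among monomials in the $t_\lambda^{1/p}$ would, after raising to the $p$-th power, exhibit a finite $p$-dependence over $(k^*)^p$. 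Now for a fixed tuple $b_1,\dots,b_n\in k$ the assertion ``the $p^n$ monomials $\prod_j b_j^{e_j}$ with $0\le e_j<p$ are linearly independent over the subring of $p$-th powers'' is a first-order statement with parameters in $k$, hence preserved by the elementary embedding $k\prec k^*$; since $p$-independence has finite character, every $p$-independent subset of $k$ indeed remains $p$-independent in $k^*$, so $k^*/k$ is separable. With (1), (2), (3) verified, Proposition \ref{p1} gives that $V\to V'$ is ind-smooth.

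The step I expect to be the main obstacle is the separability verification in residue characteristic $p$: one must recognize $p$-independence of a finite tuple as a first-order property so that \L o\'s's theorem (equivalently, elementarity of the ultrapower) applies, and combine this with the standard facts that $p$-independence has finite character and controls separability. Everything else is routine ultrapower bookkeeping, and in particular no use is made of $V'$ being Noetherian — which it is not, in general.
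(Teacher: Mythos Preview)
Your proposal is correct and follows the same route as the paper: apply Proposition~\ref{p1} after checking its hypotheses for the ultrapower. The paper's proof is a single sentence noting only that the maximal ideal of $V$ generates that of $V'$; your verification of hypothesis~(3) via the first-order nature of $p$-independence is a welcome addition that the paper leaves entirely implicit.
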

For the proof note that the maximal ideal of $V$ generates the maximal ideal of $V'$ and apply the above proposition.
 
 \begin{Proposition} \label{p2} Let $V$ be a discrete valuation ring  extending ${\bf Z}_{(p)}$ with  $p$ a prime number and $V\subset V'$ an extension of valuation rings such that 
 \begin{enumerate}
 \item $p$ is a local parameter of $V$,
 \item   $p V'$ is a $\mm'$-primary ideal of $V'$, where $\mm'$ is the maximal ideal of $V'$,

\item the residue field extension of $V\subset V'$ is separable.
\end{enumerate}
Then $V'$ is a filtered direct limit of regular local rings essentially of finite type over $V$. 
 \end{Proposition}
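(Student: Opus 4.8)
The plan is to push the argument of Proposition \ref{p1} as far as it goes and then to isolate, and resolve separately, the single situation it does not address: when the non‑smooth locus concentrates at $\mathfrak m'$ while $V'$ fails to be $\pi$‑adic. As in Proposition \ref{p1} one first replaces $V'$ by $V'\cap F$ for a finitely generated subfield $F\supseteq\Frac(V)$, reducing to $\dim V'<\infty$ and $\Gamma'$ of finite rational rank; conditions (1) and (3) persist, and $\Frac(V')/\Frac(V)$ is automatically separable since $\operatorname{char}=0$. By the usual colimit formalism it then suffices to show that every $w\colon E\to V'$ with $E$ finitely presented over $V$ factors through a regular local ring essentially of finite type over $V$. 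Put $q_1':=\bigcap_n\pi^nV'$. Hypothesis (1) forces $q_1'$ to be prime, $q_1'\cap V=0$, $\height(\mathfrak m'/q_1')=1$, and (by comparing value cones) $V'[1/\pi]=(V')_{q_1'}$; the latter therefore contains the characteristic‑zero field $\Frac(V)=V[1/\pi]$ and, by Zariski's Theorem \ref{z}, is a filtered direct limit of smooth $\Frac(V)$‑subalgebras, i.e.\ of regular local rings essentially of finite type over $V$.

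Next one moves the non‑smooth locus up the chain of primes exactly as in the proof of Proposition \ref{p}. Let $q_2'$ be a minimal prime of $w(H_{E/V})V'$, nonzero by separability; since the primes of $V'$ are totally ordered and nothing lies strictly between $q_1'$ and $\mathfrak m'$, either $q_2'\subseteq q_1'$ or $q_2'=\mathfrak m'$. In the first case $q_2'\cap V=0$, so $V'_{q_2'}$ is ind‑smooth over $\Frac(V)$ by Theorem \ref{z}, and running the argument of Proposition \ref{p} verbatim --- now with base field $\Frac(V)$ and using Lemma \ref{k} and Lemma \ref{delocalize} --- replaces $E,w$ by $E'',w''$ for which the minimal prime of $w''(H_{E''/V})V'$ is strictly larger, or for which $w''(H_{E''/V})V'=V'$. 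As $\dim V'<\infty$, after finitely many steps one reaches either $w(H_{E/V})V'=V'$ (then $w$ factors through a $V$‑smooth localization of $E$ and we are done) or the case $q_2'=\mathfrak m'$, i.e.\ $w(H_{E/V})V'$ is $\mathfrak m'$‑primary.

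The remaining case is governed by the rank‑one valuation ring $\bar V':=V'/q_1'$ of mixed characteristic, with $V\hookrightarrow\bar V'$ and separable residue extension; one resolves $\bar V'$ and then glues the already‑resolved $(V')_{q_1'}$ back in along the composite description $V'=\{x\in (V')_{q_1'}:\bar x\in\bar V'\}$. For $\bar V'$ itself, restricting further to $\bar V'\cap\Frac(B)$ (with $B=w(E)\cdot V$) lets us assume the value group is finitely generated, $\cong\mathbb Z^s$, and the residue field $\kappa'$ finitely and separably generated over $\kappa_V$. Passing to the completion $\widehat{\bar V}'$ (legitimate by the argument of Proposition \ref{pass-to-hat}, with ``regular essentially of finite type'' in place of ``smooth'', as the relevant fraction‑field extension is separable), one invokes the Cohen coefficient structure: $\widehat{\bar V}'$ contains a complete DVR $\Lambda$ with residue field $\kappa'$ and $\mathfrak m_\Lambda=p\Lambda$, which can be chosen to contain a Cohen subring $\Lambda_0$ for $\kappa_V$. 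By separability of $\kappa'/\kappa_V$ the map $\Lambda_0\to\Lambda$ is regular, hence ind‑smooth by General N\'eron Desingularization, so the complete DVR $\Lambda_1:=\Lambda\otimes_{\Lambda_0}\widehat V$ (where $\widehat V=\Lambda_0[\pi]/(\text{Eisenstein})$ is the completion of $V$) is ind‑smooth over $\widehat V$, hence over $V$ since $V\to\widehat V$ is ind‑smooth. Now $\widehat{\bar V}'$ contains $\Lambda_1$ with unchanged residue field and value‑group extension $\mathbb Z\,\val(\pi)\subseteq\Gamma_{\bar V'}$; choosing monomial generators $w_1,\dots,w_s$ realizing a $\mathbb Z$‑basis of $\Gamma_{\bar V'}$, the induced monomial valuation ring $V''_0:=\widehat{\bar V}'\cap\Frac(\Lambda_1)(w_1,\dots,w_s)$ is a filtered direct limit of regular (toric, i.e.\ smooth after unimodular subdivision) local rings essentially of finite type over $\Lambda_1$, hence over $V$, while $\widehat{\bar V}'/V''_0$ is an \emph{immediate} extension. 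This last extension is then handled in the spirit of Proposition \ref{p0}: its transcendental part is dealt with by Lemma \ref{imm-ind-smooth} (which needs no characteristic hypothesis), and, together with the fact that its algebraic part can only be an essentially‑finite‑type extension over the excellent ring $\Lambda_1$, one concludes that $\widehat{\bar V}'$ --- and, after descending and gluing, $V'$ --- is a filtered direct limit of regular local rings essentially of finite type over $V$.

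The main obstacle is this last step. Producing the Cohen coefficient ring and the monomial generators of $\widehat{\bar V}'$, carrying out the toric local uniformization over the mixed‑characteristic base $\Lambda_1$, and then controlling the residual immediate extension $\widehat{\bar V}'/V''_0$ --- in particular coping with the possibility of \emph{defect} in mixed characteristic, which is precisely why one must settle for ``regular local rings essentially of finite type'' rather than ``smooth algebras'' --- is the technical heart; here separability of the residue field (needed to N\'eron‑desingularize $\Lambda_0\to\Lambda$) and the excellence of the Cohen‑type base enter essentially. The final gluing of the ``open part'' $(V')_{q_1'}$ with the ``rank‑one part'' $\bar V'$ into a single regular local ring essentially of finite type over $V$ is comparatively routine, but still requires care to keep everything regular and of finite type over $V$.
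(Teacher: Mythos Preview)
Your reduction to finite Krull dimension and the ``pushing the non-smooth locus up to $\mathfrak m'$-primary'' part agree with the paper. The divergence is in the final step, and there the paper's argument is both shorter and essentially different from yours. Once $w(H_{E/V})V'$ is $\mathfrak m'$-primary, hypothesis~(1) gives that some $p^s$ is itself standard for $E$ over $V$. The paper then works modulo $p^{3s}$: since $V'/p^{3s}V'$ has residue field separable over that of $V$, one can (citing \cite[Theorem~3.6]{P2'}) produce a local essentially smooth $V$-algebra $G$ and an element $b\in G$ such that $E/p^{3s}E\to V'/p^{3s}V'$ factors through $G/(p^{3s},\,p-b)$. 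A variant of Lemma~\ref{k} in the spirit of \cite[Proposition~3.4]{P2'} then factors $w$ through a local essentially smooth $D$-algebra $D'$ for $D=G/(p-b)$; since $G$ is regular and $p-b$ is part of a regular system of parameters, $D$ and hence $D'$ are regular local. No passage to $\bar V'=V'/q_1'$, no Cohen rings, no toric step, no immediate extensions, and no gluing are needed.

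Your alternative route has two genuine gaps. First, the immediate extension $\widehat{\bar V'}/V_0''$ in mixed characteristic is not under control: Lemma~\ref{imm-ind-smooth} handles transcendental pseudo-limits, but the algebraic part is exactly where defect appears (see the Remark after Lemma~\ref{immalg-ind-smooth}), and the sentence ``its algebraic part can only be an essentially-finite-type extension over the excellent ring $\Lambda_1$'' is neither justified nor sufficient to conclude that the intermediate valuation rings are filtered limits of \emph{regular} local rings. Second, the ``gluing'' of a factorization into $(V')_{q_1'}$ with one into $\bar V'$ to obtain a single $E\to R\to V'$ with $R$ regular local essentially of finite type over $V$ is asserted to be routine but is not: Lemma~\ref{delocalize} goes the other way (from a factorization at a localization to a global one that is only \emph{finitely presented}, with smoothness controlled after localizing), and nothing in the paper provides a mechanism to patch two regular models over a composite valuation. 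The paper sidesteps both issues by never leaving $V'$ and exploiting that the standard element is a power of $p$.
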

 
 \begin{proof} As in Proposition \ref{p1} we may consider $w:E\to V'$ and we may reduce to the case when  
  $w'(H_{E'/V})V'$ is $\mm'$-primary ideal. We may assume that $p^s$ is a standard for $E$ over $V$ for some $s\in {\bf N}$ and as in the proof of \cite[Theorem 3.6]{P2'} there exists a local essentially smooth $V$-algebra $G$ and $b\in G$ such that the map $E/p^{3s}E\to V'/p^{3s}V'$ factors through $G/(p^{3s},p-b)$. Then a variant of Lemma \ref{k} in the idea of \cite[Proposition 3.4]{P2'} shows that $w$ factors through a local essentially smooth $D=G/(p-b)$-algebra $D'$. This $D'$ is  regular local since $D$ is so. Now apply Lemma \ref{factor-through}.  
  \hfill\ \end{proof}
 
\vskip 0.5 cm

\section{Structure of equicharacteristic valuation rings possessing a cross-section}

Modulo all the reductions and simplifications that go into the overall proof of Theorem \ref{m}, our ultimate source of expressions of valuation rings as filtered direct limits of smooth rings is Lemma \ref{complete-lem} below. This lemma describes some valuations on an affine space for which local uniformizations can be constructed by successively blowing up regular centers as in \cite[4.5, 4.19]{Sha} following Perron's algorithm (whose relevance to the resolution of singularities was explained already in \cite{Zar}). We present a more direct argument for this uniformization that is close to \cite[Lemma 4.6]{Po1} and rests on the following lemma that captures the ``combinatorial'' part of local uniformization.

We will need the following lemma (see \cite[2.2]{Ell}, or \cite[4.6.1]{Po1}, or \cite[6.1.30]{GR}).

\begin{Lemma} \label{EPGR-lem}
For a totally ordered abelian group $\Gamma$, the submonoid $\Gamma_{\geq 0} \subset \Gamma$ of nonnegative elements is a filtered  union of its finite free submonoids isomorphic to ${\bf Z}_{\geq 0}^r$,  where $r \in { \bf Z}_{\geq 0}$ need not be constant. 
\end{Lemma}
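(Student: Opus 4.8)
The plan is to prove Lemma \ref{EPGR-lem} by a direct order-theoretic argument, producing for each finite set of elements of $\Gamma_{\geq 0}$ a finite free submonoid of $\Gamma_{\geq 0}$ that contains it, and then checking that these submonoids are directed under inclusion. The key observation is that any finitely generated submonoid $M \subset \Gamma_{\geq 0}$, while possibly not free, generates a finitely generated subgroup $G = M - M \subset \Gamma$; since $\Gamma$ is torsion-free (being totally ordered), $G \cong {\bf Z}^d$ for some $d$, and the restriction of the order of $\Gamma$ makes $G$ a totally ordered abelian group of finite rank. So it suffices to treat the case $\Gamma = {\bf Z}^d$ with an arbitrary total order compatible with the group structure, and to show that every finitely generated submonoid of $\Gamma_{\geq 0}$ is contained in a finite free submonoid of $\Gamma_{\geq 0}$, in a way that is compatible with enlarging the finite generating set.

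First I would recall the classification of total orders on ${\bf Z}^d$ (equivalently on ${\bf R}^d$, after tensoring): such an order is given by a flag of subspaces, or concretely by choosing a basis so that an element is positive iff its leading nonzero coordinate (in a suitable lexicographic sense determined by finitely many ${\bf R}$-linear functionals $\ell_1, \dots, \ell_d$ of decreasing priority) is positive. Equivalently, $\Gamma_{\geq 0}$ is cut out inside ${\bf Z}^d$ by finitely many ``lexicographic'' halfspace conditions. Given a finite set $S \subset \Gamma_{\geq 0}$ I would then invoke Perron's algorithm, or rather the elementary fact behind it: one can perform a finite sequence of unimodular changes of coordinates on ${\bf Z}^d$ — each replacing a basis vector by itself plus a nonnegative combination of the others, mimicking the blow-ups in \cite[4.5, 4.19]{Sha} — after which all the chosen elements of $S$ become nonnegative combinations of the new basis vectors, and the new basis vectors themselves lie in $\Gamma_{\geq 0}$. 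The submonoid ${\bf Z}_{\geq 0}^{d}$ on the resulting basis is then a finite free submonoid of $\Gamma_{\geq 0}$ containing $S$. This is exactly the content of \cite[2.2]{Ell}, \cite[4.6.1]{Po1}, \cite[6.1.30]{GR}, so at this point I would be content to cite those references for the combinatorial core, as the lemma statement itself does.

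Finally I would address directedness, which is the only real subtlety. Given two finite free submonoids $F_1 \cong {\bf Z}_{\geq 0}^{r_1}$ and $F_2 \cong {\bf Z}_{\geq 0}^{r_2}$ of $\Gamma_{\geq 0}$, I would apply the construction above to a finite generating set of $F_1 + F_2$ (a finitely generated submonoid of $\Gamma_{\geq 0}$) to obtain a finite free submonoid $F_3 \subset \Gamma_{\geq 0}$ containing both $F_1$ and $F_2$; hence the family of finite free submonoids of $\Gamma_{\geq 0}$ is filtered, and its union is visibly all of $\Gamma_{\geq 0}$ because every single element $\gamma \in \Gamma_{\geq 0}$ lies in the free submonoid produced from $S = \{\gamma\}$. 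The main obstacle is the reduction to $\Gamma = {\bf Z}^d$ together with the bookkeeping in Perron's algorithm: one must be careful that the coordinate changes preserve positivity of the new basis vectors and terminate after finitely many steps, which uses the discreteness of ${\bf Z}^d$ and an induction on a suitable measure of the ``defect'' of the current basis relative to the order (this is precisely where the blow-up analogy and \cite{Sha} enter). Since the cited references carry this out in detail, I would keep the proof short and defer to them for that step.
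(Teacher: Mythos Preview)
The paper does not give a proof of this lemma at all: it merely states it and cites \cite[2.2]{Ell}, \cite[4.6.1]{Po1}, and \cite[6.1.30]{GR}. Your proposal ultimately defers to exactly the same references for the combinatorial core (the Perron-type unimodular coordinate changes), so in that sense you are doing precisely what the paper does, only with an added expository sketch of what those references contain. Your outline---reduce to a finitely generated subgroup $G \cong {\bf Z}^d$, use the order on $G$ inherited from $\Gamma$, run Perron's algorithm to absorb a given finite set into a free submonoid on a positive basis, and then observe directedness by applying the same step to the union of two generating sets---is a faithful summary of the standard argument in the cited sources, and the directedness paragraph is correct and worth making explicit.
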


We include a mixed characteristic version of  the following lemma because it requires virtually no additional effort in comparison to the equicharacteristic case  that we will use below.

\begin{Lemma} \label{complete-lem} 
\begin{enumerate}
\item 
For a field $\bf F$, a valuation ring ${\bf F} \subset V$ with fraction field ${\bf F}(x_1, \dotsc, x_n)$ such that $\val(x_1)$, $\dots$, $\val(x_n)$ are $\bf Z$-linearly independent is a countable direct union of essentially smooth~$\bf F$-algebras.

\item 
For a discrete valuation ring $\Lambda$ with uniformizer $\pi$ and fraction field $\bf F$, a valuation ring $\Lambda \subset V$ that dominates $\Lambda$ and has fraction field ${\bf F}(x_1, \dotsc, x_n)$ such that $\val(\pi), \val(x_1), \dots, \val(x_n)$ are $\bf Z$-linearly independent is a countable direct union of regular local $\Lambda$-algebras of the form
\[
 (\Lambda[Y_1, \dots, Y_{n + 1}]/(\pi - Y_1^{b_1}\cdots Y_{n + 1}^{b_{n + 1}}))_{(Y_1,\, \dotsc,\, Y_{n + 1})} \ \ \mbox{with} \ \ \gcd(b_1, \dotsc, b_{n + 1}) = 1.
\]
\end{enumerate}
\end{Lemma}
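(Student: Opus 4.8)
The idea is to reduce both parts to a concrete description of the valuation in coordinates and then to realize $V$ as an increasing union of localizations of monomial-type rings, using Perron's algorithm in the style of Zariski. For part~(1), since $\mathbf{F}\subset V$ and $\val(x_1),\dots,\val(x_n)$ are $\mathbf{Z}$-linearly independent, the subgroup $\Gamma_0=\sum_{i}\mathbf{Z}\val(x_i)$ is free of rank $n$, and (after possibly inverting some $x_i$) we may assume all $\val(x_i)>0$; the key point is that $V$ restricted to the subfield $\mathbf F(x_1,\dots,x_n)$ is \emph{monomial} with respect to these generators in the sense that $\val$ of a Laurent monomial $\prod x_i^{a_i}$ is $\sum a_i\val(x_i)$ and, because the $\val(x_i)$ are independent, there are no cancellations, so $\val(f)=\min$ of the values of the monomials occurring in $f$. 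I would first establish this ``no cancellation'' lemma cleanly. Then Lemma~\ref{EPGR-lem} lets me write $\Gamma_{0,\ge 0}$ (or rather the relevant submonoid) as a filtered union of finite free submonoids $M_\lambda\cong\mathbf{Z}_{\ge 0}^r$; each such $M_\lambda$ is generated by monomials $y_1=\prod x_i^{c_{1i}},\dots,y_r=\prod x_i^{c_{ri}}$, and the associated subring $\mathbf F[y_1,\dots,y_r]$ localized at the prime $\{$positive value$\}$ is essentially smooth over $\mathbf F$ (it is a localization of a polynomial ring, since the $\val(y_j)$ remain independent so the $y_j$ are algebraically independent). The claim is that $V=\bigcup_\lambda \mathbf F[y_1,\dots,y_r]_{\mathfrak p_\lambda}$: every element of $V$ lies in $\mathbf F(x_1,\dots,x_n)$, hence is a ratio $f/g$ of polynomials, and for $\lambda$ large enough all monomials of $f,g$ lie in $M_\lambda$; writing $f=(\text{monomial of minimal value})\cdot(\text{unit in }\mathbf F[y]_{\mathfrak p_\lambda})$ and similarly for $g$, and using that the ratio has nonnegative value, one sees $f/g$ already lies in the localization. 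Countability of the union is arranged because $\Gamma_0$ has finite rank, so only countably many $M_\lambda$ are needed to exhaust.

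\textbf{Part (2).} Here the new feature is the mixed characteristic generator $\pi$. Now the relevant group is $\Gamma_1=\mathbf Z\val(\pi)\oplus\bigoplus_i\mathbf Z\val(x_i)$, free of rank $n+1$, and the same ``no cancellation'' phenomenon holds for the valuation on $\mathbf F(x_1,\dots,x_n)=\Frac(\Lambda)(x_1,\dots,x_n)$ when elements are written as $\Lambda$-linear combinations (or rather combinations over a subring playing the role of the base) of monomials in $x_1,\dots,x_n$: the value of such an element is the minimum of $\val(\pi^{m})\cdot(\text{unit})\prod x_i^{a_i}$ over the monomials appearing, because the $n+1$ values are independent. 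I would make this precise and then again invoke Lemma~\ref{EPGR-lem} to write the relevant submonoid of $\Gamma_{1,\ge 0}$ as a filtered union of finite free submonoids; choosing generators of such a submonoid that include a representative expressible via $\pi$, one is led to rings of the stated shape $(\Lambda[Y_1,\dots,Y_{n+1}]/(\pi-Y_1^{b_1}\cdots Y_{n+1}^{b_{n+1}}))_{(Y_1,\dots,Y_{n+1})}$ with $\gcd(b_1,\dots,b_{n+1})=1$ — the $\gcd$ condition is exactly what makes this ring regular local (it is the completion-free version of saying the hypersurface is a regular point), and it can be arranged because the $Y_j$ can be taken to correspond to a $\mathbf Z$-basis of a finite-index subgroup of a free group containing $\val(\pi)$ primitively, forcing the exponents expressing $\val(\pi)$ to be coprime. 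Then one checks, as in part~(1), that $V$ is the increasing union of these localizations.

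\textbf{Main obstacle.} The technical heart is the ``no cancellation'' lemma and its consequence that $\val(f/g)$ is computed monomial-by-monomial, together with the verification that an element of $V$ of nonnegative value actually lands in the localized monomial subring rather than merely in its fraction field — i.e., that the ``unit'' factors are genuinely units in the localization and not just in $V$. This requires knowing that the localization map $\mathbf F[y_1,\dots,y_r]_{\mathfrak p_\lambda}\hookrightarrow V$ identifies the maximal ideal with $\mathfrak m_V\cap(\cdot)$, which follows because both are determined by ``positive value,'' but it must be spelled out. A secondary subtlety in part~(2) is bookkeeping the $\gcd=1$ normalization: one must argue that among the finite free submonoids exhausting $\Gamma_{1,\ge0}$ one can choose cofinally many whose generating set yields a \emph{primitive} relation for $\val(\pi)$, which is where the freeness and the independence of $\val(\pi)$ from the $\val(x_i)$ are used. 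I expect everything else — algebraic independence of the chosen monomials, essential smoothness/regularity of the resulting rings, countability of the union — to be routine given Lemma~\ref{EPGR-lem} and the standard structure theory of localizations of polynomial and monomial rings.
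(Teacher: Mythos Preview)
Your plan is correct and matches the paper's proof: both identify the value group as the free group on the $\val(x_i)$ (together with $\val(\pi)$ in part~(2)), exhaust $\Gamma_{\ge 0}$ by free submonoids via Lemma~\ref{EPGR-lem}, form the corresponding localized monomial subrings, and use the ``no cancellation'' observation (distinct monomials have distinct values) to show every element of $V$ eventually lies in one of them. For the $\gcd=1$ point the paper simply starts the chain of submonoids with the one generated by $\gamma_1,\dots,\gamma_{n+1}$, so that every later submonoid $\mathbf{Z}$-spans all of $\Gamma$ and the primitivity of $\val(\pi)$ in $\Gamma$ forces coprime exponents automatically---this is the clean resolution of what you flagged as the ``secondary subtlety.''
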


\begin{proof}
To avoid repeating the argument, we will prove both claims simultaneously, so in (1) we set $\Lambda = {\bf F}$ and $\pi = 0$  and in both parts we set $p = $Char$(\Lambda/(\pi))$. By \cite[Theorem 1 in VI (10.3)]{Bou}, 
\[
\gamma_1 = \val(x_1), \ \  \dots, \ \ \gamma_n = \val(x_n), \ \ \gamma_{n + 1} = \val(\pi) \ \ (\mbox{resp. ~$\gamma_{n + 1}=  0$ if $\pi = 0$})
\]
satisfy $\Gamma \cong {\bf Z} \gamma_1 \oplus \cdots \oplus {\bf Z}\gamma_{n + 1}$, where $\Gamma$ is the value group of $V$.
We set $N = n + 1$ (resp.,~$N = n$ if $\pi = 0$) and use Lemma \ref{EPGR-lem} to find a countable sequence $\Gamma_0 \subset \Gamma_1 \subset \dots$ of submonoids of $\Gamma_{\geq 0}$ with $\Gamma_i \simeq {\bf Z}^N_{\geq 0}$ for each $i$ and $\Gamma_{\geq 0} = \bigcup_{i \geq 0} \Gamma_i$. We fix a ${\bf Z}_{\geq 0}$-basis $\nu_{i1}, \dots, \nu_{i N}$ of $\Gamma_i$ with $(\nu_{01}, \dots, \nu_{0N}) = (\gamma_1, \dots, \gamma_N)$, so that the elements $\nu_{i1}, \dots, \nu_{i N}$ are $\bf Z$-linearly independent in $\Gamma$, and we express them in terms of the fixed $\bf Z$-basis:
\[
\nu_{ij} =  d_{ij 1 } \gamma_1 + \dots + d_{ij N } \gamma_N \ \ \mbox{for unique} \ \ d_{ij 1 }, \dots, d_{ij N } \in {\bf Z} \ \ \mbox{and every} \ \ j = 1, \dots, N.
\]
We set $x_{n + 1} = \pi$ and note that, by  construction, for each $i \geq 0$ and $1 \leq j \leq N$, the element

$y_{ij} = x_1^{d_{ij1}} \cdots x_N^{d_{ijN}} \in {\bf F}(x_1, \dots, x_n) \ \ \mbox{has valuation} \ \ \nu_{ij}.$

Since $\Gamma_{i'} \subset \Gamma_{i}$ for $i' < i$, each $y_{i'j}$ is in a unique way a monomial in the elements $y_{i1}, \dotsc, y_{iN}$: 
\[
\mbox{if we express} \ \  \nu_{i'j} = b_{i'i1} \nu_{i1} + \dots + b_{i'iN} \nu_{iN} \ \ \mbox{with}\ \
 b_{i'ij} \in {\bf Z}_{\geq 0}, \ \ \mbox{then} \ \ y_{i'j} = y_{i1}^{b_{i'i1}} \cdots y_{iN}^{b_{i'iN}}.
\]
Since the valuations of $y_{i1}, \dots, y_{iN}$ are $\bf Z$-linearly independent, the $\Lambda$-subalgebra $\Lambda[y_{i1}, \dots, y_{iN}]$ of ${\bf F}(x_1, \dots, x_n)$ is the regular ring 
\[
\Lambda[Y_{i1}, \dots, Y_{iN}]/(\pi - Y_{i1}^{b_{1}} \cdots Y_{iN}^{b_{N}}) \ \ \mbox{with} \ \ b_i = b_{0Ni} \ \  (\mbox{resp.}\ \ \Lambda[Y_{i1}, \dots, Y_{iN}] \ \ \mbox{if} \ \ \pi = 0),
\]
where $\gcd(b_{1}, \dots, b_{N}) = 1$ because $\val(\pi)$ is assumed to be a primitive element of $\Gamma$ when $\pi \neq 0$. In particular, we obtain a nested sequence of $\Lambda$-subalgebras 
\[
R_i = \Lambda[y_{i1}, \dots, y_{iN}]_{(y_{i1},\, \dots,\, y_{iN})} \subset V
\]
that are regular (resp.,~essentially smooth if $\pi = 0$) and it remains to argue that every $f \in V$ belongs to some $R_i$. For this, we first express $f$ as a rational function as follows:
\[
 f = (\sum \lambda_{s_1,\, \dots,\, s_N} x_1^{s_1}\cdots x_N^{s_N})/( \sum \lambda'_{r_1,\, \dots,\, r_N} x_1^{r_1} \cdots x_N^{r_N}) \ \ \mbox{with} \ \ \lambda_{s_1,\, \dots,\, s_N}, \lambda'_{r_1,\, \dots,\, r_N} \in \Lambda^\times \cup \{ 0 \}.
\]
The linear independence of the $\gamma_i$ ensures that the valuations of the monomials that appear in the numerator (resp.,~denominator) are all distinct. Thus, by taking out the monomials with minimal valuations, we reduce to showing that every $x_1^{\alpha_1} \cdots x_N^{\alpha_N}$ with $\alpha_j \in {\bf Z}$ and $\alpha_1\gamma_1 + \dots + \alpha_N\gamma_N > 0$ is a product of nonnegative powers of the elements $y_{i1}, \dots, y_{iN}$ for some $i \geq 0$. For this, it suffices to note that $\alpha_1\gamma_1 + \dots + \alpha_n\gamma_n$ lies in some $\Gamma_i$, and then to express it as a ${\bf Z}_{\geq 0}$-linear combination of the $\nu_{i1}, \dots, \nu_{i N}$: more precisely, if $\alpha_1\gamma_1 + \dots + \alpha_N\gamma_N = c_1 \nu_{i1} + \dots + c_N \nu_{iN}$ with $c_j \in {\bf Z}_{\geq 0}$, then 
\[
x_1^{\alpha_1} \cdots x_N^{\alpha_N} = y_{i1}^{c_1}\cdots y_{iN}^{c_N}. 
\]
\hfill\ \end{proof}

For a valuation ring $V$ with the value group $\Gamma$ and the fraction field $K$, a \emph{cross-section} of $V$ is a section 
\[
s \colon \Gamma \to K^* \ \ \mbox{in the category of abelian groups of the valuation map} \ \ \val \colon K^* \to \Gamma.
\]
(see for details in the Appendix).

\begin{Proposition} \label{CS-benefits}
An equicharacteristic valuation ring $V$ that has a cross-section $s \colon \Gamma \to K^*$ and a subfield $k \subset V$ lifting the residue field is an immediate extension 
\[
 V_0 = \bigcup_i V_i \subset V
\]
of a filtered  union of valuation subrings $V_i \subset V$ dominated by $V$ such that each $V_i$ has a finitely generated value group, is a countable increasing  union of localizations of smooth $k$-subalgebras of $V$ {so $V_0$ is ind-smooth over $k$}, and has the restriction of $s$ as a cross-section.
\end{Proposition}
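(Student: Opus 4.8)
The idea is to construct the $V_i$ explicitly as the valuation subrings generated over $k$ by cross-section elements of a finitely generated piece of $\Gamma$. Concretely, I would write $\Gamma = \bigcup_i \Gamma_i$ as a filtered increasing union of finitely generated subgroups $\Gamma_i \subset \Gamma$ (a totally ordered abelian group is the filtered union of its finitely generated subgroups, each of which is free since torsion-free), and set $K_i = k(s(\Gamma_i)) \subset K$ and $V_i = V \cap K_i$. Since $s$ is a group homomorphism, $s(\Gamma_i)$ is generated by $s(\gamma_{i,1}), \dots, s(\gamma_{i,r_i})$ for a $\bf Z$-basis $\gamma_{i,1}, \dots, \gamma_{i,r_i}$ of $\Gamma_i$, so $K_i = k(s(\gamma_{i,1}), \dots, s(\gamma_{i,r_i}))$, and the valuations $\val(s(\gamma_{i,j})) = \gamma_{i,j}$ are $\bf Z$-linearly independent. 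Thus $V_i$ is exactly a valuation ring of the type handled by Lemma \ref{complete-lem}(1) with $\bf F = k$: it is a countable direct union of essentially smooth (indeed localizations of smooth) $k$-subalgebras of $V$, hence ind-smooth over $k$. The restriction $s|_{\Gamma_i}\colon \Gamma_i \to K_i^*$ is a cross-section of $V_i$ by construction, and since $s$ is a homomorphism the union $s|_{\Gamma_0}$ with $\Gamma_0 = \bigcup_i \Gamma_i = \Gamma$ is again $s$ itself, which is a cross-section of $V_0 = \bigcup_i V_i$.

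It then remains to verify that $V_0 \subset V$ is immediate and that $V_0$ is a countable increasing union of localizations of smooth $k$-subalgebras. For the value group: every $\gamma \in \Gamma$ lies in some $\Gamma_i$, so $s(\gamma) \in K_i^* \subset V_0$, hence the value group of $V_0$ is all of $\Gamma$ and $\Gamma_{V_0} = \Gamma_V = \Gamma$. For the residue field: $k \subset V_0$ maps onto the residue field of $V_0$ — indeed any unit of $V_0$ has valuation $0$, and dividing by $s(0) = 1$ one sees (using that $k \subset V$ lifts the residue field of $V$ and $V_0 \subset V$ is a local inclusion because $V_0 = V\cap K_0$ with $K_0 = \Frac(V_0)$) that its residue class already lies in $k$; so the residue field of $V_0$ is $k$, matching that of $V$. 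Hence $V_0 \subset V$ is immediate. Finally, each $V_i$ is a countable increasing union of localizations of smooth $k$-subalgebras of $V$ by Lemma \ref{complete-lem}(1), and the $V_i$ form a countable (or, after reindexing, filtered) increasing chain, so a diagonal argument exhibits $V_0 = \bigcup_i V_i$ as such a countable increasing union as well; in particular $V_0$ is ind-smooth over $k$.

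The one point that needs a little care — and is really the only obstacle — is confirming that the inclusion $V_0 \subset V$ is \emph{local} (so that ``immediate'' makes sense) and that the residue field of $V_0$ is genuinely $k$ rather than a proper subfield or extension. This is where the hypotheses that $s$ is a group section and that $k$ lifts the residue field of $V$ are used together: any $f \in V_0$ with $\val(f) = 0$ can be written, after multiplying by suitable $s(\gamma_{i,j})^{\pm 1}$, as a ratio of polynomials in the $s(\gamma_{i,j})$ over $k$ whose leading monomials have valuation $0$; linear independence of the $\gamma_{i,j}$ forces those monomials to be (up to sign) products of $s(\gamma_{i,j})^0 = 1$, i.e. scalars in $k$, so the residue of $f$ lies in $k$. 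The same bookkeeping shows a nonunit of $V_0$ cannot become a unit in $V$, giving locality. Everything else is a direct appeal to Lemma \ref{complete-lem}(1) and the definitions.
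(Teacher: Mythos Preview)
Your proof is correct and follows essentially the same approach as the paper: build the $V_i$ as $V \cap k(s(\Gamma_i))$ for finitely generated pieces $\Gamma_i$ of the value group and invoke Lemma~\ref{complete-lem}(1). The only cosmetic difference is that the paper filters $\Gamma_{\ge 0}$ by free submonoids via Lemma~\ref{EPGR-lem} rather than $\Gamma$ by finitely generated subgroups, but since $s$ is a group homomorphism these yield the same fields $K_i$ and the same $V_0$; your residue-field and locality discussion in the last paragraph is more elaborate than needed (the sandwich $k \subset k(V_0) \subset k(V) = k$ suffices), but not incorrect.
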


\begin{proof}
By Lemma \ref{EPGR-lem}, the submonoid $\Gamma_{\ge 0} \subset \Gamma$ of positive elements is a filtered  union $\Gamma_{\geq 0} = \bigcup_{i} \Gamma_i$ of submonoids $\Gamma_i \simeq {\bf Z}_{\geq 0}^{d_i}$ with $d_i \geq 0$. Thus, the cross-section $s$ gives rise to the filtered  system of subfields $k_i = k(s(\gamma)\, \vert\, \gamma \in \Gamma_i)$ of the field of fractions $K$ of $V$. By choosing a ${\bf Z}_{\geq 0}$-basis for $\Gamma_i$ and applying \cite[Theorem 1, in VI section 10]{Bou} we see that each $k_i$ is a purely transcendental extension of $k$ and that the value group of the valuation subring $V_i = V \cap k_i$ of $V$ is ${\bf Z}^{d_i} \simeq {\bf Z}\Gamma_i \subset \Gamma$. By construction, $s$ restricts to a cross-section of $V_i$ and, by Lemma \ref{complete-lem}, each $V_i$ is a filtered  union of localizations of $k$-subalgebras. The construction ensures that $V$ is an immediate extension of the resulting $V_0$.
\end{proof}

 \section{ Counterexamples when the value groups are finitely generated}

\begin{Lemma}\label{Lemma 5.1} Let $V \subset V'$ be an extension of valuation rings which is ind-smooth. Then $\Omega_{V'/V}$, that
is $ H_0(V, V',V')$ in terms of Andre-Quillen homology, is a flat $V'$-module and $H_1(V, V',V')
= 0$ (the
last homology is usually  denoted by $\Gamma_{V'/V}$ ).
\end{Lemma}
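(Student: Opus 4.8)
The plan is to reduce to the definition of ind-smoothness and the compatibility of André--Quillen homology with filtered colimits. First I would write $V'$ as a filtered direct limit $V' = \varinjlim_i S_i$ of smooth, in particular finitely presented, $V$-algebras $S_i$. Since André--Quillen homology commutes with filtered colimits in the algebra variable (and base change of coefficients from $S_i$ to $V'$ along the flat maps in the system is harmless), we get $H_n(V, V', V') \cong \varinjlim_i H_n(V, S_i, V') \cong \varinjlim_i \bigl(H_n(V, S_i, S_i) \otimes_{S_i} V'\bigr)$ for all $n$, the last isomorphism because each $S_i$ is smooth, hence $V$-flat, so the relevant base-change spectral sequence degenerates. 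For a smooth, hence formally smooth, $V$-algebra $S_i$ one has $H_n(V, S_i, M) = 0$ for all $n \geq 2$ and all $M$, and $H_1(V, S_i, M) = 0$ as well, while $H_0(V, S_i, S_i) = \Omega_{S_i/V}$ is a finitely generated projective $S_i$-module. Taking the colimit gives $H_1(V, V', V') = 0$ immediately.

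Next I would handle the flatness of $\Omega_{V'/V}$. From the above, $\Omega_{V'/V} = H_0(V, V', V') \cong \varinjlim_i \bigl(\Omega_{S_i/V} \otimes_{S_i} V'\bigr)$, where each $\Omega_{S_i/V}$ is projective (locally free of finite rank) over $S_i$, so each term $\Omega_{S_i/V} \otimes_{S_i} V'$ is a projective, hence flat, $V'$-module. The transition maps in this system are maps of $V'$-modules, and a filtered colimit of flat modules over any ring is flat; therefore $\Omega_{V'/V}$ is a flat $V'$-module. Alternatively, and perhaps more cleanly for the exposition, I would invoke the Jacobi--Zariski exact sequence together with the fact that over a valuation ring flatness equals torsion-freeness (as already used via Lemma \ref{fp-lemma} in the paper): the colimit description makes torsion-freeness of $\Omega_{V'/V}$ transparent because each $\Omega_{S_i/V} \otimes_{S_i} V'$ embeds torsion-freely.

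I do not expect a serious obstacle here; the statement is essentially the soft part of the transitivity/base-change formalism for André--Quillen homology, and the only point requiring a little care is the base-change step $H_n(V, S_i, V') \cong H_n(V, S_i, S_i) \otimes_{S_i} V'$, which needs the flatness of $S_i$ over $V$ (guaranteed by smoothness) so that there are no higher Tor contributions; this is the standard flat base-change statement for André--Quillen homology. The rest is bookkeeping with filtered colimits. If one wishes to avoid citing the full machinery, the vanishing $H_1(V, V', V') = 0$ can also be seen directly from the characterization of ind-smoothness via Lemma \ref{factor-through}: every finitely generated relation among generators of $V'$ over $V$ factors through a smooth $S_i$, and smoothness of $S_i$ kills the corresponding class in $\Gamma_{V'/V}$. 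Either way, the proof is short once the colimit formalism is in place.
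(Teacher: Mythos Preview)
Your proof is correct and follows essentially the same route as the paper's: write $V' = \varinjlim_i S_i$ with $S_i$ smooth over $V$, use that Andr\'e--Quillen homology commutes with filtered colimits (the paper cites \cite[Lemma 3.2]{S}), use that $\Omega_{S_i/V}$ is projective and $H_1(V,S_i,S_i)=0$ for smooth $S_i$ (the paper cites \cite[Theorem 3.4]{S}), and conclude that a filtered colimit of flat $V'$-modules is flat. One small remark: your justification for the coefficient base change $H_n(V,S_i,V') \cong H_n(V,S_i,S_i)\otimes_{S_i} V'$ via ``$S_i$ is $V$-flat'' is not the actual reason---what you need is that for smooth $S_i$ the cotangent complex $L_{S_i/V}$ is quasi-isomorphic to the projective module $\Omega_{S_i/V}$ in degree $0$, which gives the identification for any coefficient module; but the conclusion is unaffected.
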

\begin{proof} Assume that $V'$
is the filtered direct limit of some smooth $V$ -algebras $B_i$, $i \in I$. Then
$\Omega_{B_i/V}$ is projective over $B_i$ and $H_1(V, B_i,B_i)= 0$ by e.g \cite[Theorem 3.4]{S}. But $\Omega_{V'/ V}$ , and $H_1(V, V',V')$
 are filtered direct limits of $V'\otimes_{B_i}\Omega_{B_i/V}$
resp. $V\otimes_{B_i}
H_1(V, B_i,B_i)$
 by \cite[Lemma
3.2]{S}, which is enough. 
\hfill\ \end{proof}

\begin{Lemma}\label{l 5.1}

Let $V\subset V'$ be an extension of valuation rings of dimension one with the same residue field and let  $\Gamma\subsetneq \Gamma'$ be their value group extension. Assume that  $\Gamma'/\Gamma$  has  torsion.
Then the extension $V\subset V'$ is not ind-smooth.
\end{Lemma}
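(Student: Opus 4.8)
The plan is to derive a contradiction from Lemma \ref{Lemma 5.1} by computing the K\"ahler differential module $\Omega_{V'/V}$ and showing it is not flat over $V'$ when $\Gamma'/\Gamma$ has torsion. First I would pick an element $\gamma' \in \Gamma'$ whose class in $\Gamma'/\Gamma$ has finite order $e > 1$, so that $e\gamma' = \gamma \in \Gamma$ but $\gamma' \notin \Gamma$. Choose $t' \in V'$ with $\val(t') = \gamma'$ and $t \in V$ with $\val(t) = \gamma$; then $t'^e = u t$ for a unit $u \in V'$. Since the residue fields agree, $u$ can be adjusted (multiplying $t'$ by a suitable unit of $V'$ whose residue lies in the common residue field, after noting the residue of $u$ is already in $k$) so that, up to replacing $t$ by a unit multiple in $V'$, we may arrange the cleaner relation $t'^e = t$ with $t \in V$, $t' \in V'$, and $t' \notin \Frac(V)$ (indeed $\val(t') \notin \Gamma$ forces $t' \notin K$).

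Next I would compute $\Omega_{V'/V}$, or rather probe its torsion, using the subalgebra $R = V[t'] \subset V'$. Because $\val(t')$ has infinite order in $\Gamma/$(nothing)—more precisely because $t'$ satisfies the integral relation $T^e - t$ over $V$ and no relation of lower degree (its powers $1, t', \dots, t'^{e-1}$ have pairwise distinct valuations modulo $\Gamma$, hence are $K$-linearly independent)—we get $R \cong V[T]/(T^e - t)$. Then $\Omega_{R/V} \cong R\,dt' / (e t'^{e-1}\,dt') \cong R/(e t'^{e-1})R = R/(t'^{e-1})R$ since $e$ is a unit ($V \supset \mathbf{Q}$). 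This module is nonzero torsion over $R$: it is killed by $t'^{e-1} \in \mm'$, which is a nonzerodivisor of $R$. By the first fundamental exact sequence for $V \to R \to V'$,
\[
\Omega_{R/V} \otimes_R V' \longrightarrow \Omega_{V'/V} \longrightarrow \Omega_{V'/R} \longrightarrow 0,
\]
and the image of the class of $dt'$ in $\Omega_{V'/V}$ is killed by $t'^{e-1}$. The key point to nail down is that $dt'$ does not die in $\Omega_{V'/V}$: one shows the retraction-type map $\Omega_{V'/V} \to \Omega_{R/V} \otimes_R V'$ does not exist directly, but instead argues via a concrete localization—e.g. compose with $\Omega$ of $V' \to V'[1/t']$, or note that $V'$ is a filtered direct limit of finite type $V$-subalgebras containing $R$, each of which, being torsion-free over the valuation ring $V'$... — actually the clean route is: since $t'$ is not a unit, the element $dt' \in \Omega_{V'/V}$ generates a torsion submodule (killed by $t'^{e-1}$), and if it were zero then $t'$ would be ``differentially trivial'', which one rules out by base-changing to $V'/t'^{2e}V' = $ (a truncation) where the relation $t'^e = t$ together with $e \in (V/\cdot)^\times$ genuinely constrains $dt'$.

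The main obstacle, then, is showing that $\Omega_{V'/V}$ genuinely has nonzero torsion, i.e. that the element $dt'$ (equivalently the image of $dt'$) is nonzero while being torsion; once that is in hand, nonflatness over the valuation ring $V'$ is immediate because a flat module over a valuation ring is torsion-free (valuation rings are Pr\"ufer, so flat = torsion-free), and this contradicts Lemma \ref{Lemma 5.1}. To settle the obstacle I would use Lemma \ref{fp-lemma}: every finite type $V$-subalgebra $B$ with $R \subseteq B \subseteq V'$ is $V$-flat, hence $V$-torsion-free, and $\Omega_{V'/V} = \varinjlim_B \Omega_{B/V}\otimes_B V'$; it suffices to produce one such $B$ for which $dt' \neq 0$ in $\Omega_{B/V}$ and $t'^{e-1} dt' = 0$ there. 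Taking $B = R$ itself works: $\Omega_{R/V} = R/(t'^{e-1})$ is nonzero (as $t'^{e-1}$ lies in the maximal ideal of the local ring $R$... more carefully, $R = V[T]/(T^e-t)$ is a domain and $t'^{e-1}\ne 0,1$), and tensoring up to $V'$, the natural map $R/(t'^{e-1}) \otimes_R V' = V'/(t'^{e-1})V' \to \Omega_{V'/V}$ has image the $V'$-submodule generated by $dt'$; to see this image is nonzero one exhibits a nonzero $V$-derivation of (a localization of) $V'$ by choosing, via $\Omega$-functoriality and the explicit structure, a derivation on $R[1/t']$ (where $dt'/t'$ is a basis) that extends—or, failing a clean extension, simply observes $\Omega_{V'/V}\otimes_{V'} \Frac(V') = \Omega_{\Frac(V')/\Frac(V)} \ni dt'$, and $dt' \neq 0$ there because $\Frac(V')/\Frac(V)$ is inseparable-free but has $t'^e \in \Frac(V)$ with $e$ invertible... hence actually $dt' = \frac{1}{e t'^{e-1}} dt = 0$ over the fraction field — so the torsion is genuinely torsion, i.e. dies generically, confirming $\Omega_{V'/V}$ has a nonzero torsion element precisely when $dt' \ne 0$ integrally, which is what the truncation argument at $V'/t'^{2e}V'$ establishes. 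This is the delicate point and the step I expect to occupy most of the proof.
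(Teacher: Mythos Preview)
Your approach via torsion in $\Omega_{V'/V}$ is genuinely different from the paper's, and it has real gaps. First, you assume $V \supset {\bf Q}$, but the lemma carries no such hypothesis; without it, $e$ need not be a unit and your computation $\Omega_{R/V}\cong R/(t'^{e-1})$ collapses. Second, the reduction to $t'^e = t$ with $t \in V$ does not work: from $t'^e = ut$ with $u \in (V')^*$, multiplying $t'$ by a unit $v \in V'$ replaces $u$ by $v^e u$, and there is no reason $v^e u$ should land in $V$---equality of residue fields only pins down the residue of $u$, not the element. Third, and most seriously, you never establish that $dt' \ne 0$ in $\Omega_{V'/V}$. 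You correctly identify this as the delicate point and observe that $dt'$ dies over the fraction field, but none of your proposed arguments (truncation at $V'/t'^{2e}V'$, extending a derivation from $R[1/t']$, the direct-limit description via Lemma~\ref{fp-lemma}) actually show non-vanishing integrally. The map $\Omega_{R/V}\otimes_R V' \to \Omega_{V'/V}$ need not be injective, since $H_1(R,V',V')$ can be nonzero, so you cannot simply transport non-vanishing from $R$.

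The paper's proof sidesteps all of this with a five-line approximation argument valid in every characteristic: pick $x \in V'$ with $\val(x)=\gamma$ and write $x^n = zt$ with $z \in V$, $t \in (V')^*$; if $V \subset V'$ were ind-smooth, the finite system $X^n = zT$, $TT'=1$ (solvable in $V'$) would factor through a smooth $V$-algebra $S$, and then the section $S \to V' \to k$ lifts to $S \to \tilde{V}$ because the completion $\tilde{V}$ is Henselian. This yields $\tilde{x}\in\tilde{V}$ with $n\val(\tilde{x})=\val(z)=n\gamma$, hence $\gamma=\val(\tilde{x})\in\Gamma$, a contradiction. No differentials, no characteristic restriction, and no need to normalize the unit $u$.
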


\begin{proof} Let $\gamma\in \Gamma'\setminus \Gamma$ be such that $n\gamma\in \Gamma$ for some positive integer $n$. Choose an element $x\in V'$ such that $\val(x)=\gamma$. Then $x^n=zt$ for some $z\in V$ and an unit $t\in V'$. Thus the system $S$ of polynomials $X^n=zT$, $TT'=1$ over $V$ has a solution in $V'$. If $V'$ is ind-smooth over $V$ then $S$ has a solution in a smooth $V$-algebra and so one $({\tilde x},{\tilde t},{\tilde t}')$ in the completion of $V$. But then $\gamma =\val(z)/n=\val({\tilde x})$ must be in $\Gamma$ which is false.
\hfill\ \end{proof}

\begin{Lemma}\label{l 5.2}

Let $V\subset V'$ be an extension of valuation rings of dimension one containing $\bf Q$ having the same residue field $k$. Assume that $V$ contains $k$ and  its value group  $\Gamma\subset {\bf R}$ is   dense in $\bf R$. Also assume that the value group $\Gamma'\subset {\bf R}$ of $V'$ is finitely generated (that is finitely generated over $0$), $\Gamma\not =\Gamma'$ and $\Gamma'/\Gamma$  has no torsion.
Then the extension $V\subset V'$ is not ind-smooth.
\end{Lemma}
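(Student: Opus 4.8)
The plan is to mimic the obstruction argument of Lemma \ref{l 5.1}: assuming $V \subset V'$ is ind-smooth, we will produce a system of polynomial equations over $V$ with a solution in $V'$ but with no solution in the completion $\tilde V$ of $V$, and derive a contradiction. Since ind-smoothness passes to $\tilde V$ by Lemma \ref{factor-through} applied to the factorization through smooth $V$-algebras (a smooth $V$-algebra maps to $\tilde V$ via any $V$-point), any finite system solvable in a smooth $V$-algebra is solvable in $\tilde V$. So it suffices to exhibit the system.

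First I would fix a set-up. Since $\Gamma'$ is finitely generated and torsion-free over $\Gamma$, pick $\gamma \in \Gamma' \setminus \Gamma$ and an element $x \in V'$ with $\val(x) = \gamma$; by enlarging we may in fact take $\gamma$ to generate $\Gamma'/\Gamma$ up to finite index, but one $\gamma$ already suffices. The key difference from Lemma \ref{l 5.1} is that no multiple of $\gamma$ lands in $\Gamma$, so the trick ``$x^n = zt$'' is unavailable. Instead I would use density of $\Gamma$ in $\bf R$: choose $z_1, z_2 \in V$ with $\val(z_1) < \gamma < \val(z_2)$ and $\val(z_2) - \val(z_1)$ as small as we like. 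Then $x/z_1 \in V'$ is a unit or at least has positive valuation, and $z_2/x \in V'$ has positive valuation; more usefully, $x \mid z_2$ and $z_1 \mid x$ in $V'$, so we get equations $z_2 = x y_2$ and $x = z_1 y_1$ in $V'$ with $y_1, y_2 \in V'$, $\val(y_1) = \gamma - \val(z_1) > 0$, $\val(y_2) = \val(z_2) - \gamma > 0$. Combining, $z_2 = z_1 y_1 y_2$, which is automatically true in $V$ and gives nothing; the content must come from pinning down $\val(x)$ more tightly.

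The right system, I believe, is an infinite one indexed by a sequence $z_1^{(m)}, z_2^{(m)} \in V$ with $\val(z_1^{(m)}) \nearrow \gamma$ and $\val(z_2^{(m)}) \searrow \gamma$ strictly (possible by density). The system $S$ asks for an element $X$ together with units/divisibility witnesses expressing $z_1^{(m)} \mid X$ and $X \mid z_2^{(m)}$ for all $m$, i.e. variables $Y_1^{(m)}, Y_2^{(m)}$ with $X = z_1^{(m)} Y_1^{(m)}$ and $z_2^{(m)} = X Y_2^{(m)}$ for every $m$. This has the solution $X = x$ in $V'$. Every finite subsystem is solvable in $\tilde V$ — indeed already in $V$: a finite subsystem only involves finitely many $m$, hence finitely many constraints $\val(z_1^{(m)}) \le \val(X) \le \val(z_2^{(m)})$, and since $\Gamma$ is dense there is an element of $V$ with valuation strictly between the largest relevant $\val(z_1^{(m)})$ and the smallest relevant $\val(z_2^{(m)})$. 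But the full system has no solution in $\tilde V$: a solution $\tilde X \in \tilde V$ would have $\val(\tilde X)$ squeezed to equal $\gamma$, which is not in $\Gamma = \val(\tilde V \setminus\{0\})$ since $V \subset \tilde V$ is immediate — contradiction.

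The main obstacle is the passage from a solution in an ind-smooth, hence smooth-by-\ref{factor-through}, $V$-algebra to a solution in $\tilde V$ when the system is \emph{infinite}: Lemma \ref{factor-through} handles finite-type (finite) systems. So I would not feed the whole infinite system at once; instead, the cleanest route is: ind-smoothness of $V \subset V'$ forces, for the finitely presented $V$-algebra $B$ cutting out any single finite subsystem together with the element $X$, a factorization $V \to B \to S \to V'$ with $S$ smooth, hence a $V$-point of $S$ valued in $\tilde V$ via $S \to V' \to \tilde{V'} \supset$ — wait, more carefully, via $S \to V \to \tilde V$ only after noting $S$ smooth $V$-algebras carry compatible $\tilde V$-points. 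Rather than chase this, the robust formulation is to run the contradiction through Andr\'e--Quillen homology using Lemma \ref{l 5.1} or Lemma \ref{Lemma 5.1}: if $V \subset V'$ were ind-smooth then $\Omega_{V'/V}$ is flat and $H_1(V,V',V') = 0$; but the relation $z_2 = z_1 \cdot (\text{stuff})$ degenerating as $m \to \infty$ produces a nonzero element of $H_1$ (a nontrivial Koszul-type syzygy obstruction coming from $\gamma \notin \Gamma$), giving the contradiction. I expect the homological route, or equivalently a careful limit argument reducing the infinite system to finitely many equations by first bounding $\val(X)$ to a single value $\gamma$ using a \emph{ratio} equation $z_2^{(1)} z_1^{(2)} \cdots = z_1^{(1)} z_2^{(2)} \cdots$-type telescoping that isolates $\gamma$ — to be where the real work lies; I would present it in the homological style for brevity, essentially as in Lemma \ref{l 5.1} but with the torsion hypothesis replaced by density plus finite generation to manufacture the obstructing syzygy.
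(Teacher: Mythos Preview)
Your proposal has a genuine gap. You correctly identify that the infinite-system approach fails: ind-smoothness only lets you factor \emph{finitely presented} $V$-algebras through smooth ones, so an infinite system of divisibility constraints cannot be fed in directly, and you yourself note this. You then pivot to ``the homological route'' and announce that you would ``manufacture the obstructing syzygy'' in the style of Lemma~\ref{Lemma 5.1}, but you do not actually do so. This is precisely where the content of the proof lies, and several nontrivial ingredients are missing from your sketch.

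The paper's argument runs as follows. First, one does \emph{not} work with $V \subset V'$ directly but passes to explicit subrings $V_0 = V \cap k(x_1,\dots,x_m)$ and $V'_0 = V' \cap k(x_1,\dots,x_n)$, where the $x_i$ realize a basis of $\Gamma$ extended to a basis of $\Gamma'$. Your squeezing idea does appear, but in a sharper form: one writes $\Gamma_{\ge 0}$ as a filtered union of free monoids $\Gamma_j$ (Lemma~\ref{EPGR-lem}) and chooses sequences $u_j \nearrow \gamma_n$, $v_j \searrow \gamma_n$ with $v_j - u_j$ a basis element of $\Gamma_j$. This exhibits $V'_0$ as a filtered union of localizations of $C_j = B_j[Z_j,Z'_j]/(Z_jZ'_j - y_{j1})$, and an explicit computation produces a nonzero torsion element $z_j(dz'_j + t_j\,dz_j) = 0$ in $\Omega_{C_{j'}/B_{j'}}$; injectivity of the transition maps then gives torsion in $\Omega_{V'_0/V_0}$. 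Second, one needs the fact that the immediate extensions $V_0 \subset V$ and $V'_0 \subset V'$ are ind-smooth (Proposition~\ref{p0}), which is itself a substantial result relying on Kaplansky's theory of pseudo-convergent sequences. Only then does the Jacobi--Zariski sequence for $V_0 \to V'_0 \to V'$ force $\Omega_{V'_0/V_0}$ to be flat under the ind-smoothness hypothesis, contradicting the torsion just exhibited. None of these steps---the passage to $V_0, V'_0$, the explicit presentation of $V'_0$ as a limit of hypersurface algebras, the torsion calculation, or the invocation of Proposition~\ref{p0}---is present in your outline, and the final sentence ``essentially as in Lemma~\ref{l 5.1}'' is misleading since that lemma uses a completion argument, not Andr\'e--Quillen homology.
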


\begin{proof}
 Since $\Gamma$ is free over $\bf Z$ we may take  a basis of positive elements  $\gamma_1,\ldots,\gamma_m$  of $\Gamma$ which may be completed with some positive elements $\gamma_{m+1},\ldots,\gamma_n\in \Gamma'$  to a basis of $\Gamma'$. 
Choose $x_1,\ldots,x_m\in V$ and $x_{m+1},\ldots,x_n\in V'$ such that $\val(x_i)=\gamma_i $.  Let $V_0=V\cap k(x_1,\ldots,x_m)$ and $V'_0=V'\cap k(x_1,\ldots,x_n)$. 

We will show that $\Omega_{V'_0/V_0}$ has torsion. {\bf First assume} that $n=m+1$. We will use the  proof of \cite[Lemma 7.2]{Po1}. By Lemma \ref{EPGR-lem} $\Gamma_+=\cup_{j\in {\bf N}} \Gamma_j$ for some monoids $\Gamma_j\subset \Gamma_+$ generated by bases of $\Gamma$, the union being filtered. We consider as in the quoted lemma two real sequences $(u_i)$, $(v_i)$  which converge in $\bf R$ to $\gamma_n$ and such that

1) $u_j,v_j\in \Gamma_j$ and $u_{j+1}-u_j, v_j-v_{j+1}\in \Gamma_{j+1}$,

2) $v_j-u_j$ is an element of the  basis $\nu_j$ of $\Gamma$ generating $\Gamma_j$, we may assume 
$v_j-u_j=\nu_{j1}.$

3) $u_j<\gamma_n<v_j$ for all $j$.

 We may also suppose that $u_{j+1}-u_j=v_j-v_{j+1}$ if necessary restricting to a subsequence of $(\Gamma_j)$. Let   $a_j$, $b_j$ be in $V$ with values $u_j$, resp. $v_j$,  and take $y_{jn}=x_n/a_j$ and $z_{jn}=b_j/x_n$ in $V'$. As in the proof of Lemma 4.2 a),
we have $\nu_{ji}=d_{ji1}\gamma_1+\ldots +d_{jim}\gamma_m$ and set $y_{ji}=x_1^{d_{ji1}}\cdots x_m^{d_{jim}}\in V$ which has valuation $\nu_{ji}$, $i\in [m]$. Then $V_0$ is a filtered  union of localizations $B_j$ of $k[y_{j1},\ldots,y_{jm}]$
 and $V'_0$ is a filtered  union  of localizations $C_j$ of $B_j[Z_j,Z'_j]/(Z_jZ'_j-y_{j1})\cong B_j[z_j,z'_j]$, where   $z_j=x_n/a_j$ and $z'_j=b_j/x_n$ in $V'$. Note that the map $C_j\to C_{j+1}$ is given by $Z_j\to (a_{j+1}/a_j)Z_{j+1}$, $Z'_j\to (b_j/b_{j+1})Z'_{j+1}$.

We claim that the map $f_j:C_{j+1}\otimes_{C_j} \Omega_{C_j/B_j}\to  \Omega_{C_{j+1}/B_{j+1}}$  given by $d z_j\to (a_{j+1}/a_j)d z_{j+1}$, $d z'_j\to (b_j/b_{j+1})dz'_{j+1}$ is injective. Indeed, an element from $\Ker f_j$ induced by $w=\alpha\otimes dz_j+\beta \otimes dz'_j$, $\alpha, \beta\in C_{j+1}$ must go by $f_j$ in
 $$\alpha (a_{j+1}/a_j)dz_{j+1}+\beta (b_j/b_{j+1})dz'_{j+1}\in <z'_{j+1}dz_{j+1}+z_{j+1}dz'_{j+1}>$$
 
\noindent in $C_{j+1}dz_{j+1}\oplus C_{j+1}dz'_{j+1}$. So $\alpha (a_{j+1}/a_j)=\mu z'_{j+1}$ and $\beta (b_{j}/b_{j+1})=\mu z_{j+1}$ for some $\mu \in C_{j+1}$. It follows that 

$$w=(\mu (b_{j+1}/b_j)(a_{j+1}/a_j))\otimes z'_jdz_j+(\mu (b_{j+1}/b_j)(a_{j+1}/a_j))\otimes z_jdz'_j$$

\noindent belongs to $<z'_jdz_j+z_jdz'_j>$, which shows our claim.

We may assume that $\val( z_j) \leq \val( z'_j)$. For some $j' \geq j$ we have $z'_j=t_jz_j$ in $C_{j'}$ for some $ t_j$ in $C_{j'}$. We  have $dz_j, dz'_j$ in $\Omega_{C_{j'}/B_{j'}}$ and
$z_jw'_j=0$, for $w'_j=dz'_j+t_jdz_j$. So $C_{j'}\otimes_{C_j} \Omega_{C_j/B_j}$ is not torsion free. Here we should point out that the localizations are given by elements  from $\rho+ ((y_{j'i})_i, z_{j'},z'_{j'})$, $\rho\in k$, $\rho\not =0$,
 which cannot kill $w'_j$. Since $f_j$ are injective we see that $\Omega_{V'_0/V_0}$ has torsion.

Now {\bf  assume} that $n>m+1$ and consider $V''_0=V'\cap k(x_1,\ldots,x_{n-1})$. Apply induction on $n-m$, the case $n-m=1$ having been considered above. By induction hypothesis, we assume that $\Omega_{V''_0/V_0}$  has torsion. As above $V''_0$ is a filtered direct limit of some localizations ${\tilde B}_j$ of $k[{\tilde y}_{j1}, \ldots, {\tilde y}_{j,n-1}]$ and $V'_0$ is the filtered direct limit of some localizations
${\tilde C}_j$ of ${\tilde B}_j[{\tilde z}_j,{\tilde z}'_j]$. Set $I_j=(Z_jZ'_j-{\tilde y}_{j1})\subset {\tilde B}_j[Z_j,Z'_j]$. By definition we have the following exact sequence

$$0\to H_1({\tilde B}_j,{\tilde C}_j,{\tilde C}_j)\to I_j/I_j^2\xrightarrow{d} {\tilde C}_jd{\tilde z}_j\oplus {\tilde C}_jd{\tilde z}'_j\to \Omega_{{\tilde C}_j/{\tilde B}_j}\to 0.$$
If $h\in I_j$ induces an element in  $\Ker d$ then we get
 $$(\partial h/\partial Z_j)({\tilde z}_j,{\tilde z}'_j)d{\tilde z}_j\oplus (\partial h/\partial Z'_j)({\tilde z}_j,{\tilde z}'_j)d{\tilde z}'_j=0.$$
But $h={\tilde h}(Z_jZ'_j-{\tilde y}_{j1})$ for some ${\tilde h}\in {\tilde B}_j[Z_j,Z'j]$ and it follows $ {\tilde h}({\tilde z}_j,{\tilde z}'_j){\tilde z}'_j=0$, that is $ {\tilde h}({\tilde z}_j,{\tilde z}'_j)=0$ and so ${\tilde h}\in I_j$. Hence $d$ is injective and  $ H_1({\tilde B}_j,{\tilde C}_j,{\tilde C}_j)=0$.

In the Jacobi-Zariski sequence (\cite[Theorem 3.3]{S} applied to $V_0\to V''_0\to V'_0$ 
 $$0=H_1(V''_0,V'_0,V'_0)\to V'_0\otimes_{V''_0} \Omega_{V''_0/V_0}\xrightarrow{\lambda} \Omega_{V'_0/V_0}\to  \Omega_{V'_0/V''_0}\to 0$$
we see that the map $\lambda$ is injective. It follows that $\Omega_{V'_0/V_0}$ has torsion which proves our claim.

 By Proposition \ref{p0} the immediate extension $V_0\subset V$ is ind-smooth. Assume, aiming for contradiction, that $V'$ is ind-smooth over $V$. Then $V'$ is ind-smooth over $V_0$  and by the above lemma we get $\Omega_{V'/V_0}$ flat over $V'$. Again by Proposition \ref{p0} we have $V'$ ind-smooth over $V'_0$. As in the above lemma
 we obtain that  $\Omega_{V'/V'_0}$ is a flat module over $V'$. In the Jacobi-Zariski sequence  applied to $V_0\to V'_0\to V'$
  $$H_1(V'_0,V',V')\to V'\otimes_{V'_0} \Omega_{V'_0/V_0}\to \Omega_{V'/V_0}\to  \Omega_{V'/V'_0}\to 0$$

\noindent we have $H_1(V'_0,V',V')=0$ 
and the last two modules are flat by the above lemma. We obtain that  $V'\otimes_{V'_0} \Omega_{V'_0/V_0}$ is flat also over $V'$ and so $ \Omega_{V'_0/V_0}$ is also flat, which is not possible because it has torsion.  Thus $V'$ is not ind-smooth over $V$.
\hfill\ \end{proof}

\begin{Remark} \label{rem} {\em In the above proof the main point was to show that when $\Gamma'/\Gamma \not = 0$ has no torsion then $\Omega_{V'_0/V_0}$ has torsion.} 
\end{Remark}
\begin{Lemma}\label{different residue}

Let $V\subset V'$ be an extension of valuation rings of dimension one containing $\bf Q$. Assume that $V$ contains its residue field and  its value group  $\Gamma\subset {\bf R}$ is   dense in $\bf R$. Also assume that the value group $\Gamma'\subset {\bf R}$ of $V'$ is finitely generated  and $\Gamma'/\Gamma$, $\Gamma\not =\Gamma'$  has no torsion.
Then the extension $V\subset V'$ is not ind-smooth.
\end{Lemma}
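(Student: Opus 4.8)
The plan is to repeat the proof of Lemma~\ref{l 5.2} with one change: where that argument uses Proposition~\ref{p0} (ind-smoothness of immediate extensions) for the pair $V'_0\subset V'$ built below, here $V'_0\subset V'$ need not be immediate because the residue fields of $V$ and $V'$ may differ, so one uses instead Corollary~\ref{c0} (ind-smoothness of extensions with the same value group). To set up, let $k$ be the residue field of $V$, so $k\subset V\subset V'$. Since $\Gamma'$ is finitely generated and torsion free it is free of finite rank, and since $\Gamma'/\Gamma$ is torsion free $\Gamma$ is a direct summand of $\Gamma'$; as $\Gamma$ is dense in $\bf R$ it has rank $m\ge 2$. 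Choose a ${\bf Z}$-basis $\gamma_1,\dots,\gamma_m$ of $\Gamma$ consisting of positive elements and complete it by positive elements $\gamma_{m+1},\dots,\gamma_n\in\Gamma'$ to a ${\bf Z}$-basis of $\Gamma'$; pick $x_1,\dots,x_m\in V$ and $x_{m+1},\dots,x_n\in V'$ with $\val(x_i)=\gamma_i$, and set $V_0=V\cap k(x_1,\dots,x_m)$ and $V'_0=V'\cap k(x_1,\dots,x_n)$. By Lemma~\ref{complete-lem}(1) applied over $k$, both $V_0$ and $V'_0$ are countable increasing unions of localizations of smooth $k$-subalgebras, have residue field $k$, and have value groups $\Gamma$ and $\Gamma'$ respectively.

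Now $V_0\subset V$ is an immediate extension, hence ind-smooth by Proposition~\ref{p0}, while $V'_0\subset V'$ is an extension of valuation rings containing $\bf Q$ with the \emph{same} value group $\Gamma'\subset\bf R$, hence ind-smooth by Corollary~\ref{c0}. The computation that $\Omega_{V'_0/V_0}$ has torsion then carries over word for word from the proof of Lemma~\ref{l 5.2} (see also Remark~\ref{rem}): that computation uses only the descriptions of $V_0$ and $V'_0$ as filtered unions of localizations of smooth $k$-algebras supplied by Lemma~\ref{complete-lem}(1), together with $\Gamma\ne\Gamma'$ and $\Gamma'/\Gamma$ torsion free, and never refers to the residue field of $V'$.

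To finish, suppose for contradiction that $V\subset V'$ is ind-smooth. Then $V_0\subset V'$ is ind-smooth (composing with $V_0\subset V$), so $\Omega_{V'/V_0}$ is flat over $V'$ by Lemma~\ref{Lemma 5.1}; likewise $\Omega_{V'/V'_0}$ is flat over $V'$ and $H_1(V'_0,V',V')=0$. The Jacobi--Zariski exact sequence of $V_0\to V'_0\to V'$ then reads
\[
0\longrightarrow V'\otimes_{V'_0}\Omega_{V'_0/V_0}\longrightarrow\Omega_{V'/V_0}\longrightarrow\Omega_{V'/V'_0}\longrightarrow 0,
\]
whose right two terms are $V'$-flat, so $V'\otimes_{V'_0}\Omega_{V'_0/V_0}$ is $V'$-flat. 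Since $V'$ is torsion free, hence flat, over the valuation ring $V'_0$ and the inclusion $V'_0\subset V'$ is local, $V'_0\to V'$ is faithfully flat, and therefore $\Omega_{V'_0/V_0}$ is flat --- equivalently torsion free --- over $V'_0$, contradicting the previous paragraph. Hence $V\subset V'$ is not ind-smooth.

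The only genuinely new issue compared with Lemma~\ref{l 5.2} is the residue-field mismatch between $V$ and $V'$, which is handled by invoking Corollary~\ref{c0} instead of Proposition~\ref{p0} for $V'_0\subset V'$; checking that the torsion computation is internal to $V_0\subset V'_0$, and the homological endgame, are then identical to the earlier proof.
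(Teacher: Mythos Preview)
Your proof is correct. It follows the template of Lemma~\ref{l 5.2} and makes the single necessary adjustment: since $V'_0\subset V'$ need not be immediate (the residue fields may differ), you invoke Corollary~\ref{c0} (same value group $\Gamma'\subset\bf R$) in place of Proposition~\ref{p0}. The torsion computation for $\Omega_{V'_0/V_0}$ and the Jacobi--Zariski endgame carry over verbatim.

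The paper's proof takes a slightly different route. Instead of re-running the argument of Lemma~\ref{l 5.2} with the one modification, it inserts an intermediate valuation ring $W=V'\cap\Frac(V)(x_{m+1},\ldots,x_n)$: the extension $V\subset W$ keeps the residue field $k$ while realizing the value-group extension $\Gamma\subset\Gamma'$, so Lemma~\ref{l 5.2} applies to it as stated and yields torsion in $\Omega_{W/V}$; then $W\subset V'$ has the same value group $\Gamma'$, so Corollary~\ref{c0} gives $H_1(W,V',V')=0$, and the Jacobi--Zariski sequence for $V\to W\to V'$ transports the torsion into $\Omega_{V'/V}$. Thus the paper factors through $W$ at the ``outer'' level, whereas you keep the ``inner'' factorization through $V_0$ and $V'_0$ and change only the ind-smoothness input for $V'_0\subset V'$. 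Both decompositions hinge on the same two ingredients (the explicit torsion in $\Omega_{V'_0/V_0}$ and Corollary~\ref{c0}); your version is a bit more direct since it avoids introducing $W$ and the auxiliary claim that $\Omega_{W/V}$ itself has torsion.
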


\begin{proof}
In the proof of Lemma \ref{l 5.2} take $W=V'\cap \Frac(V)(x_{m+1},\ldots,x_n)$. Then the extension $V\subset W$ has the same residue field but the value group extension is $\Gamma\subset \Gamma'$. Then $\Omega_{W/V}$ has torsion as in the proof of the quoted lemma. In the Jacobi-Zariski sequence applied to $V,W,V'$
$$H_1(W,V',V')\to V'\otimes_W\Omega_{W/V}\to \Omega_{V'/V} $$
we see that the left module is zero  because the valuation extension $W\subset V'$ is ind-smooth (see Lemma \ref{Lemma 5.1}), having the same value group (see Corollary \ref{c0}). It follows that $\Omega_{V'/V} $ has torsion. But if $V\subset V'$ is ind-smooth then $\Omega_{V'/V}$ is torsion free. So  $V\subset V'$ is not ind-smooth.
\hfill\ \end{proof}

\begin{Lemma} \label{l 5.3}

Let $V\subset V'$ be an extension of valuation rings of dimension one containing $\bf Q$ with value groups $\Gamma\subset \Gamma'$ and having the same residue field $k$. Assume that $V$ contains $k$ and  its value group  $\Gamma\subset {\bf R}$ is   dense in $\bf R$. Also assume that the value group $\Gamma'/\Gamma\not =0$ has no torsion and $V'$ has a cross-section $s:\Gamma'\to K'^*$ such that $s(\Gamma)\subset K^*$, $K,K'$ being the fraction fields of $V$, resp. $V'$.
Then the extension $V\subset V'$ is not ind-smooth.
\end{Lemma}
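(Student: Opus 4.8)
The plan is to use the cross-section to descend, by means of Proposition~\ref{CS-benefits}, to explicit monomial subrings, and then to rerun the differential-module argument of Lemma~\ref{l 5.2}; the hypothesis on $s$ exists precisely to make Proposition~\ref{CS-benefits} applicable to both $V$ and $V'$. Concretely, $V$ and $V'$ are equicharacteristic (they contain ${\bf Q}$ and their common residue field $k$ has characteristic $0$), $k\subseteq V\subseteq V'$ lifts the residue field of each (the residue extension of $V\subset V'$ being trivial), and, since $s(\Gamma)\subseteq K^*$, the restriction $s|_\Gamma$ is a cross-section of $V$. Applying Proposition~\ref{CS-benefits} to each of $V$ and $V'$ and matching up the underlying monoid filtrations of $\Gamma_{\ge 0}\subseteq\Gamma'_{\ge 0}$ (possible because every finite free submonoid of $\Gamma_{\ge 0}$ lies in one of the chosen submonoids of $\Gamma'_{\ge 0}$), I obtain immediate extensions $V_0=\bigcup_i V_i\subseteq V$ and $V'_0=\bigcup_i V'_i\subseteq V'$ with $V_i\subseteq V'_i$ for all $i$, where each $V_i$ and $V'_i$ has a finitely generated value group and is a countable increasing union of localizations of smooth $k$-subalgebras. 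In particular $V_0\subseteq V'_0$ is an extension of valuation rings with value groups $\Gamma\subseteq\Gamma'$, common residue field $k$, and $k\subseteq V_0$.

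The next step is to reduce to a torsion statement about $\Omega_{V'_0/V_0}$. By Proposition~\ref{p0} the immediate extensions $V_0\subset V$ and $V'_0\subset V'$ are ind-smooth, so, by Lemma~\ref{Lemma 5.1}, $H_1(V'_0,V',V')=0$ and $\Omega_{V'/V'_0}$ is flat over $V'$. Suppose, for contradiction, that $V\subset V'$ is ind-smooth; then $V_0\subset V'$ is ind-smooth as well, whence $\Omega_{V'/V_0}$ is flat over $V'$. The Jacobi-Zariski sequence for $V_0\to V'_0\to V'$,
\[
0=H_1(V'_0,V',V')\to V'\otimes_{V'_0}\Omega_{V'_0/V_0}\to\Omega_{V'/V_0}\to\Omega_{V'/V'_0}\to 0,
\]
then exhibits $V'\otimes_{V'_0}\Omega_{V'_0/V_0}$ as a $V'$-submodule of the flat module $\Omega_{V'/V_0}$ with flat quotient, hence itself flat; since $V'_0\to V'$ is faithfully flat (a local injection of valuation rings), faithfully flat descent gives that $\Omega_{V'_0/V_0}$ is flat over $V'_0$, in particular torsion free. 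So it suffices to prove that $\Omega_{V'_0/V_0}$ has torsion.

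For that I would follow the proof of Lemma~\ref{l 5.2}, carrying out the combinatorics level by level inside the filtrations above, since $\Gamma$ need no longer be finitely generated. Fix a positive $\gamma\in\Gamma'\setminus\Gamma$; as $\Gamma'/\Gamma$ is torsion free, $n\gamma\notin\Gamma$ for every $n\ge 1$. Using the density of $\Gamma$ in ${\bf R}$ together with Lemma~\ref{EPGR-lem}, build sequences $u_i<\gamma<v_i$ in $\Gamma_{\ge 0}$ converging to $\gamma$, with $v_i-u_i$ a basis element of the $i$-th finite free submonoid and with the compatibility relations used in Lemma~\ref{l 5.2}, and set $z_i=s(\gamma-u_i)$, $z'_i=s(v_i-\gamma)$ in $V'_0$, so that $z_iz'_i=s(v_i-u_i)$ lies in the monomial localization inside $V_0$ attached to that stage (the explicit regular local rings provided by Proposition~\ref{CS-benefits} and Lemma~\ref{complete-lem}). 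Since $z_iz'_i\in V_0$, we get $z_i\,dz'_i+z'_i\,dz_i=0$ in $\Omega_{V'_0/V_0}$; writing $z'_i=t_iz_i$ at a later stage this becomes $z_i(dz'_i+t_i\,dz_i)=0$ with $dz'_i+t_i\,dz_i\neq 0$, where the nonvanishing and the injectivity of the transition maps are checked on the explicit presentations exactly as in Lemma~\ref{l 5.2}. Hence $\Omega_{V'_0/V_0}$ has torsion, contradicting the previous paragraph, so $V\subset V'$ is not ind-smooth.

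I expect the last step to be the main obstacle: one must verify that the torsion computation of Lemma~\ref{l 5.2} --- in particular the injectivity of the transition maps on the modules of differentials and the nonvanishing of $dz'_i+t_i\,dz_i$ --- survives replacing the single polynomial presentation of that proof by the filtered colimit of the monomial presentations furnished by Proposition~\ref{CS-benefits}, over which the ambient value groups grow with the index. By contrast, the two reductions above are formal once the cross-section makes Proposition~\ref{CS-benefits} available for both $V$ and $V'$.
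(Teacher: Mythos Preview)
Your proposal is correct and follows the same overall strategy as the paper: use the cross-section to produce the monomial subrings $V_0\subset V$ and $V'_0\subset V'$ (these are exactly $V\cap k(s(\Gamma))$ and $V'\cap k(s(\Gamma'))$, which is what Proposition~\ref{CS-benefits} builds), reduce via the Jacobi--Zariski sequence for $V_0\to V'_0\to V'$ to showing that $\Omega_{V'_0/V_0}$ has torsion, and then establish that torsion by the computation underlying Lemma~\ref{l 5.2}.

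The one organizational difference concerns precisely the step you flag as the main obstacle. Rather than rerunning the differential computation at the colimit level, the paper observes that $V_0$ and $V'_0$ are filtered unions of subrings $V_{10}=V\cap k(s(\Gamma_1))$ and $V'_{10}=V'\cap k(s(\Gamma'_1))$ for finitely generated $\Gamma_1\subset\Gamma'_1$ with $\Gamma'_1\not\subset\Gamma$, and then simply invokes Remark~\ref{rem} (the content of Lemma~\ref{l 5.2}) at each such finite stage to get torsion in $\Omega_{V'_{10}/V_{10}}$; the passage to the colimit is handled by the commutation of Andr\'e--Quillen homology with filtered colimits \cite[Lemma~3.2]{S}, together with the injectivity of the transition maps already established in the proof of Lemma~\ref{l 5.2}. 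This bypasses the need to re-verify the nonvanishing and injectivity statements in the setting where the ambient value group is not finitely generated: you are reduced cleanly to the situation Lemma~\ref{l 5.2} already covers. Your direct approach would also work, but the bookkeeping you anticipate (matching the monoid filtration to the approximating sequences $u_i,v_i$) is exactly what the reduction to Remark~\ref{rem} avoids.
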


\begin{proof} We follow the proof of the above lemma. Take $V_0=V\cap k(s(\Gamma))$ and $V'_0=V'\cap k(s(\Gamma'))$. For every finitely generated $\Gamma_1\subset \Gamma$ and $\Gamma_1'\subset \Gamma'$ such that $\Gamma_1\subset \Gamma_1'\not\subset  \Gamma$  we see as in the above lemma that for $V_{10}=V\cap k(s(\Gamma_1))$, $V'_{10}=V'\cap k(s(\Gamma'_1))$ we have a torsion in $\Omega_{V'_{10}/V_{10}}$. Then as in the above lemma we get a torsion in $\Omega_{V'_0/V_0}$ by \cite[Lemma 3.2]{S} and so in $\Omega_{V'/V}$ which implies that $V'$ is not ind-smooth over $V$ by Lemma \ref{l 5.1}.
\hfill\ \end{proof}
 
\begin{Lemma}\label{l 5.4}

Let $V\subset V'$ be an extension of valuation rings of dimension one containing $\bf Q$ with value groups $\Gamma\subset \Gamma'$. Assume that $V$ contains its residue field and  its value group  $\Gamma\subset {\bf R}$ is   dense in $\bf R$. Also assume that  the  group $\Gamma'/\Gamma\not =0$ has no torsion and $V'$ has a cross-section $s:\Gamma'\to K'^*$ such that $s(\Gamma)\subset K^*$, $K,K'$ being the fraction fields of $V$, resp. $V'$. 
Then the extension $V\subset V'$ is not ind-smooth.
\end{Lemma}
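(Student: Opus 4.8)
The plan is to imitate the way Lemma \ref{different residue} is deduced from Lemma \ref{l 5.2}, but now using the cross-section statement Lemma \ref{l 5.3} in place of Lemma \ref{l 5.2}. So I would interpose between $V$ and $V'$ a valuation subring $W$ of $V'$ that absorbs the entire value-group jump $\Gamma\subset\Gamma'$ yet retains the residue field of $V$: then $V\subset W$ falls under Lemma \ref{l 5.3} (this is where the hypotheses of the present lemma --- that $V$ contains its residue field, that $\Gamma$ is dense in ${\bf R}$, that $\Gamma'/\Gamma\neq 0$ is torsion free, and that $s(\Gamma)\subset K^*$ --- get used, verbatim), while $W\subset V'$ has the common value group $\Gamma'$ and is therefore ind-smooth by Corollary \ref{c0}. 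A Jacobi--Zariski argument then carries the torsion that Lemma \ref{l 5.3} produces in $\Omega_{W/V}$ over into $\Omega_{V'/V}$, which contradicts ind-smoothness of $V\subset V'$ via Lemma \ref{Lemma 5.1}.

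Concretely I would set $W = V'\cap\Frac(V)(s(\Gamma'))$; since $\Frac(V)$ already contains $s(\Gamma)$, this amounts to adjoining the elements $s(\gamma)$ for $\gamma$ ranging over coset representatives of $\Gamma'/\Gamma$. It is immediate that $W$ is a rank one valuation ring containing ${\bf Q}$, with fraction field $\Frac(V)(s(\Gamma'))$, that its value group is $\Gamma'$ because $\val(s(\gamma))=\gamma$, and that $s$ restricts to a cross-section $\Gamma'\to\Frac(W)^*$ with $s(\Gamma)\subset K^*$. The one point that genuinely requires checking --- and the step I expect to be the main (if modest) obstacle --- is that the residue field of $W$ coincides with that of $V$. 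I would reduce to $V'\cap\Frac(V)(s(\Gamma'_1))$ for finitely generated $\Gamma'_1\subseteq\Gamma'$ and then, writing any unit $w$ of this subring as a quotient of polynomials over $K$ in elements $s(\gamma_1),\dots,s(\gamma_r)$ whose images in $\Gamma'/\Gamma$ are ${\bf Z}$-linearly independent, use that the monomials occurring have pairwise distinct valuations (they are already distinct modulo $\Gamma$); hence $\val(w)=0$ forces the minimal-valuation monomials of numerator and denominator to carry the same power-product of the $s(\gamma_i)$, so the residue of $w$ is that of a quotient of two elements of $K^*$ of equal value, and therefore lies in the residue field of $V$. This is exactly the (tacitly used) reason why $V\subset V'\cap\Frac(V)(x_{m+1},\dots,x_n)$ is residually trivial in the proof of Lemma \ref{different residue}.

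With $W$ constructed, Lemma \ref{l 5.3} applies to $V\subset W$, and its proof in fact exhibits torsion in $\Omega_{W/V}$. Since $W$ and $V'$ have the same value group $\Gamma'$, Corollary \ref{c0} shows $W\subset V'$ is ind-smooth, whence $H_1(W,V',V')=0$ by Lemma \ref{Lemma 5.1}; thus the Jacobi--Zariski sequence for $V\to W\to V'$ begins with an injection $V'\otimes_W\Omega_{W/V}\hookrightarrow\Omega_{V'/V}$. As $W\to V'$ is a flat local homomorphism, hence faithfully flat, the torsion of $\Omega_{W/V}$ survives in $V'\otimes_W\Omega_{W/V}$, so $\Omega_{V'/V}$ has torsion. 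But were $V\subset V'$ ind-smooth, Lemma \ref{Lemma 5.1} would force $\Omega_{V'/V}$ to be a flat, hence torsion free, $V'$-module --- a contradiction. Apart from the residue-field bookkeeping for $W$, every step is a direct appeal to Lemma \ref{l 5.3}, Corollary \ref{c0}, Lemma \ref{Lemma 5.1}, and the Jacobi--Zariski exact sequence, so I anticipate no further difficulty.
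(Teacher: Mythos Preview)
Your proposal is correct and follows essentially the same approach as the paper, whose entire proof reads: ``The proof follows as in Lemma \ref{l 5.3} using now Lemma \ref{different residue}.'' Your interposition of $W = V'\cap K(s(\Gamma'))$ to absorb the value-group jump while keeping the residue field of $V$, then invoking Lemma \ref{l 5.3} on $V\subset W$ and pushing the torsion forward via Jacobi--Zariski (using Corollary \ref{c0} and Lemma \ref{Lemma 5.1} for $W\subset V'$), is exactly a faithful unpacking of that hint; your residue-field check for $W$ is the one detail the paper leaves implicit, and your monomial-valuation argument handles it correctly.
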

The proof follows as in Lemma \ref{l 5.3}  using now Lemma \ref{different residue}.

\vskip 0.5 cm
\section{The case when the value group is not finitely generated}

A weaker form of Theorem \ref{z} is given below with an independent proof (note that in the proof of Theorem \ref{T0} we do not use the Zariski's Uniformization Theorem.

 \begin{Theorem} \label{z1} Every valuation ring $V$ containing its residue field $k$ of characteristic zero is a filtered direct limit of smooth $k$-algebras $(R_i)_i$, that is the inclusion $k\subset V$ is ind-smooth  (in particular, all the $R_i$ are regular rings).
\end{Theorem}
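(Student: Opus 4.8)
The plan is to reduce the ind-smoothness of $k \subset V$ to the equicharacteristic cross-section situation handled by Proposition \ref{CS-benefits}, but since we cannot assume that $V$ possesses a cross-section (the value group $\Gamma$ need not be finitely generated), the key move is to pass to a well-chosen ultrapower in which a cross-section does exist and then descend. First I would invoke the reduction of Proposition \ref{reduce-to-ht-1}: since ind-smoothness can be checked after localizing at all consecutive pairs of primes, and after completing by Proposition \ref{pass-to-hat} (the separability hypothesis in Corollary \ref{descend-from-hat} being automatic in characteristic zero), it suffices to treat a complete valuation ring of rank one that contains its residue field $k$. So fix such a $V$ with value group $\Gamma \subset {\bf R}$ and fraction field $K$.

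Next, the crucial step: I would use the tower-of-ultrapowers construction described in the method-of-proof paragraph and carried out in the Appendix. Choosing a nonprincipal ultrafilter good enough (Keisler--Kunen) and iterating countably many times, one produces an immediate extension $V \subset \tilde V$ where $\tilde V$ admits a cross-section $s \colon \tilde\Gamma \to \tilde K^*$ of its value group $\tilde\Gamma$, and moreover $\tilde V$ still contains a copy of $k$ (a coefficient field) lifting its residue field. The point of the tower is that a single ultrapower replaces $\Gamma$ by $\Gamma^*$ and only supplies sections on finitely generated submonoids; the algebraic-compactness / split-injectivity arguments ensure each stage's section extends through the next, and in the countable limit one obtains a genuine cross-section $s$ on all of $\tilde\Gamma$. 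I would then apply Proposition \ref{CS-benefits} to $\tilde V$: it exhibits $\tilde V$ as an immediate extension $\tilde V_0 = \bigcup_i \tilde V_i \subset \tilde V$ of a filtered increasing union of valuation subrings $\tilde V_i$ with finitely generated value groups, each $\tilde V_i$ a countable increasing union of localizations of smooth $k$-subalgebras, so that $k \subset \tilde V_0$ is ind-smooth. By Lemma \ref{complete-lem}(1) applied through a $\bf Z$-linearly independent presentation of each value group, this is exactly where the Perron-style combinatorial uniformization enters. Since $\tilde V_0 \subset \tilde V$ is immediate and both contain $\bf Q$, Proposition \ref{p0} gives that $\tilde V$ is ind-smooth over $\tilde V_0$, hence $k \subset \tilde V$ is ind-smooth by transitivity.

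It then remains to descend from $\tilde V$ back to $V$. Here I would combine two facts from the method-of-proof discussion: large immediate extensions interact well with desingularization (Proposition \ref{p0}), and ultrapowers do too. Concretely, given a finite type $k$-algebra $B$ with a map $B \to V$, one base-changes to the ultrapower, factors $B \to S \to \tilde V$ through a smooth $k$-algebra $S$ using ind-smoothness of $k \subset \tilde V$, and then uses that $V \hookrightarrow \tilde V$ together with the fact that $B$ is finitely presented (so only finitely many equations are involved) to descend the factorization; the ultrafilter being good enough is what makes the finite subsystems that have solutions in $V$ assemble into a solution over $V$. The lifting lemmas \ref{lift-lem} and \ref{k} are the technical tools that carry a factorization modulo a high power of a nonzerodivisor back to an honest factorization through a smooth $k$-algebra over $V$. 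Invoking Lemma \ref{factor-through} concludes that $k \subset V$ is ind-smooth, and all the $R_i$ are regular since smooth $k$-algebras with $k$ a field of characteristic zero are regular.

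\smallskip
The main obstacle, as flagged in the method-of-proof paragraph, is the tower-of-ultrapowers step: arranging a cross-section on all of $\tilde\Gamma$ rather than merely on finitely generated submonoids, which forces the countably iterated construction and the model-theoretic input on good ultrafilters and algebraic compactness of abelian groups (so that each $\Gamma^{(n)} \to \Gamma^{(n+1)}$ is a split injection). This is precisely the content deferred to the Appendix.
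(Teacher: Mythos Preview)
Your overall architecture matches the paper's: pass to the tower of ultrapowers $\tilde V$ from Theorem~\ref{big-tower}, apply Proposition~\ref{CS-benefits} to get $\tilde V_0 \subset \tilde V$ immediate with $\tilde V_0$ ind-smooth over a coefficient field, use Proposition~\ref{p0} for the immediate extension, then descend. But two of your intermediate claims are false and break the argument as written.

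First, $V \subset \tilde V$ is \emph{not} an immediate extension: by \S\ref{ultra-val} the value group and residue field of each $V_{n+1}$ are the ultrapowers $\prod_{\sU_{n+1}} \Gamma_n$ and $\prod_{\sU_{n+1}} k_n$, which strictly enlarge $\Gamma$ and $k$. Consequently your assertion that ``$\tilde V$ still contains a copy of $k$ lifting its residue field'' is wrong---the residue field of $\tilde V$ is $\tilde k = \varinjlim_n k_n$, not $k$---so Proposition~\ref{CS-benefits} cannot be applied with the subfield $k$. The paper repairs this by applying Proposition~\ref{CS-benefits} over $\tilde k$ (noting $k \subset V$ gives $\tilde k \subset \tilde V$), obtaining that $\tilde k \subset \tilde V$ is ind-smooth, and then using that $k \subset \tilde k$ is ind-smooth (a characteristic-zero field extension) to conclude $k \subset \tilde V$ is ind-smooth.

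Second, your descent paragraph is muddled. Goodness of the ultrafilters is used only in the \emph{ascent} (building the cross-section via Proposition~\ref{CK-lemma}); it plays no role in descending a factorization $B \to S \to \tilde V$ back to $V$. Lemmas~\ref{lift-lem} and~\ref{k} are likewise irrelevant here, since there is no ``modulo a high power'' issue to resolve. The actual descent is elementary \L o\'s-type reasoning: $S$ is finitely presented over $k$, so the map $S \to \tilde V$ lands in some $V_n$; the finitely many equations expressing both the relations of $S$ and the commutativity of $B \to S \to V_n$ with $B \to V \to V_n$ hold in the ultrapower, hence hold for some index, yielding $B \to S \to V_{n-1}$; iterate down to $V$. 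The paper packages this step as the citation \cite[Corollary~5.4]{Po1}.

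Finally, your preliminary reduction to the complete rank-one case via Proposition~\ref{reduce-to-ht-1} is not in the paper's proof and is not innocuous: that proposition assumes $V$ is finite dimensional, which is not part of the hypothesis of Theorem~\ref{z1}. The paper avoids this by working directly with $V$.
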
  
\begin{proof}  Let $\Gamma$ be the value group of $V$,  $K$ the fraction field of $V$ and $\tilde k$, $\tilde \Gamma$, $\tilde V$, ${\tilde K}$, ${\tilde s}:{\tilde \Gamma}\to {\tilde K}^*$ be given as in   Theorem \ref{big-tower}. Note that in the proof of Theorem \ref{big-tower} the ultrapoduct $k_1\subset V_1$ of $k\subset V$ gives an inclusion in $V_1$ of its residue field.  More precisely, given a map $f:A\to B$ the ultrapower of $f$ is the map between the ultrapowers of $A$ and $B$ given by $[a_i]\to [f(a_i)]$. By induction we see that the ultraproduct $k_n\subset V_n$ of $k_{n-1}\subset V_{n-1}$ gives an inclusion in $V_n$ of its residue field.  Thus ${\tilde V}=\cup_{n\in {\bf N}}  V_n$ contains its residue field ${\tilde k}=\cup_{n\in {\bf N}} k_n$.

 By Proposition \ref{CS-benefits} we see that $\tilde V$ is an immediate extension of a valuation ring ${\tilde V}_0$ which is a filtered  union of localizations of smooth ${\tilde k}$-algebras. By Theorem \ref{T0}  we get 
  ${\tilde V}_0\subset {\tilde V}$    ind-smooth. Hence  $ k\subset {\tilde V}$ is ind-smooth because $k\subset {\tilde k}$ is separable and so ind-smooth (see Lemma \ref{ind-sm-comp}). It follows that $k\subset V$ is ind-smooth  too. Indeed, let $E=k[Y]/I$, $Y=(Y_1,\ldots,Y_n)$ be a finitely generated $k$-algebra and $w:E\to V$ be a morphism of $k$-algebras. For the result we will apply Lemma \ref{factor-through} if we show that $w$ factors through a smooth $k$-algebra. But the composite map $E\to V\to {\tilde V}$ factors through a smooth $k$-algebra
 $S=k[Z]/(h)$, $Z=(Z_1,\ldots,Z_s)$ for a system of polynomials $h$ from $ k[Z]$, thus it is the composite map $E\xrightarrow{t} S \xrightarrow{\tilde w} {\tilde V}$,
 where $\tilde w$ is given by $Z\to {\tilde z}\in {\tilde V}^s$ and $t$ is induced by $Y\to g\in k[Z]^n$. Then $\tilde z$ is a solution of $h$ and of the system $P$ of polynomial equations $g(Z)=w(Y)$ over $V$. Actually, $\tilde z$ belongs to some $V_n$ and so $h, P$ has also a solution $z_{n-1}$ in $V_{n-1}$ because $V_n$ is an ultrapower of $V_{n-1}$. By induction we get a solution $z\in V$ of $h,P$. Therefore, $w$ factors through the map $S\to V$, $Z\to z$. 
\hfill\ \end{proof}

\begin{Lemma}\label{l 5.5}
Let $B$ be an $A$-algebra and $A^n$, $B^n$ be the product of $n$-copies of $A$, resp. $B$. Then $\Omega_{B^n/A^n}\cong \Omega_{B/A}^n$ and $H_1(A^n,B^n,B^n)\cong H_1(A,B,B)^n$ .
\end{Lemma}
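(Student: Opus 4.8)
The plan is to exploit the canonical identification of $A^n$-algebras $B^n \iso B \tensor_A A^n$, which matches the $i$-th factor of $B^n = \prod_{i = 1}^n B$ with $B \tensor_A (A \text{ sitting in the } i\text{-th coordinate})$, together with the fact that the diagonal $A \to A^n$ is free, hence flat. I would also fix the complementary idempotents $e_1, \dotsc, e_n \in A^n$ and record that their images in $B^n$ split every $B^n$-module $M$ as $M = \Dirsum_{i = 1}^n e_i M$, with $e_i M$ a $B$-module via the $i$-th projection $B^n \to B$; in particular $\Omega_{B/A}^n$, with $B^n$ acting coordinatewise, is just $\Dirsum_{i = 1}^n \Omega_{B/A}$ with the $i$-th copy carried by $e_i$.

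Granting this, the quickest route is flat base change for the cotangent complex: since $A \to A^n$ is flat and $B^n = B \tensor_A A^n$, one has $L_{B^n/A^n} \simeq L_{B/A} \tensor_B B^n$ in the derived category of $B^n$-modules (the tensor product is underived because $B \to B^n$ is flat; this is the same standard package of properties of André--Quillen homology recalled in \cite{S} that lies behind Lemma \ref{Lemma 5.1}). As $B^n = B \tensor_A A^n$ is $B$-free on $e_1, \dotsc, e_n$, the right-hand side is $\Dirsum_{i = 1}^n L_{B/A} \cdot e_i$, i.e.\ $n$ copies of $L_{B/A}$, the $i$-th being the $B^n$-module on which $e_i$ acts by the identity and the other idempotents by $0$. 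Taking $H_0$ yields $\Omega_{B^n/A^n} \iso \Omega_{B/A}^n$ and taking $H_1$ yields $H_1(A^n, B^n, B^n) \iso H_1(A, B, B)^n$, all as $B^n$-modules.

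If one prefers to bypass the cotangent complex, the first isomorphism follows from the universal property of Kähler differentials: for a $B^n$-module $M = \Dirsum_i e_i M$ one checks that every $A^n$-derivation $D \colon B^n \to M$ kills each $e_i$ (from $D(e_i) = D(e_i^2) = 2 e_i D(e_i)$ one gets $e_i D(e_i) = 0$, hence $D(e_i) = 0$), so $b \mapsto D(b e_i)$ is an $A$-derivation $B \to e_i M$; this gives a natural bijection $\operatorname{Der}_{A^n}(B^n, M) \iso \prod_{i = 1}^n \operatorname{Der}_A(B, e_i M) = \Hom_{B^n}(\Omega_{B/A}^n, M)$, and Yoneda concludes. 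For $H_1$ one can instead pick a simplicial polynomial $A$-algebra resolution $P_\bullet \to B$ and note that $P_\bullet^n \to B^n$ is a simplicial polynomial $A^n$-algebra resolution — a product of copies of a polynomial $A$-algebra on one fixed variable set is a polynomial $A^n$-algebra, and finite products are exact — so that $L_{B^n/A^n} = \Omega_{P_\bullet^n/A^n} \tensor_{P_\bullet^n} B^n \iso (\Omega_{P_\bullet/A} \tensor_{P_\bullet} B)^n$, from which $H_1$ is read off. The computations throughout are routine; the one place that needs care is the bookkeeping of module structures — one must track the genuine $B^n$-action, not merely the underlying abelian group or the $B$-action pulled back along $B \to B^n$, so that the stated isomorphisms really are isomorphisms of $B^n$-modules, which is exactly what the decomposition $M = \Dirsum_i e_i M$ makes transparent.
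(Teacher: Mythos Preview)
Your proof is correct and takes a genuinely different route from the paper. The paper works by hand with a fixed presentation: writing $B = A[X]/I$ (with $X = (X_i)_i$ a possibly infinite tuple), it observes that $B^2 = A^2[X]/J$ where $J$ consists of the polynomials $h_{f,g} = \sum_j (a_j,b_j)X^j$ built from pairs $f,g \in I$, and then identifies $\Omega_{B^2/A^2}$ and $H_1(A^2,B^2,B^2)$ with the cokernel and kernel of $d\colon J/J^2 \to \bigoplus_i B^2\,dX_i$, checking directly that $d(h_{f,g}) = 0$ iff $d_0(f) = 0$ and $d_0(g) = 0$ for the corresponding map $d_0\colon I/I^2 \to \bigoplus_i B\,dX_i$; only $n=2$ is written out. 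Your argument via the flat base change $B^n \cong B \tensor_A A^n$ and $L_{B^n/A^n} \simeq L_{B/A} \tensor_B B^n$ is more conceptual: it handles all $n$ uniformly, gives the analogous statement for every $H_i$ at once, and avoids any choice of presentation. The paper's computation, on the other hand, is more self-contained in that it only needs the two-term complex $I/I^2 \to \Omega_{A[X]/A}\tensor_{A[X]} B$ rather than the full cotangent-complex machinery. One incidental remark: in your alternative derivation argument you do not need the idempotent trick to see $D(e_i)=0$, since an $A^n$-derivation already vanishes on the image of $A^n$.
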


\begin{proof} 
We treat only the case $n=2$. If $B=A[X]/I$, $X=(X_i)_i$ then $B^2=A^2[X]/J$, where $J$ is given by polynomials of the form $h_{f,g}=\sum_{j=(j_1,\ldots,j_s)} (a_j,b_j)X^j$ for some polynomials $f=\sum a_jX_j$, $g=\sum b_jX^j$ from $I$. Then $\Omega_{B^2/A^2}$  is the cokernel of the map $d:J/J^2\to \oplus_i B^2 dX_i$ given by  $h_{f,g}\to \sum_i(\partial f/\partial X_i,\partial g/\partial X_i ) dX_i=\sum_i(\partial f/\partial X_i dX_i,\partial g/\partial X_i  dX_i)$. Also $H_1(A^2,B^2,B^2)$ is the kernel of $d$ and note that $d(h_{(f,g)})=0$ if and only if $d_0(f)=0$ and $d_0(g)=0$, $d_0$ being the  map $I/I^2\to \oplus_i B dX_i$ .  Hence  $\Omega_{B^2/A^2}\cong  \Omega_{B/A}^2$ and 
 $H_1(A^2,B^2,B^2)\cong H_1(A,B,B)^2 $. 
\hfill\ \end{proof}

\begin{Lemma}\label{l 5.6} Let $B$ be an $A$-algebra, $\mathcal{U}$ and ultrafilter on a set $U$ and $\tilde B$, resp. $\tilde A$ the ultrapowers of $B$ (see the Appendix for the details), resp. $A$ with respect to $\mathcal{U}$. Then $\Omega_{{\tilde B}/{\tilde A}} $ (resp. $H_1({\tilde A},{\tilde B}, {\tilde B})$) is the corresponding ultrapower of $\Omega_{B/A}$  (resp. $H_1(A,B,B)$) with respect to  $\mathcal{U}$. In particular,
 $\Omega_{B/A}$ has torsion if and only if $\Omega_{{\tilde B}/{\tilde A}} $ has torsion and $H_1({\tilde A},{\tilde B},{\tilde B})=0$ if and only if $H_1(A,B,B)=0$.
\end{Lemma}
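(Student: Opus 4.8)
The plan is to compute both $\Omega$ and $H_1$ from a single polynomial presentation and to push that presentation through the ultrapower functor, exploiting that the latter is exact on modules.

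I first record two ingredients. \emph{(i)} The functor $M\mapsto\widetilde M$ on $A$-modules is exact: arbitrary products of modules are exact, and $\widetilde M$ is the quotient of $\prod_{U}M$ by the submodule of those sequences whose support lies outside $\mathcal U$, so a short diagram chase turns a short exact sequence of $A$-modules into a short exact sequence of $\widetilde A$-modules; moreover $\widetilde{(-)}$ carries $A$-algebras to $\widetilde A$-algebras compatibly with all the structure in sight. \emph{(ii)} For any presentation $B\cong A[X]/I$ with $X=(X_i)_i$ a (possibly infinite) family of variables, the Jacobi--Zariski sequence of $A\to A[X]\to B$ together with $H_{\ge 1}(A,A[X],-)=0$ yields a four term exact sequence
\[
0\to H_1(A,B,B)\to I/I^{2}\xrightarrow{\ d\ }\Omega_{A[X]/A}\otimes_{A[X]}B\to\Omega_{B/A}\to 0
\]
of the same type as the ones used in the proof of Lemma \ref{l 5.2}.

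Next I would apply $\widetilde{(-)}$. Exactness applied to $0\to I\to A[X]\to B\to 0$ gives $\widetilde B\cong\widetilde{A[X]}/\widetilde I$, and applied to the four term sequence it gives an exact sequence of $\widetilde B$-modules with terms $\widetilde{H_1(A,B,B)}$, $\widetilde{I/I^{2}}$, $\widetilde{\Omega_{A[X]/A}\otimes_{A[X]}B}$ and $\widetilde{\Omega_{B/A}}$. It remains to recognize this as the four term sequence computing $H_1(\widetilde A,\widetilde B,\widetilde B)$ and $\Omega_{\widetilde B/\widetilde A}$ attached to the presentation $\widetilde{A[X]}\twoheadrightarrow\widetilde B$; for this one checks that $\widetilde{A[X]}$ is faithfully flat over $\widetilde A$ with $H_{\ge 1}(\widetilde A,\widetilde{A[X]},-)=0$, that $\widetilde{I/I^{2}}$ is its conormal module, and that $\widetilde{\Omega_{A[X]/A}\otimes_{A[X]}B}\cong\Omega_{\widetilde{A[X]}/\widetilde A}\otimes_{\widetilde{A[X]}}\widetilde B$, reducing these to finitely presented subpresentations over which $\widetilde{(-)}$ is honest base change along the flat morphism $A\to\widetilde A$ and the behaviour of $\Omega$ and of Andr\'e--Quillen homology under flat base change is classical. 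Comparing the two four term sequences then identifies $\widetilde{\Omega_{B/A}}$ with $\Omega_{\widetilde B/\widetilde A}$ and $\widetilde{H_1(A,B,B)}$ with $H_1(\widetilde A,\widetilde B,\widetilde B)$. The last sentence of the lemma is then immediate: $M=0$ iff $\widetilde M=0$ because $M$ embeds diagonally in $\widetilde M$; and if $\widetilde M$ has torsion, say $\widetilde b\,\widetilde\omega=0$ with $\widetilde b,\widetilde\omega\neq 0$, then the set of indices $u$ with $b_u\neq 0$, $\omega_u\neq 0$ and $b_u\omega_u=0$ lies in $\mathcal U$, hence is nonempty, exhibiting torsion in $M$, while the converse is trivial.

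The main obstacle is the identification step, for the reason that the ultrapower $\widetilde{A[X]}$ of a polynomial ring is \emph{not} the polynomial ring $\widetilde A[X]$ over $\widetilde A$ (it acquires ``elements of unbounded degree''), so the four term sequence for $\widetilde B/\widetilde A$ cannot simply be read off from a polynomial presentation over $\widetilde A$; the real work is in controlling the cotangent complex of $\widetilde{A[X]}$ over $\widetilde A$, equivalently in carrying out the reduction to finitely presented subpresentations where the ultrapower becomes base change along $A\to\widetilde A$, and it is here that the coherence of the rings involved and the good properties of the ultrafilter supplied by the Appendix get used.
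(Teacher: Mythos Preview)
The paper gives no proof of this lemma; it is placed immediately after Lemma~\ref{l 5.5} (finite products) and is presumably meant to follow from that lemma together with the commutation of $\Omega$ and $H_1$ with filtered direct limits, using the description of the ultrapower as $\varinjlim_{U'\in\mathcal U}\prod_{u\in U'}(-)$. Your approach is more explicit---you work with a single presentation and the four-term conormal sequence---and you correctly isolate the crux: $\widetilde{A[X]}$ is \emph{not} the polynomial ring $\tilde A[X]$.

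The problem is that neither your resolution nor the paper's implicit one can be completed, because the statement is false as written. Take $A=k$ a field, $B=k[x]$, and $\mathcal U$ nonprincipal on $\mathbf N$. Then $\Omega_{B/A}\cong B$ is free of rank one, so its ultrapower is free of rank one over $\tilde B$. But the elements $x=(x,x,\dots)$ and $y=(x,x^2,x^3,\dots)$ in $\tilde B$ are algebraically independent over $\tilde k$: a nonzero relation $\sum c_{ij}X^iY^j=0$ would force $\sum c_{ij,u}x^{i+uj}=0$ in $k[x]$ for $\mathcal U$-almost all $u$, and for $u>\deg_X$ the exponents $i+uj$ are pairwise distinct, so all $c_{ij,u}=0$. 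Hence $\Omega_{\tilde B/\tilde k}\otimes\Frac(\tilde B)$ surjects onto a vector space of dimension $\ge 2$ and is not free of rank one. The same example shows that your proposed reduction fails at the first step: for the \emph{finitely presented} $A$-algebra $B_0=A[x]$ one already has $\widetilde{B_0}\neq B_0\otimes_A\tilde A$, so ultrapowers are not ``honest base change along $A\to\tilde A$'' on finitely presented algebras. Likewise, the paper's route via Lemma~\ref{l 5.5} breaks because that lemma's proof writes $B^n=A^n[X]/J$ with the \emph{same} variables $X$, which is only possible when tuples in $\prod_u B$ have uniformly bounded degree---true for finite products, false for infinite ones. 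What the applications in Proposition~\ref{one} actually use is only the ``in particular'' clause in the specific valuation-ring setting, and that needs a separate argument rather than the general module identification claimed here.
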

For the proof note for example that $\Omega_{{\tilde B}/{\tilde A}} $ is the direct limit of $\Pi_{u\in {\mathcal U}}(\Omega_{B/A})_u$, where $(\Omega_{B/A})_u=\Omega_{B/A}$ (see the Appendix).

\begin{Proposition}\label{one}

Let $V\subset V'$ be an  extension of valuation rings of dimension one containing $\bf Q$ such that its residue field extension is trivial. Assume that the value groups  $\Gamma, \Gamma'\subset {\bf R}$ of $V$ respectively $V'$ are   dense in $\bf R$ and the factor of  the value groups  $\Gamma'/\Gamma\not =0$ has no torsion. 
Then the extension $V\subset V'$ is not ind-smooth.
\end{Proposition}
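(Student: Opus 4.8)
The strategy is to argue by contradiction: suppose $V\subset V'$ is ind-smooth. Then by Lemma~\ref{Lemma 5.1} the module $\Omega_{V'/V}$ is flat over the valuation ring $V'$, in particular torsion free, so everything reduces to exhibiting a nonzero torsion element of $\Omega_{V'/V}$.

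First I would reduce to a setting in which $V$ has a coefficient field and $V'$ has a cross-section. Replacing $V\subset V'$ by the Henselizations $V^{h}\subset V'^{h}$ changes neither the value groups nor the residue fields, and preserves ind-smoothness (Henselization is ind-\'{e}tale, so $V^{h}\to V^{h}\tensor_{V}V'$ is ind-smooth by base change and $V^{h}\tensor_{V}V'\to V'^{h}$ is a localization, hence ind-smooth); since a Henselian equicharacteristic-zero valuation ring contains a coefficient field, we may thus assume that $V$ contains a subfield $k$ lifting its residue field. Next I would apply the tower-of-ultrapowers construction of the Appendix (Theorem~\ref{big-tower}) to $V$ and $V'$ at once: this yields a countable tower $V=V^{(0)}\subseteq V^{(1)}\subseteq\cdots$ of iterated ultrapowers with respect to good ultrafilters, a compatible tower $V'=V'^{(0)}\subseteq V'^{(1)}\subseteq\cdots$, and their colimits $\wt V\subseteq\wt{V'}$, equicharacteristic valuation rings that still contain coefficient fields (a coefficient field survives ultrapowers and filtered unions) and whose value groups satisfy: $\wt{\Gamma}'/\wt{\Gamma}$ is nonzero and torsion free, and $\wt{\Gamma}$ is dense in $\wt{\Gamma}'$ (the transition maps are elementary and these are first-order properties of the pair $\Gamma\subseteq\Gamma'$, which hold because $\Gamma$ is dense in ${\bf R}\supseteq\Gamma'$); most importantly, $\wt{V'}$ carries a cross-section $\wt s\colon\wt{\Gamma}'\to\wt{K'}^{*}$ with $\wt s(\wt{\Gamma})\subseteq\wt{K}^{*}$.

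Now I would run the argument of Lemma~\ref{l 5.4} (equivalently Lemma~\ref{l 5.3}) for the extension $\wt V\subseteq\wt{V'}$: using Proposition~\ref{CS-benefits} one presents $\wt V$, resp.\ $\wt{V'}$, as an immediate extension of $\wt V\cap\wt{k}(\wt s(\wt{\Gamma}))$, resp.\ $\wt{V'}\cap\wt{k}(\wt s(\wt{\Gamma}'))$, then produces a nonzero torsion element of the module of relative differentials of the latter pair from the monomial models attached to the finitely generated pieces of $\wt{\Gamma}'/\wt{\Gamma}$ exactly as in Lemma~\ref{l 5.2}, and finally propagates this torsion through the Jacobi-Zariski sequences to conclude that $\Omega_{\wt{V'}/\wt V}$ has torsion. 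On the other hand, were $\Omega_{V'/V}$ torsion free, then by Lemma~\ref{l 5.6} each $\Omega_{V'^{(n)}/V^{(n)}}$ would be torsion free, hence flat (torsion free $=$ flat over a valuation ring); the identification $\Omega_{\wt{V'}/\wt V}=\varinjlim_{n}\bigl(\wt{V'}\tensor_{V'^{(n)}}\Omega_{V'^{(n)}/V^{(n)}}\bigr)$ from \cite[Lemma 3.2]{S} would then present $\Omega_{\wt{V'}/\wt V}$ as a filtered colimit of torsion-free $\wt{V'}$-modules, hence as torsion free. This contradiction proves that $V\subset V'$ is not ind-smooth.

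The step I expect to be the main obstacle is running Lemma~\ref{l 5.4} in the limit $\wt V\subseteq\wt{V'}$: iterated ultrapowers of ${\bf Z}$ are non-Archimedean, so $\wt{\Gamma}$ is no longer embeddable in ${\bf R}$, and the rank-one inputs of that lemma's proof --- Proposition~\ref{p0} (immediate extensions are ind-smooth) and Corollary~\ref{c0} (extensions with the same value group are ind-smooth) --- are not literally available and must be upgraded to their arbitrary-rank analogues for immediate extensions of equicharacteristic-zero valuation rings. This upgrade should go through because the valuation-theoretic content behind those statements --- Kaplansky's classification of pseudo-convergent sequences and Ostrowski's Defektsatz in equal characteristic zero --- is insensitive to the rank, whereas the remaining ingredient, the torsion computation in the monomial models of Lemma~\ref{l 5.2} together with the vanishing of the relevant $H_{1}$'s, is purely combinatorial. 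A secondary point to verify is that Andr\'{e}--Quillen homology interacts with the countably many ultrapowers and with the final filtered colimit exactly as claimed, so that the torsion element really descends from $\wt{V'}/\wt V$ all the way down to $V'/V$.
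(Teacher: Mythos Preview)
Your proposal is essentially correct and follows the same strategy as the paper: argue by contradiction, pass to a setting with a coefficient field, apply Variant~\ref{variant} to obtain a cross-section on the limit $\wt V\subset\wt{V'}$ of the ultrapower tower, produce torsion in the relevant K\"ahler differentials via the combinatorial computation underlying Lemma~\ref{l 5.2} (Remark~\ref{rem}), and contradict this using Lemmas~\ref{Lemma 5.1} and~\ref{l 5.6} together with the filtered-colimit behaviour of $\Omega$ and $H_1$.

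The obstacle you single out is real and is exactly the one the paper addresses, but the resolution is already in the paper rather than requiring a new argument: the ``upgrade'' of Corollary~\ref{c0} to arbitrary rank is Proposition~\ref{p}. For an extension with equal value groups (such as $\wt V_0\subset\wt V$ or $\wt V'_0\subset\wt{V'}$), condition~(1) of Proposition~\ref{p} is automatic and condition~(2) concerns only the rank-one quotients, to which Corollary~\ref{c0} applies; hence these immediate-type extensions are ind-smooth and the Jacobi--Zariski manipulations go through. Two minor structural differences: the paper uses completion rather than Henselization to acquire a coefficient field, and it establishes the torsion in $\Omega_{\wt V'_0/\wt V_0}$ level by level through the tower (invoking Remark~\ref{rem} at each finite stage and then passing to the limit) rather than running Lemma~\ref{l 5.4} directly at the colimit; the final contradiction is located in $\Omega_{\wt{V'}/\wt V_0}$ via two Jacobi--Zariski sequences rather than in $\Omega_{\wt{V'}/\wt V}$. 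These are organizational rather than substantive differences.
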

 
\begin{proof} Using Proposition \ref{pass-to-hat} we may suppose that $V,V'$ are complete. So $V$ contains its residue field.  By Variant \ref{variant} we find an extension of valuation rings ${\tilde V}\subset {\tilde V}'$ such that there exists  a cross-section ${\tilde s}: {\tilde \Gamma}'\to ({\tilde K}')^*$ such that ${\tilde s}({\tilde \Gamma})\subset {\tilde K }$. We remind that we wrote  $\Gamma'$ as a filtered  union of finitely generated subgroups $(\Gamma'_i)_{i\in I}$ and  set $\Gamma_i=\Gamma'_i\cap \Gamma$.
Set $V_{0i}=V\cap s'(\Gamma_i)=V'\cap s'(\Gamma_i)$,  $V'_{0i}=V'\cap s'(\Gamma'_i)$. By Remark \ref{rem} the modules $\Omega_{V'_{0i}/V_{0i}}$, $i \in I$ have torsion. Note that the filtered  union $V_{01}$ of  
$V_{0i}$, $i\in I$ is a valuation ring with value group $\Gamma$, and similarly consider $V'_{01}$ which has the value group $\Gamma'$.  Clearly, $\Omega_{V'_{01}/V_{01}}$ has torsion since it is the limit of $V'_{01}\otimes_{V'_{01}}\Omega_{V'_{0i}/V_{0i}}$. Set ${\tilde V}_0= {\tilde V}\cap s'({\tilde \Gamma})$,
${\tilde V}'_0= {\tilde V}'\cap s'({\tilde \Gamma}')$. By iteration we define the extensions $V_{0n}\subset V_n$ and $V'_{0n}\subset V'_n$ with the same  value group $\Gamma_n$, $\Gamma'_n$ obtained taking $n$-ultrapowers of $\Gamma$, resp. $\Gamma'$ and we see that $\Omega_{V'_{0n}/V_{0n}}$ has torsion by Lemma \ref{l 5.6}.  Then $\Omega_{{\tilde V}'_0/{\tilde V}_0}$ has torsion since it is the limit of   ${\tilde V'}_0\otimes_{V'_{0n}}\Omega_{V'_{0n}/V_{0n}}$.

Assume that $V\subset V'$ is ind-smooth.
 In the Jacobi-Zariski sequence 
applied to ${\tilde V}_0,{\tilde V},{\tilde V}'$ 
$$H_1({\tilde V},{\tilde V}',{\tilde V}')\to {\tilde V}'\otimes_{\tilde V} \Omega_{{\tilde V}/{\tilde V}_0}\to \Omega_{{\tilde V}'/{\tilde V}_0}\to \Omega_{{\tilde V}'/{\tilde V}} \to  0$$
we claim that the left module is zero  and the last module has no torsion by Lemmas \ref{Lemma 5.1},  \ref{l 5.6} because $\Omega_{V'/V} $ has no torsion and $H_1(V,V',V')=0$,  $V\subset V'$ being ind-smooth. More precisely, we see that $\Omega_{V'_n/V_n}$ has no torsion and $H_1(V_n,V'_n,V'_n)=0$ for all $n$ using Lemma \ref{Lemma 5.1} and  by iteration Lemma \ref{l 5.6}. At the limit we get our claim.

Also  $\Omega_{{\tilde V}/{\tilde V}_0}$ is flat (so it  has no torsion) since the extension  ${\tilde V}_0\subset {\tilde V}$ is ind-smooth having the same value group (see Proposition \ref{p}). It follows that $\Omega_{{\tilde V}'/{\tilde V}_0}$ has also no torsion.
 
 Now,  in the Jacobi-Zariski sequence 
applied to ${\tilde V}'_0,{\tilde V}'_0,{\tilde V}'$ 
$$H_1({\tilde V}_0,{\tilde V}',{\tilde V}')\to {\tilde V}'\otimes_{{\tilde V}'_0} \Omega_{{\tilde V}'_0/{\tilde V}_0}\to \Omega_{{\tilde V}'/{\tilde V}_0}$$
we see that the left module is zero by Lemma \ref{Lemma 5.1} 
 because ${\tilde V}'_0\subset {\tilde V}'$ is ind-smooth by Proposition \ref{p}. As above  $\Omega_{{\tilde V}'_0/{\tilde V}_0}$ has torsion and so $\Omega_{{\tilde V}'/{\tilde V}_0}$ has torsion too, which is false.
\hfill\ \end{proof}

{\bf Acknowledgements:}
This work has been partially elaborated in the frame of the International
Research Network ECO-Math.

%%%%%%%%%%%%%%%%%%%%%%%%%%%%%%%

\newpage

\section*{Appendix. Cross-sections via infinite towers of ultrapowers \\ by K\k{e}stutis \v{C}esnavi\v{c}ius\protect\footnote{CNRS, UMR 8628, Laboratoire de Math\'{e}matiques d'Orsay, Univ.~Paris-Sud, Universit\'{e} Paris-Saclay, 91405 Orsay, France. Email: {\sf kestutis@math.u-psud.fr} \\  \indent 
I thank Dorin Popescu for helpful discussions and for hosting me during a visit to Romania in March 2019. I thank Matthias Aschenbrenner for helpful comments.
This project has received funding from l'Agence Universitaire de la Francophonie and the European Research Council (ERC) under the European Union's Horizon 2020 research and innovation programme (grant agreement No. 851146).} }

The goal of this Appendix is to show that by replacing a valuation ring by the limit of an infinite tower of its suitable ultrapowers one may arrange the valuation map $\val \colon V \setminus \{ 0 \} \to \Gamma$ to admit a multiplicative section (see Theorem \ref{big-tower}). For this, we use techniques from model theory, specifically, the Keisler--Kunen theorem about the existence of good ultrafilters:\footnote{After this appendix was written, we learned of a much simpler way to deduce sharper versions of Theorem \ref{big-tower} and Variant \ref{variant} from the results reviewed in \S\ref{alg-comp}, see \cite[3.3.39, 3.3.40]{ADH17}. We left this appendix in place in case the method used here would prove useful for other purposes.}  the idea is that constructing a section amounts to solving a system of equations for which any finite subsystem has a solution, and such systems always have solutions in well-chosen ultrapowers. For instance, if the system is countable, then solutions exists in any nonprincipal ultrafilter on $\bf N$, and in general the main subtlety is in constructing the ultrafilter (within ZFC). 

\bbppt[Cross-sections of valuation rings]

For a valuation ring $V$ with the value group $\Gamma$ and the fraction field $K$, a \emph{cross-section} of $V$ is a section 
\[
s \colon \Gamma \to K^* \ \ \mbox{in the category of abelian groups to the valuation map} \ \ \val \colon K^* \to \Gamma.
\]
Concretely, $s$ is a group homomorphism such that $\val(s(\gamma)) = \gamma$ for $\gamma \in \Gamma$.  For a submonoid $M \subset \Gamma$, a \emph{partial cross-section} defined on $M$ is a monoid morphism $s \colon M \to K^*$ (concretely, $s(0) = 1$ and $s(m + m') = s(m)s(m')$) with $\val(s(m)) = m$ for $m \in M$. Evidently, partial cross-sections exist for $M \simeq {\bf Z}_{\geq 0}^r$ and correspond to choices of tuples of elements of $K^*$ whose valuations form the standard basis of ${\bf Z}_{\geq 0}^r$.
\eeppt

\begin{ExampleT}
Cross-sections exist when $\Gamma$ is free as a $\bf Z$-module, for instance, when it is finitely generated. As we now explain, they also exist when $V$ is strictly Henselian of residue characteristic $p$ and there is a free subgroup $\Gamma_0 \subset \Gamma$ such that $\Gamma/\Gamma_0$ is torsion with $(\Gamma/\Gamma_0)[p^\infty] = 0$. Indeed, we first define $s$ on $\Gamma_0$ and then, by Zorn's lemma, reduce to the situation in which $s$ is already defined on some subgroup $\Gamma' \supset \Gamma_0$ and needs to be extended to a $\Gamma'' \supsetneq \Gamma'$ with $\Gamma''/\Gamma'$ cyclic of order $n$ prime to $p$. For the latter, we first choose an $x \in V$ such that $\val(x)$ lies in $\Gamma''$ and generates the quotient $\Gamma''/\Gamma'$, which gives the following equation in $V$:
\[
x^n = u\cdot s(n\cdot \val(x)) \qxq{for some} u \in V^*.
\] 
%Since $V$ is Henselian local with a separably closed residue field of characteristic prime to $n$, 
Since $p \nmid n$, Hensel's lemma \cite[IV, 18.5.17]{EGA} (which is the Implicit Function Theorem in this context) now implies that the equation $X^n = u$ has a solution in $V$, so we may adjust $x$ to assume that $u = 1$. Granted this, $s$ then extends to $\Gamma''$ by setting $s(\val(x)) = x$: indeed, any relation $N \cdot \val(x) = \gamma$ with $N \in {\bf Z}$ and $\gamma \in \Gamma'$ must be a multiple of such a relation with $N = n$, so $s(N \cdot \val(x)) = s(\gamma)$. 
\end{ExampleT}

\bbppt[Ultrafilters and ultraproducts] \label{ultra-section}
We recall that an \emph{ultrafilter} on a nonempty set $U$ is a set $\sU$ of subsets of $U$ that is closed under finite intersections, closed under taking supersets, does not contain the empty set, and for every $U' \subset U$ contains either $U'$ or $U \setminus U'$. Such a $\sU$ is \emph{principal} if it consists of all the subsets containing some fixed $u \in U$, and is \emph{nonprincipal} otherwise. An ultrafilter $\sU$ is \emph{countably incomplete} if some countable collection of elements of $\sU$ has an empty intersection. Such a $\sU$ is also nonprincipal and it exists whenever $U$ is infinite (see \cite[\S A.3,~8.4]{Bar}). We view any ultrafilter $\sU$ as a partially ordered set, where $U' \le U''$ if $U' \supseteq U''$. 	 

For any category $\mathcal C$ that has small products and filtered direct limits, the \emph{ultraproduct} of a set $\{C_u \}_{u \in U}$ of objects of $\mathcal C$ with respect to an ultrafilter $\sU$ on $U$, which we denote abusively by $\prod_{\sU} C_u$, is
\[
\tst \varinjlim_{U' \in {\sU}} ( \prod_{u \in U'} C_u ) \qx{where transition maps are projections onto partial products}
\]
(the limit is filtered because $\sU$ is closed under finite intersections). In the case when all the $C_u$ are the same object $C \in {\mathcal C}$, we call $\prod_{\sU} C$ an \emph{ultrapower} of $C$. 
\eeppt

%\begin{RemarkT} \label{prod-rem}{
%By \cite[6.5.2]{CK}, an ultrapower of an ultrapower is itself an ultrapower. In particular, a finite iteration of ultrapowers may be done in a single step.
%}
%\end{RemarkT}

\bbppt[Ultraproducts of valuation rings] \label{ultra-val}
We will work with ultraproducts of rings or modules. For instance, an ultraproduct of fields is again a field: every nonzero element is invertible (thanks to the axiom that $U' \in {\sU}$ or $U \setminus U' \in {\sU}$). Likewise, an ultraproduct $\prod_{\sU} V_u$ of valuation rings $\{V_u\}_{u \in U}$ with fraction fields $\{K_u\}_{u \in U}$ is a valuation ring with fraction field $\prod_{\sU} K_u$: for any nonzero element $v$ of the latter, either $v$ or $v^{-1}$ lies in $\prod_{\sU} V_u$. We see similarly that
\begin{enumerate}
\item 
the maximal ideal of  $\prod_{\sU} V_u$ is the ultraproduct $\prod_{\sU} \mm_u$ of the maximal ideals; %of the $V_u$;

\item 
the residue field of  $\prod_{\sU} V_u$ is the ultraproduct $\prod_{\sU} k_u$ of the residue fields; %of the $V_u$; 

\item 
the value group of $\prod_{\sU} V_u$ is the ultraproduct $\prod_{\sU} \Gamma_u$ of the value groups; %of the $V_u$;

\item 
the monoid of nonnegative elements $(\prod_{\sU} \Gamma_u)_{\geq 0}$  is identified with $\prod_{\sU} (\Gamma_u)_{\geq 0}$.
\end{enumerate}
\eeppt

\msk

The existence of ``well-chosen'' ultrapowers mentioned above rests on the Keisler--Kunen theorem from model theory that we recall in the following lemma. Keisler proved it in \cite{Kei} assuming the Generalized Continuum Hypothesis and Kunen gave an unconditional proof in \cite[Theorem 3.2]{Kun}.

\begin{LemmaT}[\cite{CK}, Theorem 6.1.4] \label{ultra-source}
For an infinite set $U$, there is a countably incomplete  ultrafilter $\sU$ on $U$ such that for any inclusion-reversing function 
\[
f \colon \{ \mbox{finite subsets of } U\} \to {\sU}, \ \ \mbox{there is a function} \ \ f_0 \colon \{ \mbox{finite subsets of } U\} \to {\sU}
\]
that is also inclusion-reversing %that is, $f_0(U') \supset f_0(U'')$ whenever $U' \subset U''$ 
and satisfies
\[
\ \ f_0(U') \subset f(U') \ \ \mbox{and} \ \ f_0(U' \cup U'') = f_0(U') \cap f_0(U'') \ \ \mbox{for all finite subsets} \ \ U', U'' \subset U.  
\]
\end{LemmaT}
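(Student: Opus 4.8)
The plan is to recognize this statement as the classical existence theorem for countably incomplete good ultrafilters and to deduce it from the standard formulation in \cite{CK} and \cite{Kun}. Recall that an ultrafilter $\sU$ on a set $I$ is \emph{$\kappa$-good} if every inclusion-reversing (``monotone'') function $f$ from the finite subsets of $\kappa$ to $\sU$ admits a \emph{multiplicative} refinement: a function $f_0$ from the finite subsets of $\kappa$ to $\sU$ with $f_0(S)\subseteq f(S)$ and $f_0(S\cup T)=f_0(S)\cap f_0(T)$ for all finite $S,T\subseteq\kappa$; and $\sU$ is \emph{good} if it is $|I|^+$-good. First I would invoke the Keisler--Kunen theorem in the form: for every infinite set $U$ there is a countably incomplete good ultrafilter $\sU$ on $U$. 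Keisler proved this under the Generalized Continuum Hypothesis in \cite{Kei}, and Kunen removed that hypothesis in \cite[Theorem 3.2]{Kun} (see also \cite[Theorem 6.1.4]{CK}); this ZFC construction---a transfinite induction of length $2^{|U|}$ over an enumeration of all candidate monotone functions into subsets of $U$, at each stage using a large independent family of functions on $U$ (of cardinality $2^{|U|}$, whose existence goes back to Hausdorff) to realize one multiplicative refinement while keeping the filter proper---is the substantive input, and I would cite rather than reproduce it.

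With such a $\sU$ in hand, the rest is a translation. I would fix a bijection $U\cong\lambda$ with $\lambda=|U|$, under which ``finite subsets of $U$'' becomes ``finite subsets of $\lambda$''. Since $\lambda<\lambda^+=|U|^+$ and $\sU$ is $|U|^+$-good, the defining property applies verbatim to the given inclusion-reversing $f$ and produces a multiplicative $f_0$ with $f_0(U')\subseteq f(U')$ and $f_0(U'\cup U'')=f_0(U')\cap f_0(U'')$ for all finite $U',U''\subseteq U$. It then remains only to observe that such an $f_0$ is automatically inclusion-reversing: if $U'\subseteq U''$, then writing $U''=U'\cup(U''\setminus U')$ and using multiplicativity gives $f_0(U'')=f_0(U')\cap f_0(U''\setminus U')\subseteq f_0(U')$. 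This is exactly the conclusion of the lemma, with countable incompleteness of $\sU$ carried along directly from the cited theorem.

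The only genuine obstacle is the existence assertion itself, namely Kunen's ZFC construction of countably incomplete good ultrafilters; once that is granted, the lemma is pure bookkeeping. I would therefore present the proof as essentially the two paragraphs above, delegating the bulk of the work to \cite{Kun} and \cite{CK}, and noting that the parameter $\lambda=|U|$ sits precisely at the boundary $\lambda<|U|^+$ at which goodness of $\sU$ is available---so that the full strength of ``good'', but nothing more, is what the argument requires.
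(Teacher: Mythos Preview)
Your proposal is correct and matches the paper's approach exactly: the paper does not prove this lemma at all but simply cites it from \cite[Theorem 6.1.4]{CK}, attributing the GCH version to Keisler \cite{Kei} and the unconditional ZFC proof to Kunen \cite[Theorem 3.2]{Kun}. Your observation that the inclusion-reversing requirement on $f_0$ is automatic from multiplicativity is also made in the paper, immediately after the statement of the lemma.
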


Of course, the requirement that $f_0$ be inclusion-reversing is superfluous: it is a special case of the requirement that $f_0$ transform finite unions into intersections.

\bsk

We now verify that the ultrapowers that result from the ultrafilters supplied by Lemma \ref{ultra-source} have the promised property of solvability of systems of equations.

\begin{PropositionT} \label{CK-lemma}
For an infinite cardinal $\kappa$, every ultrafilter $\sU$ supplied by Lemma {\upshape\ref{ultra-source}} for a set $U$ of cardinality $\kappa$ is such that\ucolon for any ring $R$ \up{resp.,~and any left $R$-module $M$}, any polynomial \up{resp.,~linear} system of equations
\[
\tst \{ g_i( \{X_\sigma\}_\sigma) = 0 \}_{i \in I} \qxq{{\upshape(}resp.,}  \{ \sum_{\sigma} r_{i,\, \sigma} X_{\sigma} = m_i \}_{i \in I}) \ \ \mbox{with} \ \ \# I \leq \kappa
\]
in variables $\{X_\sigma\}_\sigma$ and coefficients in $\prod_{\sU} R$ \up{resp.,~$r_{i,\, \sigma} \in \prod_{\sU }R$ and $m_i\in \prod_{\sU} M$} has a solution in $\prod_{\sU} R$ \up{resp.,~$\prod_{\sU} M$} as soon as so do all its finite subsystems.
\end{PropositionT}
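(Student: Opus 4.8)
The plan is to run the classical argument that a countably incomplete good ultrafilter makes its ultrapowers saturated, specialized to systems of polynomial (resp.\ linear) equations so that nothing beyond Lemma \ref{ultra-source} and the description of ultraproducts recalled in \ref{ultra-section}--\ref{ultra-val} is needed. I will carry out the polynomial case; the module case is word for word the same once one recalls that $\prod_{\sU} M$ is a $\prod_{\sU} R$-module under coordinatewise operations.

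First I would fix the data uniformly. Write $\kappa = \# U$ and fix an injection $\iota \colon I \hra U$ (available because $\# I \le \kappa$), together with, using the countable incompleteness of $\sU$, a decreasing chain $U = U_0 \supseteq U_1 \supseteq \cdots$ in $\sU$ with $\bigcap_n U_n = \emptyset$. For each of the finitely many coefficients of each $g_i$ I would choose a representative function on all of $U$, so that for every $u \in U$ there is an honest polynomial $g_i^{(u)}$ over $R$, still in the finitely many variables occurring in $g_i$. For each finite $J \subseteq I$ the hypothesis gives a solution $\bar x^{J} \in \prod_{\sU} R$ of $\{ g_i = 0 \}_{i \in J}$; after fixing representatives of its components, I let $W_J \in \sU$ be the set of $u$ at which $\bar x^{J,(u)}$ solves $\{ g_i^{(u)} = 0 \}_{i \in J}$ over $R$. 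This $W_J$ is a finite intersection of members of $\sU$ (each factor lies in $\sU$ because $g_i(\bar x^{J}) = 0$ in $\prod_{\sU} R$), hence $W_J \in \sU$; I also set $W_{\emptyset} = U$.

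Next I would feed a suitable monotone function to the goodness of $\sU$. Set $f(J) = \bigl( \bigcap_{J' \subseteq J} W_{J'} \bigr) \cap U_{\# J}$ for finite $J \subseteq I$: this lies in $\sU$ and is inclusion-reversing in $J$, since the intersection ranges over the downward closure of $J$ and the chain $U_n$ decreases. Transporting along $\iota$ via $\hat f(W) = f(\iota^{-1}(W))$ gives an inclusion-reversing, $\sU$-valued function on the finite subsets of $U$, so Lemma \ref{ultra-source} supplies an inclusion-reversing $\hat f_0$ with $\hat f_0(W) \subseteq \hat f(W)$ that turns finite unions into intersections; pulling back along $\iota$ yields $f_0$ on the finite subsets of $I$ with $f_0(J) \subseteq f(J)$ and $f_0(J_1 \cup J_2) = f_0(J_1) \cap f_0(J_2)$. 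For $u \in U$ I put $J(u) = \{ i \in I : u \in f_0(\{i\}) \}$. Multiplicativity of $f_0$ gives $u \in f_0(J)$ for every finite $J \subseteq J(u)$, and since $f_0(J) \subseteq f(J) \subseteq U_{\# J}$ while $\bigcap_n U_n = \emptyset$, the set $J(u)$ is forced to be \emph{finite}; then $u \in f_0(J(u)) \subseteq f(J(u)) \subseteq W_{J(u)}$, so $\bar x^{J(u),(u)}$ solves the subsystem indexed by $J(u)$.

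Finally I would assemble the solution: let $x_\sigma \in \prod_{\sU} R$ be the class of $u \mapsto x_\sigma^{J(u),(u)}$ (taking $0$ for the variables that do not occur in the $J(u)$-subsystem, and $\bar x^{(u)} = 0$ when $J(u) = \emptyset$). For a fixed $i \in I$, every $u \in f_0(\{i\})$ satisfies $i \in J(u)$, hence $g_i^{(u)}(\bar x^{(u)}) = 0$; since $f_0(\{i\}) \in \sU$, this shows $g_i(\bar x) = 0$ in $\prod_{\sU} R$, so $\bar x$ solves the whole system. I expect the one genuinely delicate point to be the choice of $f$: the interplay of the factor $U_{\# J}$ with the ``downward'' intersection $\bigcap_{J' \subseteq J} W_{J'}$ is precisely what simultaneously keeps $f$ inclusion-reversing and $\sU$-valued and forces each $J(u)$ to be finite, which is what allows Lemma \ref{ultra-source} to be both applied and then exploited; everything after that is routine bookkeeping.
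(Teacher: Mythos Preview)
Your argument is correct and follows essentially the same route as the paper's proof: both use countable incompleteness to fix a decreasing chain $U_n$ with empty intersection, define a monotone $f$ incorporating the factor $U_{\#J}$, apply the goodness of $\sU$ (Lemma~\ref{ultra-source}) to obtain a multiplicative refinement $f_0$, deduce that each $J(u)$ is finite, and then assemble the solution coordinatewise. The only cosmetic difference is that the paper defines $f(I')$ via \emph{solvability} of the subsystem at $u$ (which is automatically inclusion-reversing) and selects a solution for each $u$ only after $I_u$ is determined, whereas you pre-fix a solution $\bar x^{J}$ for every finite $J$ and therefore need the downward intersection $\bigcap_{J'\subseteq J} W_{J'}$ to force monotonicity; this is harmless extra bookkeeping and not a different idea.
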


\begin{proof}
The assertion is a concrete case of the model-theoretic \cite[Theorem 6.1.8]{CK}, and the latter is sharper in multiple aspects. For convenience, we recall the argument. 

For brevity, we denote the system in question by $\{g_i = 0\}_{i \in I}$ and we lift it to a system $\{\wt{g}_i = 0\}_{i \in I}$ with coefficients in $\prod_{u \in U} R$ (resp.,~and in $\prod_{u \in U} M$) and the same variables $\{X_\sigma\}_{\sigma}$ by lifting the nonzero coefficients along the surjection 
\[
\tst \prod_{u \in U} R \surjects \prod_{\sU} R \qxq{\upshape{(}resp.,~and along} \prod_{u \in U} M \surjects \prod_{\sU} M \qx{for the $m_i$}).
\]
Since $\sU$ is countably incomplete, we may fix a decreasing sequence 
\[
\tst  U \supset U_0 \supset U_1 \supset U_2 \supset \cdots \qxq{of sets in $\sU$ with}  \bigcap_{n \geq 0} U_n = \emptyset.
\]
We then define a function
\[ 
f \colon \{\mbox{finite subsets of $I$}\} \to {\sU} 
 \]
by letting $\mathrm{pr}_u$ denote the projection onto the $u$-th factor of $\prod_{u \in U}$ and setting
\[
 f(I') := U_{\# I'} \cap \{ u \in U\, \vert\, \mbox{the system } \{ \mathrm{pr}_u(\wt{g}_i) = 0 \}_{i \in I'} \mbox{ is solvable in $R$ (resp.,~$M$)} \}.
 \]
The well-definedness of $f$ follows from the solvability of the subsystem $\{g_i = 0\}_{i \in I'}$ in $\prod_{\sU} R$ (resp.,~$\prod_{\sU} M$) and from the stability of $\sU$ under supersets. By construction, $f(I') \supset f(I'')$ whenever $I' \subset I''$, so, since $\#I \le \# U$, Lemma \ref{ultra-source} supplies a function
\[ 
f_0 \colon \{\mbox{finite subsets of $I$}\} \to {\sU} \ \ \mbox{such that} \ \ f_0(I') \subset f(I'),  \ f_0(I' \cup I'') = f_0(I') \cap f_0(I'')
 \]
for all finite subsets $I', I'' \subset I$ (technically, to apply Lemma \ref{ultra-source} we first embed $I$ into $U$ as a subset and then extend $f$ to finite subsets $U' \subset U$ by the rule $U' \mapsto f(U' \cap I)$).

For each $u \in U$, we set
\[
I_u := \{ i \in I \, \vert\, u \in f_0(\{ i \}) \}.
\]
Whenever, $i_1, \dots, i_n \in I_u$ are pairwise distinct, we have
\[
u \in f_0(\{i_1\}) \cap \dots \cap f_0(\{i_n\}) = f_0(\{i_1, \dots, i_n\}) \subset f(\{i_1, \dots, i_n\}) \subset U_n,
\]
so, since the $U_n$ have empty intersection, each $I_u$ is finite. Then the preceding display applied to an enumeration of $I_u$ shows that $u \in f(I_u)$, to the effect that the system 
$
\{ \mathrm{pr}_u(\wt{g}_i) = 0 \}_{i \in I_u} \ \ \mbox{has a solution} \ \ \{x_{\sigma,\, u} \}_\sigma \ \ \mbox{in $R$ (resp.,~$M$)}.$

We claim that $\{ x_\sigma := (x_{\sigma,\, u})_{u \in U} \}_\sigma$ gives a solution in $\prod_{\sU} R$ (resp.,~$\prod_{\sU} M$) to the system $\{ g_i = 0\}_{i \in I}$. Indeed, for every $i \in I$ we have $f_0(\{ i \}) \in {\sU}$ and for every $u \in f_0(\{i \})$ we have $i \in I_u$, so $\wt{g}_i(\{ x_\sigma\}_\sigma) = 0$ in the projection on $\prod_{u \in f_0(\{i \})}$.
\hfill\ \end{proof}

The argument is not specific to rings or modules, and it also shows the following.

\begin{variantT} \label{monoid-var}
For an infinite cardinal $\kappa$, every ultrafilter $\sU$ supplied by Lemma {\upshape\ref{ultra-source}} for a set $U$ of cardinality $\kappa$ is such that\ucolon for any monoid $G$, any system 
\[
\{ g_i(\{X_\sigma\}_\sigma) = g_i'(\{X_\sigma\}_\sigma)\}_{i \in I} \qxq{with} \# I \le \kappa
\]
of monomial equations in variables $\{X_\sigma\}_\sigma$ and coefficients in $\prod_\sU G$ has a solution in $\prod_\sU G$ as soon as so does each of its finite subsystems. 
\end{variantT}

As we now review, Proposition \ref{CK-lemma} supplies algebraically compact ultrapowers.

\bbppt[Algebraic compactness] \label{alg-comp}
We fix a ring $R$ and recall that a map $M \to M'$ of left $R$-modules is \emph{pure} if the map $M'' \tensor_R M \to M'' \tensor_R M'$ is injective for every right $R$-module $M''$. An $R$-module $M$ is \emph{algebraically compact} (or \emph{pure-injective}) if every pure map $M \to M'$ of $R$-modules is a split injection. For example, if $M$ is an algebraically compact abelian group (so $R ={ \bf Z}$), then every short exact sequence 
\[
0 \to M \to M' \to M'/M \to 0 \qxq{of abelian groups with}  (M'/M)_{\mathrm{tors}} = 0 \qx{splits.}
\]
A concrete criterion for algebraic compactness is given by \cite[7.1 (with 6.5)]{JL}: a left $R$-module  $M$ is algebraically compact if every system of equations 
\[
\tst \{\sum_{\sigma} r_{i,\, \sigma} X_{\sigma} = m_i\}_{i \in I} \qxq{with} r_{i,\, \sigma} \in R \qxq{and} m_i \in M
\]
has a solution in $M$ as soon as so do all its finite subsystems. Moreover, by \cite[7.28, 7.29]{JL}, it suffices to consider systems with cardinality $\# I \le \max(\#R, \#{\bf Z})$. In particular, thanks to Proposition \ref{CK-lemma}, there is an ultrafilter $\sU$ such that for any $R$-module $M$, the $R$-module $\prod_{\sU} M$ is algebraically compact.
\eeppt
\ssk

With model-theoretic input in place, we turn to the tower of ultrapowers argument in Theorem \ref{big-tower}. The final input is the following lemma proved in \cite[2.2]{Ell}, \cite[4.6.1]{Po1}, or \cite[6.1.30]{GR} that captures the ``combinatorial'' part of local uniformization.

\begin{LemmaT} \label{EPGR-lem-re}
For a totally ordered abelian group $\Gamma$, the submonoid $\Gamma_{\ge 0} \subset \Gamma$ of nonnegative elements is a filtered increasing union of its finite free submonoids isomorphic to ${\bf Z}_{\ge 0}^r$ \up{where $r \in {\bf Z}_{\ge 0}$ need not be constant}. 
\end{LemmaT}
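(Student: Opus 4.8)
The plan is to reduce the assertion to a combinatorial statement about finitely generated totally ordered abelian groups and to prove that by induction on the rank. First I would note that the family of \emph{all} finite free submonoids of $\Gamma_{\geq 0}$ has union $\Gamma_{\geq 0}$ (each $a\in\Gamma_{\geq 0}$ lies in ${\bf Z}_{\geq 0}\cdot a\cong{\bf Z}_{\geq 0}$, and $0$ lies in the trivial monoid) and is directed by inclusion --- hence is the required filtered increasing union --- provided one knows the statement $(\ast)$: \emph{every finite subset $S\subseteq\Gamma_{\geq 0}$ is contained in some finite free submonoid of $\Gamma_{\geq 0}$} (given two such submonoids, apply $(\ast)$ to the union of their generating sets to produce a common overmonoid). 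To prove $(\ast)$ I would replace $\Gamma$ by $G:=\langle S\rangle$, which is finitely generated and torsion free, hence free of some finite rank $r$, and carries the induced total order; then $(\ast)$ follows from the \emph{core claim}: for a totally ordered abelian group $G\cong{\bf Z}^{r}$ and elements $v_1,\dots,v_n\in G_{\geq 0}$, there is a ${\bf Z}$-basis $g_1,\dots,g_r$ of $G$ consisting of positive elements such that each $v_j$ lies in ${\bf Z}_{\geq 0}g_1+\dots+{\bf Z}_{\geq 0}g_r$ --- this monoid being then free of rank $r$ and contained in $G_{\geq 0}\subseteq\Gamma_{\geq 0}$.

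I would prove the core claim by induction on $r$. The cases $r\leq 1$ are immediate ($r=0$ is vacuous, and for $r=1$ we have $G={\bf Z}g$ with $g>0$, so every nonnegative element is a nonnegative multiple of $g$). For $r\geq 2$ I would split according to whether $G$ has a proper nonzero convex subgroup. \emph{If it does}, let $G_0$ be the smallest nonzero convex subgroup; by H\"older's theorem $G_0$ is archimedean, it is free of some rank $k$ with $1\leq k\leq r-1$, the quotient $G/G_0$ is again a totally ordered free group, of rank $r-k$, and since $G/G_0$ is free we may write $G=G_0\oplus G_1$ with $G_1\to G/G_0$ an isomorphism of ordered groups. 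The inductive hypothesis applied to $G/G_0$ and the images $\bar v_j$ supplies a positive basis $g_1,\dots,g_{r-k}$ of $G_1$ and $c_{ji}\in{\bf Z}_{\geq 0}$ with $\bar v_j=\sum_i c_{ji}\bar g_i$; writing $v_j=v_j^{(0)}+\sum_i c_{ji}g_i$ with $v_j^{(0)}\in G_0$, and then replacing each $g_i$ by $g_i-M$ for a sufficiently large positive $M\in G_0$, the $G_0$-component of $v_j$ becomes $v_j^{(0)}+M\sum_i c_{ji}$, which is $\geq 0$ for all $j$ (it equals $v_j$ when $\sum_i c_{ji}=0$, and grows without bound with $M$ otherwise, $G_0$ being cofinal). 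A second use of the inductive hypothesis, applied to $G_0$ and these nonnegative $G_0$-components, yields a positive basis $e_1,\dots,e_k$ of $G_0$ expressing them in $\sum_l{\bf Z}_{\geq 0}e_l$; then $e_1,\dots,e_k,g_1-M,\dots,g_{r-k}-M$ is a positive ${\bf Z}$-basis of $G=G_0\oplus G_1$ in whose positive orthant every $v_j$ lies.

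The remaining case --- $G$ archimedean of rank $r\geq 2$ --- is the one I expect to be the main obstacle, since there is no convex subgroup left to induct on. Here I would fix an order-embedding $G\hookrightarrow({\bf R},+)$ (again H\"older), enlarge $\{v_j\}$ by the absolute values of a ${\bf Z}$-basis so that it generates $G$, remove repetitions and order the elements as $0<v_1<\dots<v_n$, and run Perron's algorithm: replace each $v_j$ with $j\geq 2$ by its remainder $v_j-\lfloor v_j/v_1\rfloor v_1\in[0,v_1)$, discard the zero remainders, and iterate. Each old $v_j$ is a nonnegative combination of $v_1$ and its remainder, so it suffices to treat the new, \emph{smaller}, set. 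The delicate point is that this process terminates in a ${\bf Z}$-basis of $G$: the elements produced shrink at each step, but a totally ordered group of rank $\geq 2$ need not be well-ordered, so termination is not a formal descent --- it is the classical combinatorial heart of local uniformization, going back to Zariski \cite{Zar}, and I would invoke it in the form of \cite[2.2]{Ell} (or equivalently \cite[4.6.1]{Po1}, \cite[6.1.30]{GR}). The resulting basis is positive and expresses every $v_j$ in its positive orthant, completing the induction and hence the proof of $(\ast)$ and of the lemma.
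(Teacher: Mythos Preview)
The paper does not give a proof of this lemma at all: both here and at its earlier appearance as Lemma~\ref{EPGR-lem} it simply refers to \cite[2.2]{Ell}, \cite[4.6.1]{Po1}, \cite[6.1.30]{GR}. So there is no ``paper's own proof'' to compare against beyond that citation.

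Your reduction to the core claim and the inductive treatment of the non-archimedean case via the smallest nonzero convex subgroup are correct and are genuine added content over the paper's bare citation. (The order-compatibility of the splitting $G=G_0\oplus G_1$, the positivity of $g_i-M$, and the cofinality needed to make the $G_0$-components nonnegative all check out once one uses that $G_0$ is convex and saturated.)

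Where your proposal is weaker is the archimedean case. The algorithm you describe---repeatedly replacing $v_j$ by $v_j\bmod v_1$---is literally the continued-fraction algorithm when $r=2$ and $n=2$, and it does \emph{not} terminate for irrational ratios; your sentence ``this process terminates in a ${\bf Z}$-basis of $G$'' is therefore not correct as stated. You then salvage the step by citing \cite[2.2]{Ell}, \cite[4.6.1]{Po1}, \cite[6.1.30]{GR} for the ``termination'', but those are exactly the references that prove the full lemma, so at that point your argument becomes circular in spirit: you are invoking the lemma itself for its hardest special case. If you want an independent argument in the archimedean case, you should either follow Zariski's original Perron-type argument in \cite{Zar} (which is about changing the \emph{basis} rather than reducing the $v_j$'s until the process stops) or give a direct geometric argument using convergents of the irrational slope defining $G_{\ge 0}\subset{\bf Z}^r$. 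As written, the net effect of your proposal is the same as the paper's: the archimedean core is outsourced to the literature.
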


\begin{TheoremT} \label{big-tower}
For a valuation ring $V$ with value group $\Gamma$, there is a countable sequence of ultrafilters ${\sU}_1, {\sU}_2, \dots$ on some respective sets $U_1, U_2, \dots$ for which the valuation rings $\{V_n\}_{n \geq 0}$ defined inductively by $V_0 := V$ and $V_{n + 1} := \prod_{{\sU}_{n + 1}} V_{n}$ %for $n \geq 1$ 
are such that the valuation ring
\[
\tst \wt{V} := \varinjlim_{n \geq 0} V_n \qxq{has a cross-section}  \wt{s} : \wt{\Gamma} \to \wt{K}^*,
\]
where $\wt{K}$ and $\wt{\Gamma}$ are the fraction field and the value group  of $\wt{V}$.
\end{TheoremT}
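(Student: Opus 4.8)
The plan is to build the tower of ultrapowers one level at a time, using Proposition \ref{CK-lemma} and Variant \ref{monoid-var} to obtain partial cross-sections, and the algebraic compactness supplied by \S\ref{alg-comp} to patch these partial cross-sections together coherently as $n$ grows. First I would fix, at each stage $n \geq 0$, the valuation ring $V_n$ with value group $\Gamma_n$ and fraction field $K_n$; by \ref{ultra-val}(3) the transition maps $V_n \to V_{n+1}$ induce the diagonal inclusions $\Gamma_n \hookrightarrow \Gamma_{n+1} = \prod_{{\sU}_{n+1}} \Gamma_n$ and $K_n^* \hookrightarrow K_{n+1}^*$, compatibly with $\val$. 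Passing to the colimit, $\wt\Gamma = \varinjlim \Gamma_n$ and $\wt K = \varinjlim K_n$, and since a cross-section is an object that can be assembled from compatible partial data, it suffices to produce, for each $n$, a partial cross-section $s_n \colon \Gamma_n \to K_n^*$ such that $s_{n+1}$ restricts to $s_n$ on $\Gamma_n \subset \Gamma_{n+1}$.

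The key step is the inductive construction. Given $s_n \colon \Gamma_n \to K_n^*$, I want to extend it to $s_{n+1} \colon \Gamma_{n+1} \to K_{n+1}^*$. Here I would use Lemma \ref{EPGR-lem-re} to write $(\Gamma_{n+1})_{\geq 0}$ as a filtered increasing union of finite free submonoids $M_\lambda \simeq {\bf Z}_{\geq 0}^{r_\lambda}$; a partial cross-section on each $M_\lambda$ is just a choice of $r_\lambda$ elements of $K_{n+1}^*$ with prescribed valuations (an existence statement for a finite system, trivially solvable), and the coherence of these choices across the union $\bigcup_\lambda M_\lambda$, together with the requirement that they agree with $s_n$ on $\Gamma_n$, amounts to a (possibly large) system of monomial equations over the monoid $K_{n+1}^*$, every finite subsystem of which is solvable. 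To be able to solve the whole system I must choose ${\sU}_{n+1}$ to be an ultrafilter as in Lemma \ref{ultra-source} for a set $U_{n+1}$ whose cardinality is at least that of $\Gamma_n$ (hence at least the number of equations); then Variant \ref{monoid-var} applies and yields the desired $s_{n+1}$ extending $s_n$. Actually, to make the extension step go through cleanly it is cleaner to first split the inclusion $\Gamma_n \hookrightarrow \Gamma_{n+1}$ as abelian groups — this is where algebraic compactness of $\prod_{{\sU}_{n+1}}$ enters via \S\ref{alg-comp} (applied with $R = {\bf Z}$, so that the pure injection $\Gamma_n \hookrightarrow \Gamma_{n+1}$ into the algebraically compact group $\Gamma_{n+1}$ splits), reducing the problem to defining $s_{n+1}$ on a complement of $\Gamma_n$, which is the purely transcendental-type situation handled by the monomial system above.

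The main obstacle, and the reason the tower must be infinite, is that a single ultrapower does not finish the job: producing $s_n$ on $\Gamma_n$ forces one to enlarge the value group to $\Gamma_{n+1}$, on which $s_n$ need not extend without a further ultrapower; so one never stabilizes at a finite stage. The resolution is precisely the colimit: in $\wt\Gamma = \varinjlim \Gamma_n$ every element already lies in some $\Gamma_n$, on which $s_n$ is defined, and the compatibility $s_{n+1}|_{\Gamma_n} = s_n$ guarantees that $\wt s := \varinjlim s_n$ is a well-defined group section of $\val \colon \wt K^* \to \wt\Gamma$. The delicate bookkeeping is to arrange the cardinalities of the $U_{n+1}$ correctly — each ${\sU}_{n+1}$ must handle systems indexed by (roughly) $\#\Gamma_n$ many equations, and $\#\Gamma_{n+1}$ may be strictly larger than $\#\Gamma_n$ — but since we only iterate countably many times and at each finite stage we are free to pick $U_{n+1}$ as large as needed, this presents no real difficulty within ZFC. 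I expect the verification that the splittings and partial cross-sections chosen at successive levels are mutually compatible (so that the colimit $\wt s$ is genuinely a homomorphism) to require the most care.
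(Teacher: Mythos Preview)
Your overall strategy matches the paper's, but there are two genuine gaps that make the argument as written not go through.

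First, your indexing creates a cardinality circularity. You aim to construct $s_{n+1}\colon \Gamma_{n+1}\to K_{n+1}^*$ by solving a system of monomial equations in $K_{n+1}^* = \prod_{\sU_{n+1}} K_n^*$ indexed by $(\Gamma_{n+1})_{\ge 0}$. But $\#\Gamma_{n+1}$ is typically of size $(\#\Gamma_n)^{\#U_{n+1}} \ge 2^{\#U_{n+1}} > \#U_{n+1}$, so Variant~\ref{monoid-var} (which requires $\#I \le \#U_{n+1}$) does not apply; your parenthetical ``hence at least the number of equations'' is simply false. The paper avoids this by shifting the target up one level: it builds partial cross-sections $s_n\colon \Gamma_n \to K_{n+1}^*$, so that the system to be solved in $\prod_{\sU_{n+1}} K_n$ is indexed by $\Gamma_n$, whose cardinality is known \emph{before} $\sU_{n+1}$ is chosen. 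The colimit $\wt s$ is then assembled from maps $\Gamma_n \to K_{n+1}^* \subset \wt K^*$, which is just as good.

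Second, you have the direction of algebraic compactness reversed. Pure-injectivity of $M$ says that pure injections \emph{out of} $M$ split, not injections \emph{into} $M$ (see \S\ref{alg-comp}). So knowing that $\Gamma_{n+1}$ is algebraically compact does nothing for splitting $\Gamma_n \hookrightarrow \Gamma_{n+1}$; it is $\Gamma_n$ that must be algebraically compact. For $n\ge 1$ this happens to hold since $\Gamma_n$ is itself a well-chosen ultrapower, but for $n=0$ it need not, and your argument omits the paper's crucial preliminary step: replace $V$ by a suitable ultrapower at the outset so that already $\Gamma_0$ is algebraically compact, and then maintain this as an inductive invariant. With these two corrections---the index shift $s_n\colon \Gamma_n\to K_{n+1}^*$ and the initial ultrapower to make $\Gamma_0$ algebraically compact---your outline becomes exactly the paper's proof.
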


\begin{proof}
We let $K_n$ and $\Gamma_n$ denote the fraction field and the value group of $V_n$, so that $\Gamma_{n + 1} \cong \prod_{{\sU}_{n + 1}} \Gamma_{n}$ and $K_{n + 1} = \prod_{{\sU}_{n + 1}} K_{n }$ (see \S\ref{ultra-val}) with
\[
\tst \wt{\Gamma} \cong \varinjlim_{n \geq 0} \Gamma_n \ \ \mbox{and} \ \ \wt{K} \cong \varinjlim_{n \geq 0} K_n.
\]
The idea is to build ultrafilters ${\sU}_n$ one by one using Lemma \ref{ultra-source} in such a way that a desired cross-section
\[
\tst \wt{s} \colon \wt{\Gamma} \to \wt{K}^* \ \ \mbox{would be the limit of compatible partial cross-sections} \ \ s_n \colon \Gamma_n \to K_{n + 1}^*.
\]
For this, as an initial step, we replace $V$ by a suitable ultrapower to ensure that the abelian group $\Gamma$ is algebraically compact (see \S\ref{alg-comp}). %---in the end, we may use Remark \ref{prod-rem} to absorb this initial ultrapower into $V_1$, so this preliminary reduction will not spoil the indexing of the sequence $\{v_n\}_{n \geq 0}$.
Granted this, it suffices to carry out the inductive step: setting $\Gamma_{-1} := 0$ for convenience and assuming that we have already constructed $s_{n - 1}$ and $V_n$ for some $n \geq 0$ in such a way that the abelian groups $\Gamma_{n - 1}$ and $\Gamma_n$ are algebraically compact, it suffices to construct $V_{n + 1}$ with $\Gamma_{n + 1}$ algebraically compact in such a way that $s_{n - 1}$ extends to an $s_n$. 

The role of algebraic compactness is to split the map $\Gamma_{n - 1} \hra \Gamma_n \cong \prod_{{\sU}_n} \Gamma_{n - 1}$ whose cokernel is torsion free:
\[
\Gamma_n \cong \Gamma_{n - 1} \oplus G \ \ \mbox{for some subgroup} \ \ G \subset \Gamma_n.
\]
 Thanks to this splitting, we only need to build an ultrafilter ${\sU}_{n + 1}$ and a partial cross-section $s_G : G \to (\prod_{{\sU}_{n + 1}} K_n)^*$ such that $\prod_{{\sU}_{n + 1}} \Gamma_n$ is algebraically compact. In fact, we let ${\sU}_{n + 1}$ be any ultrafilter as in Lemma \ref{ultra-source} applied to the cardinal $\max(\#\Gamma_n, \#{\bf Z})$. Then $\prod_{{\sU}_{n + 1}} \Gamma_n$ is necessarily algebraically compact by the criterion reviewed in \S\ref{alg-comp} and Proposition \ref{CK-lemma}.

The subgroup $G$ inherits a total order from $\Gamma_n$, and any partial cross-section 
\[
\tst  s_{G_{\geq 0}} : G_{\geq 0} \to (\prod_{{\sU}_{n + 1}} V_n) \setminus \{ 0 \}  \qxq{will give rise to a desired} s_G.
\]
For each $g \in G_{> 0}$, we fix a $v_g \in V_n$ with $\val(v_g) = g$. Then $s_{G_{\geq 0}}$ amounts to a solution in $\prod_{{\sU}_{n + 1}} V_n$ of the following system of equations in variables $\{X_g, U_g, U'_g\}_{g \in G_{> 0}}$:
\[
\tst  \{X_{g + g'} = X_g X_{g'}, \ \ X_gU_g = v_g, \ \ U_gU'_g = 1\}_{g,\, g' \in G_{> 0}}.
\]
Likewise, for any submonoid $G' \subset G_{\geq 0}$, the restriction of $s_{G_{\geq 0}}|_{G'}$, that is, a partial cross-section defined on $G'$, amounts to a solution in $\prod_{{\sU}_{n + 1}} V_n$ of the subsystem consisting of those equations that only involve the variables $\{ X_g, U_g, U'_g\}_{g \in G'}$. However, a partial cross-section $G' \to \prod_{{\sU}_{n + 1}} V_n$ (and even $G' \to V_n$) certainly exists if $G' \simeq {\bf Z}_{\geq 0}^d$, and, by Lemma \ref{EPGR-lem-re}, the monoid $G_{\geq 0}$ is a filtered increasing union of such $G'$. This implies that every finite subsystem of the above system  has a solution in $\prod_{{\sU}_{n + 1}} V_n$ (and even in $V_n$). Then, by Proposition \ref{CK-lemma}, the entire system  has a solution in $\prod_{{\sU}_{n + 1}} V_n$, which completes the inductive step.
\hfill\ \end{proof}

\begin{variantT} \label{variant}
For every faithfully flat map $V \subset V'$ of valuation rings  with value groups $\Gamma \subset \Gamma'$ such that $\Gamma'/\Gamma$ is torsion free, there is a countable sequence of ultrafilters ${\sU}_1, {\sU}_2, \dots$ on some respective sets $U_1, U_2, \dots$ for which the valuation rings $\{V_n\}_{n \geq 0}$ and $\{V'_n\}_{n \geq 0}$ defined inductively by $V_0 := V$ and $V'_0 := V'$ with $V_{n + 1} := \prod_{{\sU}_{n + 1}} V_{n}$ and $V'_{n + 1} := \prod_{{\sU}_{n + 1}} V'_{n}$ are such that the valuation ring 
\[
\tst  \wt{V}' = \varinjlim_{n \geq 0} V'_n \qxq{with} \wt{K}' := \Frac(\wt{V}') \qxq{has a cross-section}  \wt{s} :\wt{\Gamma}' \to \wt{K}'^*
\]
whose restriction to the value group $\wt{\Gamma}$ of $\wt{V} := \varinjlim_{n \geq 0} V_n$ lands in $\wt{K} := \Frac(\wt{V})$. 
\end{variantT}

\begin{proof}
An ultrapower of an ultrapower is itself an ultrapower \cite[6.5.2]{CK}, so we may
make an initial replacement of $V$ and $V'$ by suitable large ultrapowers and use §A.8
with Proposition A.6 to ensure that $\Gamma$  is algebraically compact and later absorb
the appearing initial ultrafilter into $\mathcal{U}_1$ (alternatively, we could simply insert this
initial ultrafilter as  $\mathcal{U}_1$  without using loc. cit.). Then, thanks to the torsion-freeness
assumption on $\Gamma'/\Gamma$, the inclusion $\Gamma\subset \Gamma'$ splits. A choice of a splitting induces a
compatible splitting on any ultrapower, so the proof of Theorem A.10 continues to
give the claimed variant granted that we take advantage of the splitting to build the
cross-section in such a way that its restriction to $\tilde \Gamma$
lands in $\tilde K$.
\hfill\ \end{proof}

\end{document}